\theoremstyle{plain}
\newtheorem{theorem}{Theorem}[section]
\newtheorem*{theoremA}{Theorem~A}
\newtheorem*{theoremB}{Theorem~B}
\newtheorem*{corollaryC}{Corollary~C}
\newtheorem*{corollaryD}{Corollary~D}
\newtheorem{lemma}[theorem]{Lemma}
\newtheorem{proposition}[theorem]{Proposition}
\newtheorem{proposition-definition}[theorem]{Proposition-Definition}
\newtheorem{corollary}[theorem]{Corollary}
\newtheorem{claim}[theorem]{Claim}
\theoremstyle{definition}
\theoremstyle{remark}
\newtheorem{remark}[theorem]{Remark}
\newcommand{\NN}{\mathbb{N}}
\newcommand{\ZZ}{\mathbb{Z}}
\newcommand{\QQ}{\mathbb{Q}}
\newcommand{\RR}{\mathbb{R}}
\newcommand{\CC}{\mathbb{C}}
\newcommand{\KK}{\mathbb{K}}
\newcommand{\PP}{\mathbb{P}}
\newcommand{\esssup}{\mathop{\operator@font ess.sup}\displaylimits}
\newcommand{\essinf}{\mathop{\operator@font ess.inf}\displaylimits}
\let\@@pmod\pmod 
\DeclareRobustCommand{\pmod}{\@ifstar\@pmods\@@pmod} 
\def\@pmods#1{\mkern4mu({\operator@font mod}\mkern 6mu#1)} 
\newcommand{\Image}{\mathop{\mathrm{Image}}\nolimits}
\renewcommand{\leq}{\leqslant}
\renewcommand{\geq}{\geqslant}
\DeclareMathOperator{\Hz}{H^0}
\DeclareMathOperator{\Sym}{Sym}
\DeclareMathOperator{\Spec}{Spec}
\DeclareMathOperator{\Supp}{Supp}
\DeclareMathOperator{\Rat}{Rat}
\DeclareMathOperator{\Bs}{Bs}
\DeclareMathOperator{\Div}{Div}
\DeclareMathOperator{\mult}{mult}
\DeclareMathOperator{\aBigCone}{\widehat{Big}}
\DeclareMathOperator{\aNef}{\widehat{Nef}}
\DeclareMathOperator{\aDiv}{\widehat{Div}}
\DeclareMathOperator{\adeg}{\widehat{deg}}
\DeclareMathOperator{\vol}{vol}
\DeclareMathOperator{\avol}{\widehat{vol}}
\DeclareMathOperator{\rk}{rk}
\DeclareMathOperator{\sgn}{sgn}
\def\Largesymbol#1{\mbox{\strut\rlap{\smash{\Large$#1$}}\quad}}
\title{On the concavity of the arithmetic volumes}
\author{Hideaki Ikoma}
\date{August 14, 2014.}
\thanks{This research is supported by Research Fellow of Japan Society for the Promotion of Science.}
\address{Graduate School of Mathematical Sciences, The University of Tokyo, Tokyo, 153-8914, Japan}
\email{ikoma@ms.u-tokyo.ac.jp}
\subjclass{Primary 14G40; Secondary 11G50, 37P30}
\begin{document}
%\selectlanguage{english}

\begin{abstract}
In this paper, we study the differentiability of the arithmetic volumes along arithmetic $\RR$-divisors, and give some equality conditions for the Brunn-Minkowski inequality for arithmetic volumes over the cone of nef and big arithmetic $\RR$-divisors.
\end{abstract}

\maketitle
\tableofcontents

%%%
\section{Introduction}

Let $X$ be a normal projective arithmetic variety of dimension $d+1$, and denote the rational function field of $X$ by $\Rat(X)$.
Following Moriwaki \cite{Moriwaki12a}, we consider an arithmetic $\RR$-divisor $\overline{D}$ on $X$ (see \S 2 for definitions).
In this paper, we suppose that all arithmetic $\RR$-divisors are $\RR$-Cartier and of $C^0$-type.
The arithmetic volume of $\overline{D}$ is defined as
\[
 \avol(\overline{D}):=\limsup_{m\to\infty}\frac{\log\sharp\{s\in\Hz(X,mD)\,|\,\|s\|_{\sup}^{m\overline{D}}\leq 1\}}{m^{d+1}/(d+1)!},
\]
where $\|\cdot\|_{\sup}^{m\overline{D}}$ is the supremum norm on $\Hz(X,mD)\otimes_{\ZZ}\RR$ defined by the Green function of $m\overline{D}$.
In \cite{Chen11}, H. Chen proved that the function $\avol$ is differentiable at every big arithmetic divisor along the directions defined by arbitrary arithmetic divisors.
In this paper, we generalize this result to arithmetic $\RR$-divisors: that is, we prove that, for a big arithmetic $\RR$-divisor $\overline{D}$ and for an arithmetic $\RR$-divisor $\overline{E}$, the function $\RR\ni t\mapsto\avol(\overline{D}+t\overline{E})\in\RR$ is differentiable and
\[
 \lim_{t\to 0}\frac{\avol(\overline{D}+t\overline{E})-\avol(\overline{D})}{t}=(d+1)\langle\overline{D}^{\cdot d}\rangle\overline{E},
\]
where $\langle\overline{D}^{\cdot d}\rangle\overline{E}$ is the arithmetic positive intersection number defined in \S\ref{sec:aposint} (Theorem~\ref{thm:diffavol}).
A merit of such generalization is that we can obtain the following arithmetic version of the Discant inequality, which was proved by Discant \cite{Discant} in the context of convex geometry and by Boucksom-Favre-Jonsson \cite{Bou_Fav_Mat06} in the context of algebraic geometry.

\begin{theoremA}[Theorem~\ref{thm:discant}]
Let $\overline{D}$ and $\overline{P}$ be two big arithmetic $\RR$-divisors.
If $\overline{P}$ is nef, then we have
\[
 0\leq\left(\left(\langle\overline{D}^{\cdot d}\rangle\overline{P}\right)^{\frac{1}{d}}-s\avol(\overline{P})^{\frac{1}{d}}\right)^{d+1}\leq\left(\langle\overline{D}^{\cdot d}\rangle\overline{P}\right)^{1+\frac{1}{d}}-\avol(\overline{D})\avol(\overline{P})^{\frac{1}{d}},
\]
where $s=s(\overline{D},\overline{P}):=\sup\{t\in\RR\,|\,\text{$\overline{D}-t\overline{P}$ is pseudo-effective}\}$.
\end{theoremA}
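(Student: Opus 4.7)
The plan is to adapt the algebro-geometric strategy of \cite{Bou_Fav_Mat06} for the Discant inequality to the arithmetic setting, combining Theorem~\ref{thm:diffavol} with an arithmetic Brunn-Minkowski-type bound for positive intersection numbers. Write $f(t) := \avol(\overline{D}+t\overline{P})$, $a(t) := \langle(\overline{D}+t\overline{P})^{\cdot d}\rangle\overline{P}$, $v_D := \avol(\overline{D})$, and $v_P := \avol(\overline{P})$. By the definition of $s$ and the continuity of $\avol$ on the pseudo-effective cone, $f$ extends continuously to $[-s,\infty)$ with $f(-s)=0$; Theorem~\ref{thm:diffavol} gives $f'(t) = (d+1)\,a(t)$. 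Integrating therefore yields the identity
\[
v_D \;=\; (d+1)\int_{-s}^{0} a(\tau)\,d\tau,
\]
which will serve as the basic hinge of the argument.

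The key additional input I will need is the Brunn-Minkowski-type estimate
\[
a(t+\epsilon)^{1/d} \;\geq\; a(t)^{1/d} \,+\, \epsilon\,v_P^{1/d} \qquad (\epsilon \geq 0),
\]
equivalently, the monotonicity of $t \mapsto a(t)^{1/d} - v_P^{1/d}\,t$ on $(-s,\infty)$. Granting this, for $\tau \in [-s,0]$ one obtains the pointwise upper bound $a(\tau)^{1/d} \leq a^{1/d} + v_P^{1/d}\tau$ with $a := a(0)$; sending $\tau \to -s^{+}$ (where $a(-s)=0$) gives the inequality $a^{1/d} \geq s\,v_P^{1/d}$, which is the left-hand side of Theorem~A. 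Raising the pointwise bound to the $d$-th power, integrating over $[-s,0]$, and performing the substitution $u = a^{1/d}+v_P^{1/d}\tau$ yields
\[
v_D \;\leq\; (d+1)\!\int_{-s}^{0}\!\bigl(a^{1/d}+v_P^{1/d}\tau\bigr)^{d}\,d\tau \;=\; v_P^{-1/d}\Bigl[a^{(d+1)/d} - \bigl(a^{1/d}-s\,v_P^{1/d}\bigr)^{d+1}\Bigr].
\]
Multiplying through by $v_P^{1/d}$ and rearranging then gives the right-hand inequality of Theorem~A.

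The main obstacle will be proving the Brunn-Minkowski bound for $a(t)^{1/d}$. In the nef case, its differentiated form $(a^{1/d})'(t) \geq v_P^{1/d}$ is equivalent, after clearing denominators, to the arithmetic log-concavity estimate $(\overline{L}^{\cdot(d-1)}\overline{P}^{\cdot 2})^{d} \geq (\overline{L}^{\cdot d}\overline{P})^{d-1}\cdot\overline{P}^{\cdot(d+1)}$ applied to $\overline{L}=\overline{D}+t\overline{P}$, which iterates from the arithmetic Khovanskii-Teissier inequality for mixed intersection numbers. The general big case will require approximating $\overline{D}+t\overline{P}$ by nef big models on birational modifications via arithmetic Fujita approximation, and passing to the limit using the continuity and monotonicity properties of the positive intersection operator $\langle\cdot\rangle\overline{P}$; this parallels the positive-intersection machinery of \cite{Bou_Fav_Mat06} and will rely on the technical results established earlier in the paper.
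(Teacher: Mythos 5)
Your proposal follows essentially the same route as the paper: write $\avol(\overline{D}) = (d+1)\int_{0}^{s}\langle(\overline{D}-t\overline{P})^{\cdot d}\rangle\overline{P}\,dt$ via Theorem~\ref{thm:diffavol} and the continuity of $\avol$, bound the integrand pointwise by $\bigl((\langle\overline{D}^{\cdot d}\rangle\overline{P})^{1/d}-t\avol(\overline{P})^{1/d}\bigr)^d$ using the Brunn-Minkowski property of $\langle\cdot\rangle\overline{P}$, and evaluate the resulting elementary integral; your parametrization $\tau=-t$ is just a cosmetic change.

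Two small remarks. First, the Brunn-Minkowski estimate $a(t+\epsilon)^{1/d}\geq a(t)^{1/d}+\epsilon\,v_P^{1/d}$ that you flag as the ``main obstacle'' is exactly Proposition~\ref{prop:avollogconcave}~(4) (take $k=d$, $\overline{E}=\epsilon\overline{P}$, $\overline{D}_d=\overline{P}$, and use $\langle\overline{P}^{\cdot d}\rangle\overline{P}=\avol(\overline{P})$), which is already established in the paper for big arithmetic $\RR$-divisors; there is no need to re-derive it from a differential inequality in the nef case and then pass through arithmetic Fujita approximation. Second, the parenthetical claim $a(-s)=0$ is not justified: $\overline{D}-s\overline{P}$ is pseudo-effective but not big, so $\langle(\overline{D}-s\overline{P})^{\cdot d}\rangle\overline{P}$ is not even defined, and the one-sided limit $\lim_{\tau\to -s^+}a(\tau)$ need not vanish. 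Fortunately it is also unnecessary: since $a(\tau)\geq 0$ for $\tau\in(-s,0]$, the pointwise bound $a(\tau)^{1/d}\leq a^{1/d}+v_P^{1/d}\tau$ already forces $a^{1/d}+v_P^{1/d}\tau\geq 0$ on $(-s,0]$, and letting $\tau\to -s^+$ gives $(\langle\overline{D}^{\cdot d}\rangle\overline{P})^{1/d}\geq s\,\avol(\overline{P})^{1/d}$, which is both the left-hand inequality of the theorem and the non-negativity needed to raise the pointwise bound to the $d$-th power before integrating.
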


As was pointed out in \cite{Cutkosky13}, Theorem~A immediately gives explicit bounds for $s(\overline{D},\overline{P})$ (see also \cite[Problem~B]{TeissierBonn}) and a Bonnesen-type inequality in the arithmetic context (Corollary~\ref{cor:Teissier}).
In \cite{Yuan09}, X. Yuan proved that the arithmetic volumes fit in the Brunn-Minkowski-type inequality:
\[
 \avol(\overline{D}+\overline{E})^{\frac{1}{d+1}}\geq\avol(\overline{D})^{\frac{1}{d+1}}+\avol(\overline{E})^{\frac{1}{d+1}}
\]
for all pseudo-effective arithmetic $\RR$-divisors $\overline{D}$ and $\overline{E}$ (the continuity property of the arithmetic volume function is due to Moriwaki \cite{Moriwaki12a}).
A main purpose of this paper is to obtain equality conditions for the Brunn-Minkowski inequality over the cone of nef and big arithmetic $\RR$-divisors.

\begin{theoremB}[Theorem~\ref{thm:strictconcave}]
Let $\overline{D}$ and $\overline{E}$ be two nef and big arithmetic $\RR$-divisors.
Then the following are all equivalent.
\begin{enumerate}
\item[\textup{(1)}] $\avol(\overline{D}+\overline{E})^{\frac{1}{d+1}}=\avol(\overline{D})^{\frac{1}{d+1}}+\avol(\overline{E})^{\frac{1}{d+1}}$.
\item[\textup{(2)}] For any $i$ with $1\leq i\leq d$, we have $\adeg(\overline{D}^{\cdot i}\cdot\overline{E}^{\cdot (d-i+1)})=\avol(\overline{D})^{\frac{i}{d+1}}\cdot\avol(\overline{E})^{\frac{d-i+1}{d+1}}$.
\item[\textup{(3)}] $\adeg(\overline{D}^{\cdot d}\cdot\overline{E})=\avol(\overline{D})^{\frac{d}{d+1}}\cdot\avol(\overline{E})^{\frac{1}{d+1}}$.
\item[\textup{(4)}] There exist $\phi_1,\dots,\phi_l\in\Rat(X)^{\times}$ and $a_1,\dots,a_l\in\RR$ such that
\[
 \frac{\overline{D}}{\avol(\overline{D})^{\frac{1}{d+1}}}-\frac{\overline{E}}{\avol(\overline{E})^{\frac{1}{d+1}}}=a_1\widehat{(\phi_1)}+\dots+a_l\widehat{(\phi_l)}.
\]
\end{enumerate}
\end{theoremB}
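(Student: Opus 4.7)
The plan is to prove the equivalences by establishing $(4)\Rightarrow(2)\Rightarrow(3)\Rightarrow(4)$ together with $(1)\Leftrightarrow(2)$, with the first two directions being elementary, the Khovanskii--Teissier chain handling $(1)\Leftrightarrow(2)\Leftrightarrow(3)$, and the rigidity $(3)\Rightarrow(4)$ being the main obstacle.

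For $(4)\Rightarrow(2)$, set $c:=\avol(\overline{D})^{1/(d+1)}$ and $c':=\avol(\overline{E})^{1/(d+1)}$. Since arithmetic intersections vanish against principal divisors, the hypothesis $\overline{D}/c-\overline{E}/c'=\sum a_j\widehat{(\phi_j)}$ collapses every mixed intersection to $\adeg((\overline{E}/c')^{\cdot(d+1)})=\avol(\overline{E}/c')=1$; rescaling by $c^i(c')^{d+1-i}$ yields (2), and taking $i=d$ gives (3).

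The equivalences $(1)\Leftrightarrow(2)\Leftrightarrow(3)$ rest on the arithmetic Khovanskii--Teissier inequality
\[
\adeg(\overline{D}^{\cdot i}\cdot\overline{E}^{\cdot(d+1-i)})\geq\avol(\overline{D})^{i/(d+1)}\avol(\overline{E})^{(d+1-i)/(d+1)}.
\]
Combined with the binomial expansion $\avol(\overline{D}+\overline{E})=\adeg((\overline{D}+\overline{E})^{\cdot(d+1)})$, this yields $(1)\Leftrightarrow(2)$, since equality in Brunn--Minkowski forces equality in every K--T inequality term by term. For $(3)\Rightarrow(2)$, observe that K--T is equivalent to log-concavity of the sequence $a_i:=\adeg(\overline{D}^{\cdot i}\cdot\overline{E}^{\cdot(d+1-i)})$ with endpoints $a_0=\avol(\overline{E})$ and $a_{d+1}=\avol(\overline{D})$; equality at the interior index $i=d$ forces the concave function $i\mapsto\log a_i$ to coincide with its chord throughout $\{0,1,\dots,d+1\}$, hence (2).

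The decisive step is $(3)\Rightarrow(4)$, to be carried out by feeding the equalities already established into the arithmetic Diskant inequality (Theorem~A) in two ways. With $\overline{P}=\overline{E}$, substituting (3) makes the upper bound $(\langle\overline{D}^{\cdot d}\rangle\overline{E})^{1+1/d}-\avol(\overline{D})\avol(\overline{E})^{1/d}$ vanish, which forces $s(\overline{D},\overline{E})=c/c'$, so $c'\overline{D}-c\overline{E}$ is pseudo-effective. Applying Theorem~A with the roles of $\overline{D}$ and $\overline{E}$ exchanged---using the symmetric equality $\adeg(\overline{D}\cdot\overline{E}^{\cdot d})=c\,(c')^d$ already extracted above---shows that $c\overline{E}-c'\overline{D}$ is also pseudo-effective. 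Thus $\overline{F}:=c'\overline{D}-c\overline{E}$ and $-\overline{F}$ both lie in the arithmetic pseudo-effective cone. The hardest part, and the main obstacle, is the final rigidity input: an arithmetic $\RR$-divisor with $\pm\overline{F}$ pseudo-effective must be a finite $\RR$-linear combination of principal divisors $\widehat{(\phi_j)}$, a structural statement about the arithmetic pseudo-effective cone that should come from Moriwaki's work. Granting it, writing $\overline{F}=\sum a_j\widehat{(\phi_j)}$ and dividing by $cc'$ gives (4).
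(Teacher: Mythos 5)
Your proposal is correct and takes essentially the same approach as the paper: the $(1)\Leftrightarrow(2)\Leftrightarrow(3)$ loop via Khovanskii--Teissier log-concavity, and the rigidity step via the arithmetic Diskant inequality applied in both directions to conclude that $\pm(c'\overline{D}-c\overline{E})$ is pseudo-effective. The ``structural statement about the arithmetic pseudo-effective cone'' that you flag as the main obstacle is precisely the paper's Theorem~\ref{thm:psef}~(2), which (as you guessed) is proved from Moriwaki's generalized Dirichlet unit theorem; the paper reaches the $\phi\in\Rat(X)^{\times}\otimes_{\ZZ}\RR$ via the observation that $\pm\overline{F}$ pseudo-effective forces $\adeg(\varphi^*\overline{F}\cdot\overline{H}^{\cdot d})=0$ against any ample $\overline{H}$ on a blow-up, and then invokes that theorem.
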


To prove Theorem~B, the generalized Dirichlet unit theorem of Moriwaki \cite{Moriwaki10b} plays an essential role (Theorem~\ref{thm:psef}).
As applications, we give some characterizations of the Zariski decompositions over high dimensional arithmetic varieties. 
The following were proved by Moriwaki \cite{Moriwaki12b} when $\dim X$ is two, and used to characterize the Zariski decompositions over arithmetic surfaces in terms of the arithmetic volumes.

\begin{corollaryC}[Corollary~\ref{cor:equalnefbig}]
Let $\overline{P}$ and $\overline{Q}$ be two nef and big arithmetic $\RR$-divisors.
If $\avol(\overline{P})=\avol(\overline{Q})$ and $\overline{Q}-\overline{P}$ is effective, then $\overline{P}=\overline{Q}$.
\end{corollaryC}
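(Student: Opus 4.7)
The plan is to reduce to the equality case of Theorem~B for the pair $(\overline{P}, \overline{Q})$, read off from condition~(4) that the difference $\overline{F} := \overline{Q}-\overline{P}$ is an $\RR$-principal arithmetic divisor, and then invoke Moriwaki's generalized Dirichlet unit theorem (Theorem~\ref{thm:psef}) to conclude $\overline{F}=0$.

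Write $v := \avol(\overline{P}) = \avol(\overline{Q}) > 0$. First I would establish the identity $\avol(\overline{P}+\overline{Q}) = 2^{d+1}v$ by sandwiching. From below, Yuan's Brunn--Minkowski inequality (applicable since $\overline{P}$ and $\overline{Q}$ are in particular pseudo-effective) gives
\[
 \avol(\overline{P}+\overline{Q})^{1/(d+1)} \geq \avol(\overline{P})^{1/(d+1)} + \avol(\overline{Q})^{1/(d+1)} = 2v^{1/(d+1)}.
\]
From above, $2\overline{Q}-(\overline{P}+\overline{Q})=\overline{F}$ is effective, so by the monotonicity of the arithmetic volume under addition of an effective divisor,
\[
 \avol(\overline{P}+\overline{Q}) \leq \avol(2\overline{Q}) = 2^{d+1}v.
\]
Combining these, condition~(1) of Theorem~B is satisfied for the nef and big pair $(\overline{P},\overline{Q})$.

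Next, the implication (1)$\Rightarrow$(4) of Theorem~B produces $\phi_1,\dots,\phi_l\in\Rat(X)^{\times}$ and $a_1,\dots,a_l\in\RR$ with $v^{-1/(d+1)}(\overline{P}-\overline{Q}) = \sum_{i=1}^l a_i\widehat{(\phi_i)}$, so
\[
 \overline{F} \;=\; \sum_{i=1}^l \bigl(-v^{1/(d+1)}a_i\bigr)\widehat{(\phi_i)}
\]
is an effective $\RR$-principal arithmetic divisor. Finally, Theorem~\ref{thm:psef} asserts that any pseudo-effective $\RR$-principal arithmetic divisor vanishes; applying it yields $\overline{F}=0$, i.e. $\overline{P}=\overline{Q}$. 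The main obstacle is precisely this last step: for integral principal divisors it reduces to a Kronecker-type rigidity (principal effective integral arithmetic divisors come from roots of unity), but the $\RR$-linear extension is nontrivial and is exactly the role played by the generalized Dirichlet unit theorem of Moriwaki highlighted in the introduction.
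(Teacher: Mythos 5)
Your overall strategy matches the paper's: establish the Brunn--Minkowski equality $\avol(\overline{P}+\overline{Q})=2^{d+1}\avol(\overline{P})$ by the sandwich $\avol(2\overline{P})\leq\avol(\overline{P}+\overline{Q})\leq\avol(2\overline{Q})$, invoke Theorem~B (1)$\Rightarrow$(4) to write $\overline{Q}-\overline{P}=\widehat{(\phi)}$ for some $\phi\in\Rat(X)^{\times}\otimes_{\ZZ}\RR$, and then conclude that this effective $\RR$-principal arithmetic divisor must vanish. That is exactly the paper's proof of Corollary~\ref{cor:equalnefbig}.

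However, your final step contains a genuine error. You assert that ``Theorem~\ref{thm:psef} asserts that any pseudo-effective $\RR$-principal arithmetic divisor vanishes,'' and invoke that to conclude $\overline{F}=0$. Theorem~\ref{thm:psef} makes no such assertion; it is a \emph{criterion for pseudo-effectivity} in terms of intersection numbers against ample divisors, not a vanishing statement. Worse, the statement you attribute to it is actually false: every $\RR$-principal arithmetic divisor $\widehat{(\phi)}$ is automatically pseudo-effective, since $\avol(\widehat{(\phi)}+\overline{A})=\avol(\overline{A})>0$ for any big $\overline{A}$; so ``pseudo-effective $\RR$-principal $\Rightarrow$ zero'' would force all $\RR$-principal divisors to vanish. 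What you actually need, and what the paper uses, is the far more elementary Lemma~\ref{lem:equalitycond}~(2): an \emph{effective} $\RR$-principal arithmetic divisor is zero. That lemma follows immediately from multilinearity of the arithmetic intersection product (which gives $\adeg(\overline{H}^{\cdot d}\cdot\widehat{(\phi)})=0$ for any ample $\overline{H}$) together with the strict positivity in Lemma~\ref{lem:equalitycond}~(1). Your closing remark about the generalized Dirichlet unit theorem being the key to this last step also misplaces its role: Moriwaki's theorem is used upstream, inside the proof of Theorem~\ref{thm:psef}~(2) which feeds into Theorem~B's implication (2)$\Rightarrow$(4); it is not what makes an effective $\RR$-principal divisor vanish.
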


\begin{corollaryD}[Corollary~\ref{cor:Zariski}]
Let $\overline{D}$ be a big arithmetic $\RR$-divisor on $X$.
Then there exists at most one decomposition $\overline{D}=\overline{P}+\overline{N}$ such that
\begin{enumerate}
\item[\textup{(1)}] $\overline{P}$ is a nef arithmetic $\RR$-divisor,
\item[\textup{(2)}] $\overline{N}$ is an effective arithmetic $\RR$-divisor, and
\item[\textup{(3)}] $\avol(\overline{P})=\avol(\overline{D})$.
\end{enumerate}
Such a decomposition, if it exists, is called a Zariski decomposition of $\overline{D}$.
\end{corollaryD}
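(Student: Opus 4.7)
I plan to combine the extremal case of the arithmetic Brunn-Minkowski inequality expressed in Theorem~B with the rigidity supplied by Corollary~C. Suppose $\overline{D}=\overline{P}_{1}+\overline{N}_{1}=\overline{P}_{2}+\overline{N}_{2}$ are two decompositions both satisfying conditions~(1)--(3); I shall argue that $\overline{P}_{1}=\overline{P}_{2}$, from which $\overline{N}_{1}=\overline{N}_{2}$ is immediate.

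First I would set $\overline{P}:=(\overline{P}_{1}+\overline{P}_{2})/2$, which is nef and big, and note that $\overline{D}-\overline{P}=(\overline{N}_{1}+\overline{N}_{2})/2$ is effective. Monotonicity of $\avol$ then gives $\avol(\overline{P})\leq\avol(\overline{D})$, while Yuan's Brunn-Minkowski inequality combined with condition~(3) yields
\[
 \avol(\overline{P})^{\frac{1}{d+1}}\geq\tfrac{1}{2}\avol(\overline{P}_{1})^{\frac{1}{d+1}}+\tfrac{1}{2}\avol(\overline{P}_{2})^{\frac{1}{d+1}}=\avol(\overline{D})^{\frac{1}{d+1}}.
\]
Equality therefore holds throughout. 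In particular, Brunn-Minkowski is an equality for the nef and big pair $(\overline{P}_{1},\overline{P}_{2})$, and the implication $(1)\Rightarrow(4)$ of Theorem~B furnishes $\phi_{1},\dots,\phi_{l}\in\Rat(X)^{\times}$ and $a_{1},\dots,a_{l}\in\RR$ with
\[
 \overline{P}_{1}-\overline{P}_{2}=\sum_{i=1}^{l}a_{i}\widehat{(\phi_{i})}=:\widehat{(\psi)}_{\RR}
\]
(the normalization factors cancel since $\avol(\overline{P}_{1})=\avol(\overline{P}_{2})$). Correspondingly $\overline{N}_{2}-\overline{N}_{1}=\widehat{(\psi)}_{\RR}$, so both $\widehat{(\psi)}_{\RR}+\overline{N}_{1}$ and $\overline{N}_{2}-\widehat{(\psi)}_{\RR}$ are effective.

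The remaining task, which I expect to be the main obstacle, is to deduce that $\widehat{(\psi)}_{\RR}=0$. My natural first attempt is to apply Corollary~C to the nef and big pair $\overline{P}_{1}+\overline{P}_{2}$ and $2\overline{P}_{1}$: both have arithmetic volume $2^{d+1}\avol(\overline{D})$ by the step above, and their difference is $\pm\widehat{(\psi)}_{\RR}$, so whichever sign makes this difference effective collapses the pair via Corollary~C and forces $\overline{P}_{1}=\overline{P}_{2}$. The difficulty is that an $\RR$-principal arithmetic divisor need not have a definite sign: common components of $\overline{N}_{1}$ and $\overline{N}_{2}$ may render $\widehat{(\psi)}_{\RR}$ genuinely of mixed sign, both horizontally and at the archimedean places. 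To get past this I would decompose $\widehat{(\psi)}_{\RR}$ into its positive and negative parts (in the horizontal $\RR$-divisor and in the Green function separately) and apply Corollary~C to each piece, exploiting the fact that on a projective arithmetic variety any nonzero effective $\RR$-principal arithmetic divisor must vanish -- a Northcott/Kronecker-type rigidity coming from the product formula. Once $\widehat{(\psi)}_{\RR}=0$ is established, $\overline{P}_{1}=\overline{P}_{2}$ and hence $\overline{N}_{1}=\overline{N}_{2}$, yielding the uniqueness.
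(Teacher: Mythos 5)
Your argument is correct and follows the paper's route up to the key intermediate conclusion: via Brunn--Minkowski applied to the nef and big divisors $\overline{P}_1,\overline{P}_2$ with $\avol(\overline{P}_i)=\avol(\overline{D})$, equality holds, and Theorem~B gives $\overline{P}_1-\overline{P}_2=\widehat{(\psi)}_{\RR}$ for some $\psi\in\Rat(X)^\times\otimes_\ZZ\RR$. (The paper phrases it with $\overline{P}_1+\overline{P}_2$ rather than the average, which is cosmetic.) You also correctly recognize that the real difficulty is concluding $\widehat{(\psi)}_{\RR}=0$, since $\widehat{(\psi)}_{\RR}$ has no reason to be of definite sign.

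However, your proposed workaround does not close this gap. Decomposing $\widehat{(\psi)}_{\RR}=\overline{A}-\overline{B}$ into effective positive and negative parts (in multiplicities and Green function separately) produces pieces $\overline{A},\overline{B}$ that are effective but not themselves $\RR$-principal, so the Kronecker-type rigidity of Lemma~\ref{lem:equalitycond}~(2) does not apply to them. Nor can Corollary~C be applied ``to each piece'': it requires a pair of nef and big arithmetic $\RR$-divisors with effective difference, and the natural candidates built from $\overline{A}$, $\overline{B}$ (e.g.\ $\max(\overline{P}_1,\overline{P}_2)=\overline{D}-\min(\overline{N}_1,\overline{N}_2)$, whose Green function is psh as a max of psh) run into the problem that the underlying Weil $\RR$-divisor $\max(P_1,P_2)$ is not known to be relatively nef, so nefness of the ``max'' candidate is not established. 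The paper instead imports an external fact you do not invoke: by \cite[Theorem~4.1.1]{Moriwaki12b}, the multiplicities $\mult_x(N)$ for $x\in X_{\QQ}$ of the negative part of a Zariski decomposition are uniquely determined by $\overline{D}$ alone, so $\mult_x(N_1)=\mult_x(N_2)$ and hence $\mult_x(\psi)=0$ on $X_{\QQ}$; this forces $\widehat{(\psi)}_{\RR}=0$. Without this (or an equivalent) input about the horizontal part of $N$, the final step remains open.
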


It is known that, if $X$ is a regular projective arithmetic surface, then a Zariski decomposition of a big arithmetic $\RR$-divisor $\overline{D}$ always exists (\cite{Moriwaki12a}) and, if $\dim X$ is bigger than two, then there is no Zariski decomposition of $\overline{D}$ in general even after any blowing up of $X$.
%We note that most of the results in this note can be generalized to adelic $\RR$-divisors \cite{Moriwaki13} by approximating them by arithmetic $\RR$-divisors.

This paper is organized as follows:
in \S 2, we recall some positivity notions for arithmetic $\RR$-divisors and deduce Khovanskii-Teissier-type inequalities from the arithmetic Hodge index theorem (Theorem~\ref{thm:aKT}).
In \S 3, we define the arithmetic positive intersection numbers for arithmetic $\RR$-divisors.
In \S 4, we prove a limit formula expressing the arithmetic positive intersection numbers in terms of asymptotic intersection numbers of moving parts (Proposition~\ref{prop:aposint}).
We can use this as an alternative definition for the arithmetic positive intersection numbers.
In \S 5, we establish the differentiability of the arithmetic volume functions along arithmetic $\RR$-divisors (Theorem~\ref{thm:diffavol}).
The proof is based on the arguments due to Boucksom-Favre-Jonsson \cite{Bou_Fav_Mat06}.
As in \cite{Chen11}, we also apply the results to the problem of equidistribution of rational points (Corollary~\ref{cor:equidistthm}).
In \S 6, we give a numerical characterization of pseudo-effective arithmetic $\RR$-divisors (Theorem~\ref{thm:psef}), which is an arithmetic analogue of the results of Boucksom-Demailly-Paun-Peternell \cite{Boucksom_Demailly_Paun_Peternell}.
Finally, in \S 7, we prove the main results, Theorems~A (Theorem~\ref{thm:discant}) and B (Theorem~\ref{thm:strictconcave}) and Corollaries~C (Corollary~\ref{cor:equalnefbig}) and D (Corollary~\ref{cor:Zariski}).

%%%
\section{Arithmetic Khovanskii-Teissier inequalities}\label{sec:arithvol}

%Throughout this paper, ``divisor'' means ``Cartier divisor'' and arithmetic $\RR$-divisors are always assumed to be of $C^0$-type (unless otherwise specified).
Let $X$ be a \emph{projective arithmetic variety}, that is, a reduced irreducible scheme projective and flat over $\Spec(\ZZ)$.
Throughout this paper, we always assume that $X$ is normal.
We denote the dimension of $X$ by $d+1$, and the complex analytic space associated to $X_{\CC}:=X\times_{\Spec(\ZZ)}\Spec(\CC)$ by $X(\CC)$.
We say that $X$ is \emph{generically smooth} if the generic fiber $X_{\QQ}:=X\times_{\Spec(\ZZ)}\Spec(\QQ)$ is smooth.
A \emph{$C^0$-function} on $X$ is a real-valued continuous function on $X(\CC)$ that is invariant under the complex conjugation.
We denote the $\RR$-vector space of all $C^0$-functions on $X$ by $C^0(X)$.
When we consider a $C^{\infty}$-function on $X(\CC)$, we always assume that $X$ is generically smooth.
Let $\KK$ be either $\RR$ or $\QQ$ and let $\mathscr{T}$ be either $C^0$ or $C^{\infty}$.
Let $D$ be a $\KK$-divisor on $X$, which can be written as a sum $D=a_1D_1+\dots+a_lD_l$ with $a_1,\dots,a_l\in\KK$ and effective Cartier divisors $D_1,\dots,D_l$.
A \emph{$D$-Green function of $C^0$-type} (resp.\ \emph{$D$-Green function of $C^{\infty}$-type}) is a continuous function $g_{\overline{D}}:(X\setminus\bigcup_{i=1}^l\Supp(D_i))(\CC)\to\RR$ such that $g_{\overline{D}}$ is invariant under the complex conjugation and that for each $p\in X(\CC)$ there exists an open neighborhood $U\subset X(\CC)$ of $p$ such that the function
\[
 g_{\overline{D}}(x)+\sum_{i=1}^la_i\log|f_i(x)|^2
\]
extends to a $C^0$-function (resp.\ $C^{\infty}$-function) on $U$, where $f_i$ denotes a local defining equation for $D_i$ on $U$.
One can verify that this definition does not depend on the choice of the expression $D=a_1D_1+\dots+a_lD_l$ and the local defining equations $f_1,\dots,f_l$.
We call the pair $\overline{D}:=(D,g_{\overline{D}})$ consisting of a $\KK$-divisor $D$ and a $D$-Green function $g_{\overline{D}}$ of $\mathscr{T}$-type an \emph{arithmetic $\KK$-divisor of $\mathscr{T}$-type} on $X$.
We denote the $\KK$-vector space of all arithmetic $\KK$-divisors on $X$ of $\mathscr{T}$-type by $\aDiv_{\KK}(X;\mathscr{T})$.
Let $x\in X(\overline{\QQ})$ be a rational point, let $K(x)$ be the minimal field of definition for $x$, and let $C_x$ be the normalization of the arithmetic curve $\overline{\{x\}}$.
If $x\in(X\setminus\bigcup_{i=1}^l\Supp(D_i))(\overline{\QQ})$, then we define the \emph{height} of $x$ with respect to $\overline{D}$ as
\[
 h_{\overline{D}}(x)=\frac{1}{[K(x):\QQ]}\left(\sum_{i=1}^la_i\log\sharp\left(\mathcal{O}_{C_x}(D_i)/\mathcal{O}_{C_x}\right)+\frac{1}{2}\sum_{\sigma:K(x)\to\CC}g_{\overline{D}}(x^{\sigma})\right).
\]
In general, we can define $h_{\overline{D}}(x)$ for any rational point $x\in X(\overline{\QQ})$ and for any arithmetic $\RR$-divisor $\overline{D}$ by expressing $\overline{D}$ as a difference of two arithmetic $\RR$-divisors each of which does not contain $x$ in its support (see \cite[\S 5.3]{Moriwaki12a} for details).
Let $\Rat(X)$ be the rational function field of $X$.
Associated to $\overline{D}:=(D,g_{\overline{D}})\in\aDiv_{\RR}(X;C^0)$, we have a $\ZZ$-module defined by
\[
 \Hz(X,D):=\{\phi\in\Rat(X)^{\times}\,|\,D+(\phi)\geq 0\}\cup\{0\},
\]
and a norm $\|\cdot\|_{\sup}^{\overline{D}}$ on $\Hz(X,D)_{\CC}:=\Hz(X,D)\otimes_{\ZZ}\CC$ defined by
\[
 \|\phi\|_{\sup}^{\overline{D}}:=\begin{cases}\sup_{x\in X(\CC)}\{|\phi|\exp(-g_{\overline{D}}/2)\} & \text{if $\phi\neq 0$,} \\ 0 & \text{if $\phi=0$}\end{cases}
\]
for $\phi\in\Hz(X,D)_{\CC}=\{\psi\in\Rat(X(\CC))^{\times}\,|\,D_{\CC}+(\psi)_{\CC}\geq 0\}\cup\{0\}$.
In other words, $\Hz(X,D)$ is defined as the $\ZZ$-module of global sections of $\mathcal{O}_X(\lfloor D\rfloor))$, where $\mathcal{O}_X(\lfloor D\rfloor)$ denotes the reflexive sheaf of rank one on $X$ associated to the round down $\lfloor D\rfloor$.
Note that the function
\[
 |\phi|_{\overline{D}}:=|\phi|\exp(-g_{\overline{D}}/2)
\]
is continuous on $X(\CC)$.
In fact, if we write $D=\sum_{i=1}^la_iD_i$ with $a_i\in\RR$ and effective Cartier divisors $D_i$ on $X$ and denote a local defining equation for $D_i$ by $f_i$, then we can see that near each point on $X(\CC)$ the rational function $\phi\cdot f_1^{\lfloor a_1\rfloor}\cdots f_l^{\lfloor a_l\rfloor}$ extends to a regular function.
Let $\pi:X'\to X$ be a surjective birational morphism of normal projective arithmetic varieties.
Then the natural homomorphism
\[
 \pi^*:(\Hz(X,D),\|\cdot\|_{\sup}^{\overline{D}})\xrightarrow{\sim}(\Hz(X',\pi^*D),\|\cdot\|_{\sup}^{\pi^*\overline{D}}),\quad \phi\mapsto\pi^*\phi,
\]
is an isometry.
We define $\ZZ$-submodules of $\Hz(X,D)$ by
\[
 \mathrm{F}^t(X,\overline{D}):=\left\langle\phi\in\Hz(X,D)\,\left|\,\|\phi\|_{\sup}^{\overline{D}}\leq\exp(-t)\right.\right\rangle_{\ZZ}
\]
and
\[
 \mathrm{F}^{t+}(X,\overline{D}):=\left\langle\phi\in\Hz(X,D)\,\left|\,\|\phi\|_{\sup}^{\overline{D}}<\exp(-t)\right.\right\rangle_{\ZZ}
\]
for $t\in\RR$.
For $\overline{D}\in\aDiv_{\RR}(X;C^0)$, we define the arithmetic volume of $\overline{D}$ as
\[
 \avol(\overline{D}):=\limsup_{m\to\infty}\frac{\log\sharp\{\phi\in\Hz(X,D)\,|\,\|\phi\|_{\sup}^{\overline{D}}\leq 1\}}{m^{d+1}/(d+1)!}.
\]
In \cite{Moriwaki12a}, Moriwaki proved that the volume function $\avol:\aDiv_{\RR}(X;C^0)\to\RR$ is continuous in the sense that
\[
 \lim_{\varepsilon_1,\dots,\varepsilon_r,\|f\|_{\sup}\to 0}\avol\left(\overline{D}+\sum_{i=1}^r\varepsilon_i\overline{E}_i+(0,f)\right)=\avol(\overline{D})
\]
for any arithmetic $\RR$-divisors $\overline{E}_1,\dots,\overline{E}_r$ and for any $f\in C^0(X)$.

\begin{lemma}\label{lem:defineachivol}
For any $f\in C^0(X)$, we have
\[
 |\avol(\overline{D}+(0,2f))-\avol(\overline{D})|\leq (d+1)\|f\|_{\sup}\vol(D_{\QQ}).
\]
\end{lemma}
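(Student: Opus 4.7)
The plan is a direct norm-comparison argument followed by a standard lattice-point counting estimate. The point is that adding $(0,2f)$ to $\overline{D}$ simply multiplies the supremum norm by $e^{-f}$ pointwise, and rescaling by $e^{\pm\|f\|_{\sup}}$ changes the $\hat{h}^0$-count by at most (rank)$\times\|f\|_{\sup}+O(\text{rank})$, which after normalization by $m^{d+1}/(d+1)!$ gives exactly the factor $(d+1)\|f\|_{\sup}\vol(D_{\QQ})$.

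First, I would fix $m\geq 1$ and compare the norms at level $m$. Since $\|\phi\|_{\sup}^{m\overline{D}+(0,2mf)}(x)=|\phi(x)|\exp(-mg_{\overline{D}}(x)/2)\exp(-mf(x))$, the pointwise factor $e^{-mf(x)}$ lies between $e^{-m\|f\|_{\sup}}$ and $e^{m\|f\|_{\sup}}$, so
\[
 e^{-m\|f\|_{\sup}}\|\phi\|_{\sup}^{m\overline{D}}\leq\|\phi\|_{\sup}^{m\overline{D}+(0,2mf)}\leq e^{m\|f\|_{\sup}}\|\phi\|_{\sup}^{m\overline{D}}.
\]
In particular, $\{\phi\in\Hz(X,mD):\|\phi\|_{\sup}^{m\overline{D}+(0,2mf)}\leq 1\}\subseteq\{\phi:\|\phi\|_{\sup}^{m\overline{D}}\leq e^{m\|f\|_{\sup}}\}$.

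Next I would invoke the standard Minkowski-type counting lemma: for a normed free $\ZZ$-module $(M,\|\cdot\|)$ of rank $r$ and any $c\geq 0$,
\[
 \log\sharp\{x\in M:\|x\|\leq e^c\}\leq\log\sharp\{x\in M:\|x\|\leq 1\}+rc+O(r).
\]
Applying this with $M=\Hz(X,mD)$, rank $r_m$, and $c=m\|f\|_{\sup}$ gives
\[
 \log\sharp\{\phi:\|\phi\|_{\sup}^{m\overline{D}+(0,2mf)}\leq 1\}\leq\log\sharp\{\phi:\|\phi\|_{\sup}^{m\overline{D}}\leq 1\}+r_m\cdot m\|f\|_{\sup}+O(r_m).
\]
Since $r_m=\dim_{\QQ}\Hz(X_{\QQ},mD_{\QQ})\sim m^d\vol(D_{\QQ})/d!$, dividing by $m^{d+1}/(d+1)!$ and taking $\limsup_{m\to\infty}$ yields
\[
 \avol(\overline{D}+(0,2f))\leq\avol(\overline{D})+(d+1)\|f\|_{\sup}\vol(D_{\QQ}).
\]

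Finally, the reverse inequality follows by symmetry: either apply the same argument with $\overline{D}$ and $\overline{D}+(0,2f)$ swapped (i.e.\ replace $f$ by $-f$), or use the opposite inclusion of counting sets obtained from the lower norm estimate. Combining the two inequalities gives the claim. The only nontrivial input is the counting lemma above, which is completely standard in Arakelov geometry (compare e.g.\ Gillet-Soul\'e or Moriwaki's work on continuity of $\avol$); no serious obstacle arises, since the $O(r_m)$ error term is absorbed after division by $m^{d+1}$.
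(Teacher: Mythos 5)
Your proof is correct, and it takes essentially the same approach as the paper: the paper simply cites Yuan \cite[Lemma~2.9]{Yuan09}, which encapsulates precisely the argument you spell out (reduce the continuous function $f$ to the constant $\|f\|_{\sup}$ via the pointwise norm comparison $e^{-m\|f\|_{\sup}}\|\cdot\|_{\sup}^{m\overline{D}}\leq\|\cdot\|_{\sup}^{m\overline{D}+(0,2mf)}\leq e^{m\|f\|_{\sup}}\|\cdot\|_{\sup}^{m\overline{D}}$, then apply a Minkowski--Gillet--Soul\'e lattice-point dilation estimate, and absorb the $O(r_m)=O(m^d)$ error after dividing by $m^{d+1}/(d+1)!$). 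The one place you should be slightly more careful is the exact statement of the counting lemma: the clean form usable here is that for a free $\ZZ$-module of rank $r$ with a norm and $c\geq 0$, one has $\log\sharp\{x:\|x\|\leq e^c\}\leq\log\sharp\{x:\|x\|\leq 1\}+r(c+\log 3)$ (a successive-minima argument or the standard Gillet--Soul\'e covering estimate), and any version with error $o(m^{d+1})$ after substituting $r=r_m\sim m^d\vol(D_{\QQ})/d!$ and $c=m\|f\|_{\sup}$ works; your $O(r)$ is fine. Also note, as you implicitly use, that $\Hz(X,m(D+0))=\Hz(X,mD)$ since $(0,2f)$ has trivial divisor part, so the underlying $\ZZ$-module does not change and only the norm does.
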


\begin{remark}
The arithmetic divisor $(0,2f)$ corresponds to the Hermitian line bundle $(\mathcal{O}_X,\exp(-f)|\cdot|)$.
\end{remark}

\begin{proof}
This follows, for example, from \cite[Lemma~2.9]{Yuan09}.
\end{proof}

We recall some positivity notions for the arithmetic $\RR$-divisors.

\begin{itemize}
\item (\emph{ample}): Suppose that $X_{\QQ}$ is smooth.
$\overline{D}\in\aDiv_{\RR}(X;C^0)$ is said to be \emph{ample} if there exist arithmetic divisors of $C^{\infty}$-type, $\overline{A}_1,\dots,\overline{A}_l\in\aDiv(X;C^{\infty})$, such that (i) $A_i$ are ample, (ii) the curvature forms $\omega(\overline{A}_i)$ are positive point-wise on $X(\CC)$, and (iii) $\mathrm{F}^{0+}(X,m\overline{A}_i)=\Hz(X,mA_i)$ for all $m\gg 1$, and positive real numbers $a_1,\dots,a_l\in\RR_{>0}$ such that $\overline{D}=a_1\overline{A}_1+\dots+a_l\overline{A}_l$.
We say that $\overline{D}\in\aDiv_{\RR}(X;C^0)$ is \emph{adequate} if there exist an ample arithmetic $\RR$-divisor $\overline{A}$ and a non-negative continuous function $f\in C^0(X)$ such that $\overline{D}=\overline{A}+(0,f)$.
\item (\emph{nef}): Let $\overline{D}:=(D,g_{\overline{D}})\in\aDiv_{\KK}(X;\mathscr{T})$.
The Green function $g_{\overline{D}}$ is said to be \emph{plurisubharmonic} if $\pi^*g_{\overline{D}}$ is plurisubharmonic on $Y$ for one (and hence, for any) resolution of singularities $\pi:Y\to X(\CC)$.
We say that $\overline{D}$ is \emph{nef} if $D$ is relatively nef, $g_{\overline{D}}$ is plurisubharmonic, and $h_{\overline{D}}(x)\geq 0$ for every $x\in X(\overline{\QQ})$.
We denote the cone of all nef arithmetic $\KK$-divisors of $\mathscr{T}$-type by $\aNef_{\KK}(X;\mathscr{T})$, and denote the $\KK$-subspace of $\aDiv_{\KK}(X;\mathscr{T})$ generated by $\aNef_{\KK}(X;\mathscr{T})$ by $\aDiv_{\KK}^{\rm Nef}(X;\mathscr{T})$.
The elements of $\aDiv_{\KK}^{\rm Nef}(X;\mathscr{T})$ are usually referred to as \emph{integrable} arithmetic $\KK$-divisors.
\item (\emph{big}): $\overline{D}\in\aDiv_{\KK}(X;\mathscr{T})$ is said to be \emph{big} if $\avol(\overline{D})>0$.
We denote the cone of all big arithmetic $\KK$-divisors of $\mathscr{T}$-type by $\aBigCone_{\KK}(X;\mathscr{T})$.
Since an open convex cone in a finite dimensional $\RR$-vector space $\RR^r$ is generated by its rational points \cite[Theorem~6.3]{RockCA70}, the following two conditions are equivalent:
\begin{enumerate}
\item[(1)] $\overline{D}$ is big.
\item[(2)] There exist big arithmetic divisors $\overline{D}_1,\dots,\overline{D}_l$ and positive real numbers $a_1,\dots,a_l\in\RR_{>0}$ such that $\overline{D}=a_1\overline{D}_1+\dots+a_l\overline{D}_l$.
\end{enumerate}
\item (\emph{effective}): Let $\overline{D}:=(D,g_{\overline{D}})\in\aDiv_{\KK}(X;\mathscr{T})$.
We say that $\overline{D}$ is \emph{effective} if $\mult_{\Gamma}D\geq 0$ for all prime divisors $\Gamma$ on $X$ and $g_{\overline{D}}\geq 0$.
We write $\overline{D}\geq 0$ if $\overline{D}$ is effective.
\item (\emph{pseudo-effective}): We say that $\overline{D}\in\aDiv_{\RR}(X;C^0)$ is \emph{pseudo-effective} if, for any big arithmetic $\RR$-divisor $\overline{A}$, $\overline{D}+\overline{A}$ is big.
\end{itemize}

When $X$ is generically smooth and normal, Moriwaki \cite[\S 6.4]{Moriwaki12a} defined a map
\[
 \aDiv_{\RR}^{\rm Nef}(X;C^0)^{\times (d+1)}\to\RR,\quad (\overline{D}_0,\dots,\overline{D}_d)\mapsto\adeg(\overline{D}_0\cdots\overline{D}_d),
\]
which extends the usual arithmetic intersection product.
In the following, we show that one can define this map when $X$ is not necessarily generically smooth.

\begin{lemma}\label{lem:aint}
Let $\pi:X'\to X$ be a birational morphism of generically smooth normal projective arithmetic varieties.
Then
\[
 \adeg(\pi^*\overline{D}_0\cdots\pi^*\overline{D}_d)=\adeg(\overline{D}_0\cdots\overline{D}_d)
\]
for all $\overline{D}_0,\dots,\overline{D}_d\in\aDiv_{\RR}^{\rm Nef}(X;C^0)$.
\end{lemma}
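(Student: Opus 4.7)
The plan is to reduce, in stages, to the classical arithmetic projection formula for smooth data. First, since the pairing $\adeg:\aDiv_{\RR}^{\rm Nef}(X;C^0)^{\times(d+1)}\to\RR$ is symmetric and $\RR$-multilinear by construction, and both sides of the asserted equality depend multilinearly on $(\overline{D}_0,\ldots,\overline{D}_d)$, it suffices to treat the case where every $\overline{D}_i$ lies in the nef cone $\aNef_{\RR}(X;C^0)$. For a birational $\pi$, the pullback preserves nefness: if $\overline{D}=(D,g_{\overline{D}})$ is nef, then $\pi^*D$ is relatively nef on $X'$, $g_{\overline{D}}\circ\pi$ is plurisubharmonic on $X'(\CC)$, and heights transform functorially, $h_{\pi^*\overline{D}}(x')=h_{\overline{D}}(\pi(x'))\geq 0$ for every $x'\in X'(\overline{\QQ})$.

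Second, the plan is to reduce to the $C^\infty$-type setting. The intersection number of nef arithmetic $\RR$-divisors of $C^0$-type on $X$ is, in Moriwaki's setup, defined as a limit of intersection numbers of approximating nef arithmetic $\RR$-divisors whose Green functions are smoothed (via Richberg-type regularization of continuous plurisubharmonic functions, combined with small ample perturbations to preserve nonnegativity of heights) and converge uniformly to $g_{\overline{D}_i}$. If $g_{i,n}\to g_{\overline{D}_i}$ uniformly on $X(\CC)$, then $g_{i,n}\circ\pi\to g_{\overline{D}_i}\circ\pi$ uniformly on $X'(\CC)$, so the continuity of the arithmetic intersection product under uniform convergence of Green functions reduces the lemma to the case of $C^\infty$-type nef arithmetic $\QQ$-divisors.

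Third, for $C^\infty$-type nef arithmetic $\QQ$-divisors, after clearing denominators we may take them to be integral, and the identity becomes the classical projection formula in Gillet--Soul\'e arithmetic intersection theory: since $\pi$ is birational, $\pi_*\pi^*=\id$ on $\aCH^*(X)_{\RR}$, so pulling back the $(d+1)$-fold product of the arithmetic first Chern classes of the $\overline{D}_i$ and pushing the product back down returns the original class, after which taking the arithmetic degree yields the desired equality.

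The principal obstacle lies in the second step: one must check that Moriwaki's smoothing procedure on $X$ can be transferred to $X'$ via $\pi^*$ in such a way that both sides of the asserted equality converge to the correct $C^0$ intersection numbers. The substantive content is that uniform convergence of Green functions is preserved by $g\mapsto g\circ\pi$ (which is immediate) and that no additional positivity corrections are needed on $X'$ beyond those already chosen on $X$, a point that should be made explicit before invoking the continuity of $\adeg$.
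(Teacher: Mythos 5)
Your proposal follows essentially the same route as the paper's proof: reduce to the nef cone by multilinearity, approximate by nef arithmetic $\QQ$-divisors of $C^{\infty}$-type via ample perturbations and Green-function regularization, invoke the projection formula for the $C^{\infty}$ $\QQ$-divisor case, and pass to the limit by continuity of $\adeg$. One minor imprecision worth making explicit: in the paper the ample perturbations $\overline{H}_i$ are chosen primarily so that $\overline{D}_i+\overline{H}_i\in\aNef_{\QQ}(X;C^0)$ (that is, to make the underlying divisor $\QQ$-rational), while a second perturbation $(0,f_i)$ then smooths the Green function without changing the divisor; the specific references used for the smoothing step are Blocki--Kolodziej and Moriwaki's Theorem~4.6, which play the role of your Richberg-type regularization.
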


\begin{proof}
If $\overline{D}_0,\dots,\overline{D}_d\in\aDiv_{\QQ}(X;C^{\infty})$, then the assertions are all clear (see the projection formula \cite[Proposition~2.4.1]{Kawaguchi_Moriwaki}).
In general, we may assume that $\overline{D}_0,\dots,\overline{D}_d\in\aNef_{\RR}(X;C^0)$.
Let $\varepsilon>0$ be a real number.
Let $\overline{H}_i$ be an ample arithmetic $\RR$-divisor such that $\overline{D}_i+\overline{H}_i\in\aNef_{\QQ}(X;C^0)$,
\[
 |\adeg((\overline{D}_0+\overline{H}_0)\cdots(\overline{D}_d+\overline{H}_d))-\adeg(\overline{D}_0\cdots\overline{D}_d)|<\varepsilon,
\]
and
\[
 |\adeg(\pi^*(\overline{D}_0+\overline{H}_0)\cdots\pi^*(\overline{D}_d+\overline{H}_d))-\adeg(\pi^*\overline{D}_0\cdots\pi^*\overline{D}_d)|<\varepsilon.
\]
By using \cite[Theorem~1]{Blocki_Kolo07} or \cite[Theorem~4.6]{Moriwaki12a}, one can find a non-negative function $f_i\in C^0(X)$ such that $\overline{D}_i+\overline{H}_i+(0,f_i)\in\aNef_{\QQ}(X;C^{\infty})$,
\begin{multline*}
 |\adeg((\overline{D}_0+\overline{H}_0+(0,f_0))\cdots(\overline{D}_d+\overline{H}_d+(0,f_d)))\\
 -\adeg((\overline{D}_0+\overline{H}_0)\cdots(\overline{D}_d+\overline{H}_d))|<\varepsilon,
\end{multline*}
and
\begin{multline*}
 |\adeg(\pi^*(\overline{D}_0+\overline{H}_0+(0,f_0))\cdots\pi^*(\overline{D}_d+\overline{H}_d+(0,f_d)))\\
 -\adeg(\pi^*(\overline{D}_0+\overline{H}_0)\cdots\pi^*(\overline{D}_d+\overline{H}_d))|<\varepsilon.
\end{multline*}
Since $\adeg(\pi^*(\overline{D}_0+\overline{H}_0+(0,f_0))\cdots\pi^*(\overline{D}_d+\overline{H}_d+(0,f_d)))=\adeg((\overline{D}_0+\overline{H}_0+(0,f_0))\cdots(\overline{D}_d+\overline{H}_d+(0,f_d)))$, we have
\[
 |\adeg(\pi^*\overline{D}_0\cdots\pi^*\overline{D}_d)-\adeg(\overline{D}_0\cdots\overline{D}_d)|<4\varepsilon
\]
for any $\varepsilon>0$.
\end{proof}

Suppose that $X$ is not generically smooth.
Let $\pi:X'\to X$ be a normalized generic resolution of singularities, and let $\overline{D}_0,\dots,\overline{D}_d\in\aDiv_{\RR}^{\rm Nef}(X;C^0)$.
Then $\pi^*\overline{D}_i\in\aDiv_{\RR}^{\rm Nef}(X';C^0)$ for all $i$.
We define the \emph{arithmetic intersection number} of $(\overline{D}_0,\dots,\overline{D}_d)$ as
\[
 \adeg(\overline{D}_0\cdots\overline{D}_d):=\adeg(\pi^*\overline{D}_0\cdots\pi^*\overline{D}_d),
\]
where the right-hand-side does not depend on the choice of $\pi$ by Lemma~\ref{lem:aint}.
By \cite[Proposition~6.4.2]{Moriwaki12a}, the map
\[
 \aDiv_{\RR}^{\rm Nef}(X;C^0)^{\times (d+1)}\to\RR,\quad (\overline{D}_0,\dots,\overline{D}_d)\mapsto\adeg(\overline{D}_0\cdots\overline{D}_d),
\]
is symmetric and multilinear and hence is also continuous: that is,
\begin{equation}
 \lim_{\varepsilon_{ij}\to 0}\adeg\left(\left(\overline{D}_0+\sum_{i=1}^{r_0}\varepsilon_{i0}\overline{E}_{i0}\right)\cdots\left(\overline{D}_d+\sum_{i=1}^{r_d}\varepsilon_{id}\overline{E}_{id}\right)\right)=\adeg(\overline{D}_0\cdots\overline{D}_d)
\end{equation}
for any $r_0,\dots,r_d\in\ZZ_{\geq 0}$ and for any integrable arithmetic $\RR$-divisors $\overline{E}_{10},\dots,\overline{E}_{r_dd}$.

\begin{lemma}\label{lem:aint2}
Let $X$ be a normal projective arithmetic variety.
(We do not assume that $X$ is generically smooth.)
\begin{enumerate}
\item[\textup{(1)}] If $\overline{D}_1,\dots,\overline{D}_d\in\aDiv_{\RR}^{\rm Nef}(X;C^0)$ and $\lambda\in\RR$, then
\[
 \adeg((0,2\lambda)\cdot\overline{D}_1\cdots\overline{D}_d)=\lambda\deg(D_{1,\QQ}\cdots D_{d,\QQ}).
\]
\item[\textup{(2)}] If $\overline{D}_1,\dots,\overline{D}_d\in\aNef_{\RR}(X;C^0)$ and $\overline{E}\in\aDiv_{\RR}^{\rm Nef}(X;C^0)$ is pseudo-effective, then
\[
 \adeg(\overline{E}\cdot\overline{D}_1\cdots\overline{D}_d)\geq 0.
\]
\item[\textup{(3)}] Let $\overline{D}_0,\dots,\overline{D}_d,\overline{E}_0,\dots,\overline{E}_d\in\aNef_{\RR}(X;C^0)$.
If $\overline{D}_i-\overline{E}_i$ is pseudo-effective for every $i$, then
\[
 \adeg(\overline{D}_0\cdots\overline{D}_d)\geq\adeg(\overline{E}_0\cdots\overline{E}_d).
\]
\end{enumerate}
\end{lemma}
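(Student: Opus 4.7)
My plan is to prove (1) by reducing through several steps to a classical identity in the generically smooth $C^\infty$-setting, to derive (2) from an auxiliary non-negativity statement for effective integrable divisors combined with an ample perturbation, and to conclude (3) from (2) by a telescoping expansion.

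For (1), I invoke Lemma~\ref{lem:aint} and the definition of $\adeg$ via a normalized generic resolution to reduce to the case that $X$ is generically smooth. Since both sides of the asserted identity are $\RR$-multilinear and continuous in $(\overline{D}_1,\dots,\overline{D}_d)$ on $\aDiv_{\RR}^{\rm Nef}(X;C^0)$, I may assume each $\overline{D}_i$ is nef, and then regularization of $C^0$-plurisubharmonic Green functions into $C^\infty$-plurisubharmonic ones, together with an ample perturbation, further reduces the problem to $\overline{D}_i\in\aNef_{\QQ}(X;C^\infty)$. In that classical setting the identity follows from
\[
\adeg\bigl((0,2\lambda)\cdot\overline{D}_1\cdots\overline{D}_d\bigr)=\lambda\int_{X(\CC)}c_1(\overline{D}_1)\wedge\cdots\wedge c_1(\overline{D}_d)=\lambda\deg(D_{1,\QQ}\cdots D_{d,\QQ}).
\]

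For (2), I first establish the auxiliary claim that if $\overline{F}\in\aDiv_{\RR}^{\rm Nef}(X;C^0)$ is effective and $\overline{D}_1,\dots,\overline{D}_d\in\aNef_{\RR}(X;C^0)$, then $\adeg(\overline{F}\cdot\overline{D}_1\cdots\overline{D}_d)\geq 0$. By the same chain of reductions as in (1), this time preserving effectivity of $\overline{F}$ during regularization (by adding a small positive constant to $g_{\overline{F}}$), the claim reduces to the generically smooth $C^\infty$ case, where induction on $d$ based on the decomposition
\[
\adeg(\overline{F}\cdot\overline{D}_1\cdots\overline{D}_d)=\adeg(\overline{D}_1|_F\cdots\overline{D}_d|_F)+\tfrac{1}{2}\int_{X(\CC)}g_{\overline{F}}\,c_1(\overline{D}_1)\wedge\cdots\wedge c_1(\overline{D}_d)
\]
together with the positivity of wedge products of semipositive $(1,1)$-forms gives the claim. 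To derive (2) for a pseudo-effective integrable $\overline{E}$, I fix an ample $\overline{A}$ and $\varepsilon>0$; then $\overline{E}+\varepsilon\overline{A}$ is big, so for some $m\gg 1$ there exists $\phi\in\Hz(X,m(E+\varepsilon A))^{\times}$ with $\|\phi\|_{\sup}^{m(\overline{E}+\varepsilon\overline{A})}\leq 1$, and $\overline{F}:=m(\overline{E}+\varepsilon\overline{A})+\widehat{(\phi)}$ is effective and integrable. Applying the auxiliary claim to $\overline{F}$ and using the vanishing $\adeg(\widehat{(\phi)}\cdot\overline{D}_1\cdots\overline{D}_d)=0$ (a standard consequence of the projection formula in arithmetic intersection theory) yields $\adeg((\overline{E}+\varepsilon\overline{A})\cdot\overline{D}_1\cdots\overline{D}_d)\geq 0$, and (2) follows by continuity of $\adeg$ as $\varepsilon\to 0$.

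For (3), I expand
\[
\adeg(\overline{D}_0\cdots\overline{D}_d)-\adeg(\overline{E}_0\cdots\overline{E}_d)=\sum_{i=0}^{d}\adeg\bigl(\overline{E}_0\cdots\overline{E}_{i-1}\cdot(\overline{D}_i-\overline{E}_i)\cdot\overline{D}_{i+1}\cdots\overline{D}_d\bigr)
\]
and apply (2) to each summand: the middle factor $\overline{D}_i-\overline{E}_i$ is pseudo-effective by hypothesis and integrable as a difference of two nef divisors, while the outer factors are nef. The main obstacle is the auxiliary non-negativity in (2); transferring the classical smooth $C^\infty$-positivity to effective arithmetic $\RR$-divisors of $C^0$-type on a possibly non-generically-smooth $X$ demands a regularization that simultaneously preserves effectivity and plurisubharmonicity of Green functions, so care is needed in commuting the positivity estimate with the successive limits arising from resolution, ample perturbation, and smoothing.
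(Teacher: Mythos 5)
Your overall strategy --- reduce to the generically smooth $C^\infty$ case as in Lemma~\ref{lem:aint}, then telescope for (3) --- is exactly the paper's, and (1) and (3) as you write them match the paper closely. For (2), the paper simply cites ``the $C^\infty$ case'' and the approximation argument of Lemma~\ref{lem:aint}, whereas you give a genuine derivation: first an auxiliary non-negativity for \emph{effective} integrable $\overline{F}$ via the restriction formula and induction on dimension, then a Dirichlet-type argument (replace the pseudo-effective $\overline{E}$ by the big $\overline{E}+\varepsilon\overline{A}$, produce a small section $\phi$, pass to the effective $m(\overline{E}+\varepsilon\overline{A})+\widehat{(\phi)}$, and use $\adeg(\widehat{(\phi)}\cdot\overline{D}_1\cdots\overline{D}_d)=0$ and multilinearity as $\varepsilon\to 0$). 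That chain is correct, and it is a useful unpacking of what the paper leaves implicit. One remark that removes the obstacle you flag at the end: you do \emph{not} need $\overline{F}$ itself to be integrable, nor to regularize $\overline{F}$ to $C^\infty$ while preserving both effectivity and plurisubharmonicity --- which, as you note, is awkward, since $(0,f)$ with $f$ merely $C^\infty$ is not integrable. By Lemma~\ref{lem:aintkiso}, the pairing $\adeg$ is already defined on $\aDiv_{\RR}(X;C^0)\times\aDiv_{\RR}^{\rm Nef}(X;C^0)^{\times d}$, with the first slot allowing an arbitrary $C^0$-type $\RR$-divisor, so it suffices to leave $\overline{F}$ alone (only $g_{\overline{F}}$ locally integrable is needed for the restriction-formula integral) and regularize the $\overline{D}_i$ exactly as in the proof of Lemma~\ref{lem:aint}. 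With that simplification the proof is clean and complete; the remaining steps (nefness of $\overline{D}_i|_{F}$, vanishing of $\adeg(\widehat{(\phi)}\cdots)$, continuity in $\varepsilon$) are standard. Your treatment of (3) via the telescoping sum $\sum_i\adeg(\overline{E}_0\cdots\overline{E}_{i-1}\cdot(\overline{D}_i-\overline{E}_i)\cdot\overline{D}_{i+1}\cdots\overline{D}_d)$ is equivalent to the paper's chain of inequalities and is correct since each $\overline{D}_i-\overline{E}_i$ is pseudo-effective and integrable.
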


\begin{proof}
(1) and (2) follow from the $C^{\infty}$ case as in Lemma~\ref{lem:aint}.

(3): By applying (2) successively, we have
\[
 \adeg(\overline{D}_0\cdots\overline{D}_d)\geq\adeg(\overline{E}_0\overline{D}_1\cdots\overline{D}_d)\geq\cdots\geq\adeg(\overline{E}_0\cdots\overline{E}_d).
\]
\end{proof}

\begin{lemma}\label{lem:aintkiso}
Let $X$ be a normal projective arithmetic variety.
(We do not assume that $X$ is generically smooth.)
The arithmetic intersection product uniquely extends to a multilinear map
\[
 \aDiv_{\RR}(X;C^0)\times\aDiv^{\rm Nef}_{\RR}(X;C^0)^{\times d}\to\RR,\quad (\overline{D}_0;\overline{D}_1,\dots,\overline{D}_d)\mapsto\adeg(\overline{D}_0\cdots\overline{D}_d),
\]
having the property that, if $\overline{D}_0$ is pseudo-effective and $\overline{D}_1,\dots,\overline{D}_d$ are nef, then
\[
 \adeg(\overline{D}_0\cdots\overline{D}_d)\geq 0.
\]
\end{lemma}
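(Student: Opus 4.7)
The plan is to extend the intersection product to the first slot by first establishing the key identity
\[
\aDiv^{\rm Nef}_{\RR}(X;C^0)=\aDiv_{\RR}(X;C^0),
\]
and then invoking the multilinearity of the product already defined on $\aDiv^{\rm Nef}_{\RR}(X;C^0)^{\times(d+1)}$. Assuming this identity, any $\overline{D}_0\in\aDiv_{\RR}(X;C^0)$ admits a decomposition $\overline{D}_0=\overline{P}-\overline{N}$ with $\overline{P},\overline{N}\in\aNef_{\RR}(X;C^0)$, and one sets
\[
\adeg(\overline{D}_0\cdot\overline{D}_1\cdots\overline{D}_d):=\adeg(\overline{P}\cdot\overline{D}_1\cdots\overline{D}_d)-\adeg(\overline{N}\cdot\overline{D}_1\cdots\overline{D}_d).
\]
If $\overline{D}_0=\overline{P}'-\overline{N}'$ is a second such decomposition, then $\overline{P}+\overline{N}'=\overline{P}'+\overline{N}$ inside $\aNef_{\RR}(X;C^0)$, and the multilinearity of $\adeg$ on $\aDiv^{\rm Nef}_{\RR}(X;C^0)^{\times(d+1)}$ forces the two definitions to agree. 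Multilinearity of the extension in every slot and its uniqueness then follow formally.

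To establish the identity, I would fix an ample $C^\infty$-type arithmetic divisor $\overline{H}=(H,g_H)$ with $\omega(\overline{H})>0$ pointwise on $X(\CC)$ and $h_{\overline{H}}>0$ on all algebraic points. Given $\overline{D}_0=(D_0,g_{D_0})$, for $n$ sufficiently large $nH\pm D_0$ is ample as an $\RR$-divisor, and it remains to verify the nefness of $\overline{D}_0+n\overline{H}$ in the arithmetic sense. The height condition is immediate from strict positivity of $h_{\overline{H}}$; the plurisubharmonicity of $g_{D_0}+ng_H$ is the genuinely analytic step, where I would invoke the regularization results of Blocki-Kolodziej \cite{Blocki_Kolo07} and Moriwaki \cite[Theorem~4.6]{Moriwaki12a} to dominate the distributional curvature of $g_{D_0}$ by a large multiple of $\omega(\overline{H})$. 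If $X$ is not generically smooth, one passes to a normalized generic resolution of singularities as in Lemma~\ref{lem:aint}.

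For the non-negativity assertion, I would argue by continuity. If $\overline{D}_0$ is pseudo-effective and $\overline{H}$ is any ample arithmetic $\RR$-divisor, then $\overline{D}_0+\varepsilon\overline{H}$ is big for every $\varepsilon>0$, and continuity of $\adeg$ (equation~(2.1)) in the first slot yields
\[
\adeg(\overline{D}_0\cdot\overline{D}_1\cdots\overline{D}_d)=\lim_{\varepsilon\to 0^+}\adeg\bigl((\overline{D}_0+\varepsilon\overline{H})\cdot\overline{D}_1\cdots\overline{D}_d\bigr).
\]
This reduces matters to the case of big $\overline{D}_0$, where an arithmetic Kodaira lemma writes $\overline{D}_0=\overline{A}+\overline{E}$ with $\overline{A}$ ample and $\overline{E}$ effective; the ample contribution is non-negative by Lemma~\ref{lem:aint2}(2), and the effective contribution is handled by a further approximation with small ample perturbations, again using Lemma~\ref{lem:aint2}(2) and continuity.

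The main obstacle is the first step: decomposing an arbitrary $C^0$-type arithmetic $\RR$-divisor as a difference of two nef ones. The analytic difficulty is that the curvature current of a merely continuous Green function need not be a bounded form and so cannot be pointwise dominated by smooth positive curvature, which is precisely why the Blocki-Kolodziej-type regularization of continuous plurisubharmonic Green functions is needed. Once this decomposition is secured, everything else is formal: an extension-by-linearity argument, combined with Lemma~\ref{lem:aint2} and the continuity of $\adeg$.
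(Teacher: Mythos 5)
Your proposed key identity $\aDiv^{\rm Nef}_{\RR}(X;C^0)=\aDiv_{\RR}(X;C^0)$ is false, and this is the gap that sinks the argument. An arithmetic $\RR$-divisor lies in $\aDiv^{\rm Nef}_{\RR}(X;C^0)$ precisely when it is a difference of arithmetic $\RR$-divisors with \emph{continuous plurisubharmonic} Green functions, but not every continuous Green function decomposes this way: the curvature current $dd^c g_{D_0}$ of a merely $C^0$ function can be a distribution of positive order, not a signed measure, and then no finite multiple $n\,\omega(\overline H)$ of a smooth K\"ahler form can dominate it. (Already in one real variable, a continuous function like $x^2\sin(1/x^2)$ is not a difference of convex functions.) The Blocki--Kolodziej theorem and \cite[Theorem~4.6]{Moriwaki12a} that you cite are \emph{regularization} results: they approximate a continuous plurisubharmonic function uniformly by smooth plurisubharmonic ones. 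They say nothing about writing an arbitrary continuous function as a difference of two plurisubharmonic ones, which is the step you actually need. You acknowledge this obstacle in your final paragraph, but the tools you invoke do not overcome it.

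The paper sidesteps the false identity entirely. Instead of trying to realize $\overline{D}_0$ itself as a difference of nef divisors, it approximates $\overline{D}_0$ in the sup-norm by $\overline{D}_0+(0,f_n)$ with $f_n\in C^0(X)$, $\|f_n\|_{\sup}\to 0$, chosen so that $\overline{D}_0+(0,f_n)$ is of $C^\infty$-type, hence lies in $\aDiv_{\RR}^{\rm Nef}(X;C^0)$ because a \emph{smooth} curvature form is a bounded $(1,1)$-form and can be dominated by a multiple of a K\"ahler form. The Cauchy estimate $|\adeg((0,f_i-f_j)\cdot\overline{D}_1\cdots\overline{D}_d)|\le\frac12\deg(A_{\QQ}^{\cdot d})\|f_i-f_j\|_{\sup}$ (from Lemma~\ref{lem:aint2}(1) together with a bound on $\deg(D_{1,\QQ}\cdots D_{d,\QQ})$) shows the resulting sequence of intersection numbers converges, independently of the choice of $(f_n)$. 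Non-negativity then comes from choosing $f_n\ge 0$, so that $\overline{D}_0+(0,f_n)$ is both integrable and pseudo-effective, and applying Lemma~\ref{lem:aint2}(2) before passing to the limit. Your own non-negativity argument has a related defect: you apply Lemma~\ref{lem:aint2}(2) to the effective piece $\overline E$ of an arithmetic Kodaira decomposition, but that lemma requires $\overline E$ to be integrable, and an effective arithmetic $\RR$-divisor of $C^0$-type need not be. The fix in both places is the same: approximate by $C^\infty$-type divisors, which \emph{are} integrable, and pass to the limit using the sup-norm estimate.
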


\begin{remark}
By the multilinearity, the above map is continuous in the sense that
\[
 \lim_{\varepsilon_1\to 0,\dots,\varepsilon_r\to 0}\adeg\left(\left(\overline{D}_0+\sum_{i=1}^r\varepsilon_i\overline{E}_i\right)\cdot\overline{D}_1\cdots\overline{D}_d\right)=\adeg(\overline{D}_0\cdots\overline{D}_d)
\]
for any arithmetic $\RR$-divisors $\overline{E}_1,\dots,\overline{E}_r$.
\end{remark}

\begin{proof}
We can assume that $X$ is generically smooth.
First, we assume that $\overline{D}_1,\dots,\overline{D}_d$ are nef.
We take a sequence of continuous functions $(f_n)_{n\geq 1}\subseteq C^0(X)$ such that $\|f_n\|_{\sup}\to 0$ as $n\to\infty$ and $\overline{D}_0+(0,f_n)\in\aDiv_{\RR}(X;C^{\infty})\subseteq\aDiv_{\RR}^{\rm Nef}(X;C^0)$ (in particular, $f_i-f_j$ is $C^{\infty}$ for every $i,j$).
Fix a nef and big $\RR$-divisor $A_{\QQ}$ such that $A_{\QQ}-D_{i,\QQ}$ are all pseudo-effective.
Since
\begin{align*}
 &|\adeg((\overline{D}_0+(0,f_i))\cdot\overline{D}_1\cdots\overline{D}_d)-\adeg((\overline{D}_0+(0,f_j))\cdot\overline{D}_1\cdots\overline{D}_d)|\\
 & \qquad\qquad =|\adeg((0,f_i-f_j)\cdot\overline{D}_1\cdots\overline{D}_d)|\leq\frac{1}{2}\deg(A_{\QQ}^{\cdot d})\cdot\|f_i-f_j\|_{\sup},
\end{align*}
the sequence $\left(\adeg((\overline{D}_0+(0,f_n))\cdot\overline{D}_1\cdots \overline{D}_{d}\right)_{n\geq 1}$ is a Cauchy sequence.
We set
\[
 \adeg(\overline{D}_0\cdot\overline{D}_{1}\cdots\overline{D}_d):=\lim_{n\to\infty}\adeg((\overline{D}_0+(0,f_n))\cdot\overline{D}_{1}\cdots\overline{D}_d),
\]
which does not depend on the choice of $(f_n)_{n\geq 1}$.
In general, we extend the map to $\aDiv_{\RR}(X;C^0)\times\aDiv^{\rm Nef}_{\RR}(X;C^0)^{\times d}\to\RR$ by using the multilinearity.

For the non-negativity, we choose the sequence $(f_n)_{n\geq 1}$ having the additional property that $f_n\geq 0$ for all $n$.
Then, by definition and Lemma~\ref{lem:aint2} (2), we have
\[
 \adeg(\overline{D}_0\cdot\overline{D}_{1}\cdots\overline{D}_d)=\lim_{n\to\infty}\adeg((\overline{D}_0+(0,f_n))\cdot\overline{D}_{1}\cdots\overline{D}_d)\geq 0.
\]
\end{proof}

The following is a version of the arithmetic Hodge index theorem (see \cite{Faltings84, Hriljac85, Moriwaki96, Yuan09, Yuan_Zhang13}).
The case where $\overline{H}=\overline{H}_1=\cdots=\overline{H}_{d-1}$ was treated by Yuan \cite{Yuan09}.

\begin{theorem}\label{thm:aHodge}
Let $X$ be a normal projective arithmetic variety of dimension $d+1$, and let $\overline{H}$, $\overline{H}_1,\dots,\overline{H}_{d-1}$ be nef arithmetic $\RR$-divisors on $X$.
Let $\overline{D}$ be an integrable arithmetic $\RR$-divisor on $X$.
\begin{enumerate}
\item[\textup{(1)}] Suppose that $H_{1,\QQ},\dots,H_{d-1,\QQ}$ are all big.
If $\deg(D_{\QQ}\cdot H_{1,\QQ}\cdots H_{d-1,\QQ})=0$, then $\adeg(\overline{D}^{\cdot 2}\cdot\overline{H}_1\cdots\overline{H}_{d-1})\leq 0$.
\item[\textup{(2)}] Suppose that $H_{\QQ},H_{1,\QQ},\dots,H_{d-1,\QQ}$ are all big.
If $\adeg(\overline{D}\cdot\overline{H}\cdot\overline{H}_1\cdots\overline{H}_{d-1})=0$, then $\adeg(\overline{D}^{\cdot 2}\cdot\overline{H}_1\cdots\overline{H}_{d-1})\leq 0$.
\end{enumerate}
\end{theorem}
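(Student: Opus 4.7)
The plan is to reduce (2) to (1) by a shift argument, and then to prove (1) by approximation from general nef $C^0$-divisors down to ample $C^\infty$ $\QQ$-divisors, followed by a polarization that uses Yuan's equal-case result \cite{Yuan09} as the base case.

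\textbf{Reduction of (2) to (1).} The assumption that $H_\QQ, H_{1,\QQ}, \ldots, H_{d-1,\QQ}$ are nef and big, combined with the classical Khovanskii--Teissier inequality on $X_\QQ$, gives $\deg(H_\QQ \cdot H_{1,\QQ} \cdots H_{d-1,\QQ}) > 0$. Set
\[
 c := \frac{\deg(D_\QQ \cdot H_{1,\QQ} \cdots H_{d-1,\QQ})}{\deg(H_\QQ \cdot H_{1,\QQ} \cdots H_{d-1,\QQ})}, \qquad \overline{D}' := \overline{D} - c\,\overline{H}.
\]
Then $\overline{D}'$ is integrable and satisfies the geometric orthogonality hypothesis of (1). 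Granting (1) gives $\adeg({\overline{D}'}^{\cdot 2} \cdot \overline{H}_1 \cdots \overline{H}_{d-1}) \leq 0$; expanding by multilinearity, the cross term $-2c\,\adeg(\overline{D} \cdot \overline{H} \cdot \overline{H}_1 \cdots \overline{H}_{d-1})$ vanishes by the hypothesis of (2), and $\adeg(\overline{H}^{\cdot 2} \cdot \overline{H}_1 \cdots \overline{H}_{d-1}) \geq 0$ by nefness (Lemma~\ref{lem:aint2}(2)), so
\[
 \adeg(\overline{D}^{\cdot 2} \cdot \overline{H}_1 \cdots \overline{H}_{d-1}) \leq -c^2\,\adeg(\overline{H}^{\cdot 2} \cdot \overline{H}_1 \cdots \overline{H}_{d-1}) \leq 0.
\]

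\textbf{Reduction of (1) to the smooth ample $\QQ$-case.} Using the continuity of the arithmetic intersection product together with Blocki--Kolodziej smoothing of plurisubharmonic Green functions (as invoked in Lemma~\ref{lem:aintkiso}), every nef arithmetic $\RR$-divisor with $C^0$-Green function is a limit of ample $\QQ$-divisors with $C^\infty$-Green functions. After such a perturbation of the $\overline{H}_i$, the geometric orthogonality $\deg(D_\QQ \cdot H_{1,\QQ} \cdots H_{d-1,\QQ}) = 0$ is restored exactly by subtracting a small multiple of a fixed ample arithmetic divisor from $\overline{D}$, and the inequality passes to the limit by continuity. So we may assume each $\overline{H}_i$ is ample and lies in $\aDiv_\QQ(X; C^\infty)$.

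\textbf{Polarization from Yuan's equal case.} I would then induct on the number $k$ of distinct members of $\{\overline{H}_1, \ldots, \overline{H}_{d-1}\}$, the case $k = 1$ being Yuan's Hodge index theorem. For the induction step, assume $\overline{H}_1 \neq \overline{H}_2$ and consider the family $\overline{L}(t) := (1-t)\overline{H}_1 + t\overline{H}_2$, together with a shift $\overline{D}(t) := \overline{D} - c(t)\overline{L}(t)$ chosen so that $\overline{D}(t)$ is geometrically primitive with respect to the sequence $(\overline{L}(t), \overline{L}(t), \overline{H}_3, \ldots, \overline{H}_{d-1})$. The induction hypothesis then gives $\adeg(\overline{D}(t)^{\cdot 2} \cdot \overline{L}(t)^{\cdot 2} \cdot \overline{H}_3 \cdots \overline{H}_{d-1}) \leq 0$ for $t \in (0,1)$; expanding as a polynomial in $t$ and isolating the coefficient of $t(1-t)$ (which corresponds to the cross term $\overline{H}_1 \overline{H}_2$) yields the mixed inequality for $\overline{H}_1 \overline{H}_2 \overline{H}_3 \cdots \overline{H}_{d-1}$.

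\textbf{Main obstacle.} The hard part is executing this polarization cleanly: the geometric primitivity is a single scalar condition while the polynomial identity after expansion mixes several intersection numbers, and since $c(t)$ is a rational function of $t$, isolating the desired cross coefficient is not straightforward. One must combine the induction hypothesis at several values of $t$ with the endpoint cases $t \in \{0,1\}$ (which are themselves instances of the inductive hypothesis with one fewer distinct divisor). An alternative route, avoiding iterated polarization, would be a secondary induction on $d$ via arithmetic Bertini hyperplane sections, with base case $d = 1$ handled by the Faltings--Hriljac--Moriwaki Hodge index theorem on arithmetic surfaces.
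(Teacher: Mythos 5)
Your reduction of (2) to (1) is correct and is essentially identical to the paper's: set $c=\deg(D_{\QQ}\cdot H_{1,\QQ}\cdots H_{d-1,\QQ})/\deg(H_{\QQ}\cdot H_{1,\QQ}\cdots H_{d-1,\QQ})$, apply (1) to $\overline{D}-c\overline{H}$, expand, kill the cross term by hypothesis, and drop the nonnegative square term. The first approximation step (reducing to $\overline{H}_i\in\aNef_{\QQ}(X;C^0)$) also matches the paper's strategy. The problem is in the core of (1).

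The genuine gap is your ``polarization from Yuan's equal case'' step. You are trying to re-derive the mixed-polarization arithmetic Hodge index inequality from Yuan's diagonal case $\overline{H}_1=\cdots=\overline{H}_{d-1}$, and you correctly identify why this is not straightforward: the primitivity shift $c(t)$ is a rational (not polynomial) function of $t$, and $\overline{D}(t)$ itself depends on $t$, so expanding $\adeg(\overline{D}(t)^{\cdot 2}\cdot\overline{L}(t)^{\cdot 2}\cdot\overline{H}_3\cdots)\leq 0$ in $t$ does not isolate a single cross coefficient $\adeg(\overline{D}^{\cdot 2}\cdot\overline{H}_1\cdot\overline{H}_2\cdots)$ in any clean way. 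You acknowledge this as the ``main obstacle'' and do not resolve it. The paper does not attempt this at all: it takes the mixed case for nef arithmetic $\QQ$-divisors of $C^0$-type directly as an input from Yuan--Zhang's arithmetic Hodge index theorem \cite{Yuan_Zhang13}, which already covers arbitrary (not all-equal) nef $\overline{H}_1,\dots,\overline{H}_{d-1}$. In other words, the theorem you are trying to rederive by polarization is already a black box in the literature, and the paper simply quotes it.

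There is a second, smaller omission: after you reduce to rational ample $\overline{H}_i$, the divisor $\overline{D}$ is still an arbitrary integrable arithmetic $\RR$-divisor, while the Yuan--Zhang input applies to $\QQ$-divisors. The paper handles this by writing $\overline{D}=a_1\overline{D}_1+\dots+a_l\overline{D}_l$ with $a_1,\dots,a_l$ $\QQ$-linearly independent, deducing $\deg(D_{i,\QQ}\cdot H_{1,\QQ}\cdots H_{d-1,\QQ})=0$ for each $i$ from the single relation $\sum_i a_i\deg(D_{i,\QQ}\cdot H_{1,\QQ}\cdots H_{d-1,\QQ})=0$ and the rationality of each summand, and then passing to arbitrary $\QQ$-combinations $\sum b_i\overline{D}_i$ before taking a continuity limit. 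Your sketch does not address this $\QQ$-to-$\RR$ passage on the $\overline{D}$ side. In short, replace the polarization plan by a direct citation of Yuan--Zhang and add the $\QQ$-linear-independence decomposition of $\overline{D}$; then your two-step approximation of the $\overline{H}_i$ and your reduction of (2) to (1) will close the argument.
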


\begin{remark}
There are many results in the literature on the equality conditions for Theorem~\ref{thm:aHodge} (1) (see \cite{Moriwaki96, Moriwaki10b}).
For example we can say that, if all $\overline{H}_i$ are ample and rational and if the equality holds in (1), then $D_{\QQ}$ is an $\RR$-linear combination of principal divisors on $X_{\QQ}$.
One can find a more precise equality condition for the above inequalities in Yuan-Zhang \cite[Theorem~1.3]{Yuan_Zhang13}.
In the following arguments, we do not use these equality conditions at least explicitly (but implicitly use in the proof of the general Dirichlet unit theorem \cite{Moriwaki10b}).
\end{remark}

\begin{proof}
This follows from Yuan-Zhang's version of the arithmetic Hodge index theorem \cite{Yuan_Zhang13}.
We may assume that $X$ is generically smooth.
Let $O_K:=\Hz(X,\mathcal{O}_X)$, where $K$ is an algebraic number field.

(1): First, we assume that $\overline{H}_1,\dots\overline{H}_{d-1}\in\aNef_{\QQ}(X;C^0)$.
We can find $\overline{D}_1,\dots,\overline{D}_l\in\aDiv(X;C^0)$ and $a_1,\dots,a_l\in\RR$ such that $a_1,\dots,a_l$ are linearly independent over $\QQ$ and
\[
 \overline{D}=a_1\overline{D}_1+\dots+a_l\overline{D}_l.
\]
Since $\sum_ia_i\deg(D_{i,\QQ}\cdot H_{1,\QQ}\cdots H_{d-1,\QQ})=0$ and $\deg(D_{i,\QQ}\cdot H_{1,\QQ}\cdots H_{d-1,\QQ})\in\QQ$, we have $\deg(D_{i,\QQ}\cdot H_{1,\QQ}\cdots H_{d-1,\QQ})=0$ for all $i$.
By Yuan-Zhang \cite{Yuan_Zhang13}, for any $\overline{E}\in\aDiv_{\QQ}^{\rm Nef}(X;C^0)$, if $\deg(E_{\QQ}\cdot H_{1,\QQ}\cdots H_{d-1,\QQ})=[K:\QQ]\deg(E_K\cdot H_{1,K}\cdots H_{d-1,K})=0$, then we have $\adeg(\overline{E}^{\cdot 2}\cdot\overline{H}_1\cdots\overline{H}_{d-1})\leq 0$.
Thus, we have
\[
 \adeg((b_1\overline{D}_1+\dots+b_l\overline{D}_l)^{\cdot 2}\cdot\overline{H}_1\cdots\overline{H}_{d-1})\leq 0
\]
for all $b_1,\dots,b_l\in\QQ$.
Therefore, we have $\adeg(\overline{D}^{\cdot 2}\cdot\overline{H}_1\cdots\overline{H}_{d-1})\leq 0$ by continuity.

Next, we fix an ample arithmetic divisor $\overline{A}$.
For each $i=1,\dots,d-1$, there exists a sequence of nef arithmetic $\RR$-divisors $(\overline{A}_i^{(j)})_{j=1}^{\infty}$ contained in a finite dimensional $\RR$-subspace $V$ of $\aDiv_{\RR}(X;C^0)$ such that $\overline{A}_i^{(j)}\to 0$ in $V$ as $j\to\infty$ and $\overline{H}_i^{(j)}:=\overline{H}_i+\overline{A}_i^{(j)}$ is rational for $j=1,2,\dots$.
Set
\[
 \varepsilon_j:=-\frac{\deg(D_{\QQ}\cdot H_{1,\QQ}^{(j)}\cdots H_{d-1,\QQ}^{(j)})}{\deg(A_{\QQ}\cdot H_{1,\QQ}^{(j)}\cdots H_{d-1,\QQ}^{(j)})}\in\RR
\]
for $j=1,2,\dots$.
Since $\deg((D_{\QQ}+\varepsilon_jA_{\QQ})\cdot H_{1,\QQ}^{(j)}\cdots H_{d-1,\QQ}^{(j)})=0$ and $\overline{H}_i^{(j)}\in\aNef_{\QQ}(X;C^0)$, we have
\[
 \adeg((\overline{D}+\varepsilon_j\overline{A})^{\cdot 2}\cdot\overline{H}_1^{(j)}\cdots\overline{H}_{d-1}^{(j)})\leq 0.
\]
As $j\to\infty$, we have $\overline{H}_i^{(j)}\to\overline{H}_i$ and
\[
 \varepsilon_j\to -\frac{\deg(D_{\QQ}\cdot H_{1,\QQ}\cdots H_{d-1,\QQ})}{\deg(A_{\QQ}\cdot H_{1,\QQ}\cdots H_{d-1,\QQ})}=0.
\]
Note that there exists a positive $N>0$ such that $\deg(A_{\QQ}\cdot H_{1,\QQ}\cdots H_{d-1,\QQ})\geq N\deg(A_{\QQ}^{\cdot d})>0$ since $H_{i,\QQ}$'s are all big.
Hence we have
\[
 \adeg(\overline{D}^{\cdot 2}\cdot\overline{H}_1\cdots\overline{H}_{d-1})\leq 0
\]
by continuity.

(2): Set $t:=\deg(D_{\QQ}\cdot H_{1,\QQ}\cdots H_{d-1,\QQ})/\deg(H_{\QQ}\cdot H_{1,\QQ}\cdots H_{d-1,\QQ})\in\RR$.
Since $\deg((D_{\QQ}-tH_{\QQ})\cdot H_{1,\QQ}\cdots H_{d-1,\QQ})=0$, we have
\begin{align*}
 &\adeg((\overline{D}-t\overline{H})^{\cdot 2}\cdot\overline{H}_1\cdots\overline{H}_{d-1})\\
 &\qquad\qquad=\adeg(\overline{D}^{\cdot 2}\cdot\overline{H}_1\cdots\overline{H}_{d-1})+t^2\adeg(\overline{H}^{\cdot 2}\cdot\overline{H}_1\cdots\overline{H}_{d-1})\leq 0.
\end{align*}
This means that $\adeg(\overline{D}^{\cdot 2}\cdot\overline{H}_1\cdots\overline{H}_{d-1})\leq 0$.
\end{proof}

The following series of inequalities is a formal consequence of Theorem~\ref{thm:aHodge} (see \cite[\S 1.6]{LazarsfeldI} for the original Khovanskii-Teissier inequalities in the context of algebraic geometry).

\begin{theorem}\label{thm:aKT}
Let $\overline{D},\overline{E},\overline{H}_0,\dots,\overline{H}_d\in\aNef_{\RR}(X;C^0)$.
\begin{enumerate}
\item[\textup{(1)}] $\adeg(\overline{D}\cdot\overline{E}\cdot\overline{H}_2\cdots\overline{H}_d)^2\geq\adeg(\overline{D}^{\cdot 2}\cdot\overline{H}_2\cdots\overline{H}_d)\cdot\adeg(\overline{E}^{\cdot 2}\cdot\overline{H}_2\cdots\overline{H}_d)$.
\item[\textup{(2)}] For any $k$ with $1\leq k\leq d+1$ and for any $i$ with $0\leq i\leq k$, we have
\[
 \adeg(\overline{D}^{\cdot i}\cdot\overline{E}^{\cdot (k-i)}\cdot\overline{H}_k\cdots\overline{H}_d)^k\geq\adeg(\overline{D}^{\cdot k}\cdot\overline{H}_k\cdots\overline{H}_d)^i\cdot\adeg(\overline{E}^{\cdot k}\cdot\overline{H}_k\cdots\overline{H}_d)^{k-i}.
\]
\item[\textup{(3)}] For any $k$ with $1\leq k\leq d+1$, we have
\[
 \adeg(\overline{H}_0\cdots\overline{H}_d)^k\geq\prod_{i=0}^{k-1}\adeg(\overline{H}_i^{\cdot k}\cdot\overline{H}_k\cdots\overline{H}_d).
\]
\item[\textup{(4)}] For any $k$ with $1\leq k\leq d+1$, we have
\[
 \adeg((\overline{D}+\overline{E})^{\cdot k}\cdot\overline{H}_k\cdots\overline{H}_d)^{1/k}\geq\adeg(\overline{D}^{\cdot k}\cdot\overline{H}_k\cdots\overline{H}_d)^{1/k}+\adeg(\overline{E}^{\cdot k}\cdot\overline{H}_k\cdots\overline{H}_d)^{1/k}.
\]
\end{enumerate}
\end{theorem}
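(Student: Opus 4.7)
The plan is to deduce (1)--(4) in sequence from the arithmetic Hodge index theorem (Theorem~\ref{thm:aHodge}), following the classical pattern in convex and algebraic geometry (\emph{cf.}~\cite[\S 1.6]{LazarsfeldI}). Only (1) uses Theorem~\ref{thm:aHodge} directly; the remaining inequalities are purely formal consequences of (1), together with the multilinearity and continuity of the arithmetic intersection pairing recorded in Lemma~\ref{lem:aintkiso}.

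For (1), I would use the classical Cauchy--Schwarz trick. By adding small ample perturbations $\varepsilon\overline{A}$ to $\overline{E}$ and to each $\overline{H}_j$ for $j=2,\dots,d$ and letting $\varepsilon\to 0$, I may reduce to the case where $\overline{E},\overline{H}_2,\dots,\overline{H}_d$ are nef with big generic fibers; the continuity of $\adeg$ then recovers the general nef case. Set $\alpha:=\adeg(\overline{D}\cdot\overline{E}\cdot\overline{H}_2\cdots\overline{H}_d)$ and $\beta:=\adeg(\overline{E}^{\cdot 2}\cdot\overline{H}_2\cdots\overline{H}_d)$. If $\beta=0$ then (1) reduces to $\alpha^2\geq 0$ and is automatic. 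Otherwise put $t:=\alpha/\beta$ and $\overline{F}:=\overline{D}-t\overline{E}$, so that $\adeg(\overline{F}\cdot\overline{E}\cdot\overline{H}_2\cdots\overline{H}_d)=0$. Theorem~\ref{thm:aHodge}(2) with $\overline{H}:=\overline{E}$ then forces $\adeg(\overline{F}^{\cdot 2}\cdot\overline{H}_2\cdots\overline{H}_d)\leq 0$; expanding $\overline{F}^{\cdot 2}$ and substituting $t=\alpha/\beta$ yields (1).

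Part (2) follows from (1) by iterated Cauchy--Schwarz. Fix $k$ and set $a_j:=\adeg(\overline{D}^{\cdot j}\cdot\overline{E}^{\cdot(k-j)}\cdot\overline{H}_k\cdots\overline{H}_d)$ for $0\leq j\leq k$. Applying (1) with the $(\overline{H}_2,\dots,\overline{H}_d)$-slot filled by the nef intersection $\overline{D}^{\cdot(j-1)}\cdot\overline{E}^{\cdot(k-j-1)}\cdot\overline{H}_k\cdots\overline{H}_d$ (valid by multilinearity) yields the Newton-type log-concavity $a_j^2\geq a_{j-1}a_{j+1}$ for $1\leq j\leq k-1$; telescoping these inequalities gives $a_i^k\geq a_0^{k-i}a_k^i$, which is (2). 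Part (3) follows by induction on $k$ from iterated application of (1); the combinatorial bookkeeping, reducing a fully mixed intersection to a product of pure self-intersections by repeated Alexandrov--Fenchel-type bounding, is identical to the classical Teissier argument (\emph{cf.}~\cite[Theorem~1.6.1]{LazarsfeldI}). Finally, (4) is a direct consequence of (2): expanding $\adeg((\overline{D}+\overline{E})^{\cdot k}\cdot\overline{H}_k\cdots\overline{H}_d)=\sum_{i=0}^k\binom{k}{i}a_i$ by multilinearity, bounding each $a_i\geq a_0^{(k-i)/k}a_k^{i/k}$ via (2), and using the binomial theorem to identify the sum with $(a_0^{1/k}+a_k^{1/k})^k$ gives (4) at once.

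The main technical obstacle lies in the perturbation step in (1): to invoke Theorem~\ref{thm:aHodge} one needs $\overline{E}$ and the $\overline{H}_j$'s to have big generic fibers, so the general nef case must be reached via ample perturbations followed by a continuity argument using Lemma~\ref{lem:aintkiso}. Once (1) is established, (2)--(4) are formal consequences.
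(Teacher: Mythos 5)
Your proof is correct, and the overall strategy -- establish (1) from the arithmetic Hodge index theorem via the Cauchy--Schwarz trick, then deduce (2)--(4) formally -- is the same as the paper's. There is, however, a genuine divergence in how you obtain (2), and one small caveat you should make explicit.

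For (1) you set $\overline{F}=\overline{D}-(\alpha/\beta)\overline{E}$; the paper uses the scaled version $\overline{F}=\beta\overline{D}-\alpha\overline{E}$, which is the same argument up to a positive scalar. For (2) the paper runs a rather intricate single induction on $k$: it first invokes the inductive hypothesis at level $k-1$ to get a bound involving $\adeg(\overline{D}^{\cdot(k-1)}\cdot\overline{E}\cdot\overline{H}_k\cdots\overline{H}_d)$, then invokes (1) and the inductive hypothesis a second time, and multiplies the two displayed inequalities. Your route is different and cleaner: you apply (1) once with the $\overline{H}$-slots filled by $\overline{D}^{\cdot(j-1)}\cdot\overline{E}^{\cdot(k-j-1)}\cdot\overline{H}_k\cdots\overline{H}_d$ to get the three-term log-concavity $a_j^2\geq a_{j-1}a_{j+1}$, and then telescope. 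This is precisely what the paper records in the Remark after the theorem as a \emph{consequence} of (1), but it is not how the paper proves (2). Your version avoids the double appeal to the inductive hypothesis and is, in my view, the more transparent argument. Parts (3) and (4) are treated essentially as the paper does (the paper gives the explicit inductive computation for (3), which you wave at as ``the classical Teissier argument''; that is fine but could be written out in two lines using (2)).

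One point worth tightening: you present the ample perturbation plus continuity step as a technical device needed only for (1), to ensure the generic-fiber bigness hypothesis of Theorem~\ref{thm:aHodge}. But your telescoping step in (2) requires all the $a_j$ to be strictly positive (you implicitly divide by them, or equivalently take logarithms), and the same positivity is needed in (3) and when you write $a_i\geq a_0^{(k-i)/k}a_k^{i/k}$ in (4). The paper addresses this once and for all at the start of the proof: ``By adding a nef and big arithmetic $\RR$-divisor, we can assume that $\overline{D},\overline{E},\overline{H}_0,\dots,\overline{H}_d$ are all nef and big, and every arithmetic intersection number appearing below is positive.'' You should make the same global reduction (or note that if some $a_j$ vanishes, the asserted inequality degenerates and can be handled directly), rather than confining the perturbation to (1). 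This is an expository gap, not a mathematical error, since the continuity of $\adeg$ on $\aDiv^{\rm Nef}_{\RR}(X;C^0)^{\times(d+1)}$ lets the inequalities pass to the limit.
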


\begin{remark}
By Theorem~\ref{thm:aKT} (1), we can see that the function $i\mapsto\log\adeg(\overline{D}^{\cdot i}\cdot\overline{E}^{\cdot (d-i+1)})$ is concave: that is, for any $i$ with $1\leq i\leq d$, we have
\[
 \adeg(\overline{D}^{\cdot i}\cdot\overline{E}^{\cdot (d-i+1)})^2\geq\adeg(\overline{D}^{\cdot (i-1)}\cdot\overline{E}^{\cdot (d-i+2)})\cdot\adeg(\overline{D}^{\cdot (i+1)}\cdot\overline{E}^{\cdot (d-i)}).
\]
\end{remark}

\begin{proof}
By adding a nef and big arithmetic $\RR$-divisor, we can assume that $\overline{D},\overline{E},\overline{H}_0,\dots,\overline{H}_d$ are all nef and big, and every arithmetic intersection number appearing below is positive.

(1): Set $\overline{F}:=\adeg(\overline{E}^{\cdot 2}\cdot\overline{H}_2\cdots\overline{H}_d)\overline{D}-\adeg(\overline{D}\cdot\overline{E}\cdot\overline{H}_2\cdots\overline{H}_d)\overline{E}\in\aDiv_{\RR}^{\rm Nef}(X;C^0)$.
Since $\adeg(\overline{F}\cdot\overline{E}\cdot\overline{H}_1\cdots\overline{H}_d)=0$, we have $\adeg(\overline{F}^{\cdot 2}\cdot\overline{H}_2\cdots\overline{H}_d)\leq 0$ by Theorem~\ref{thm:aHodge} (2).
This means that
\[
 \adeg(\overline{D}^{\cdot 2}\cdot\overline{H}_2\cdots\overline{H}_d)\cdot\adeg(\overline{E}^{\cdot 2}\cdot\overline{H}_2\cdots\overline{H}_d)\leq \adeg(\overline{D}\cdot\overline{E}\cdot\overline{H}_2\cdots\overline{H}_d)^2.
\]

(2): We prove the assertion by induction on $k$.
If $k=2$, then the assertion is nothing but (1).
In general, we may assume that $1\leq i\leq k-1$.
We have
\begin{align}
 &\adeg(\overline{D}^{\cdot i}\cdot\overline{E}^{\cdot (k-i)}\cdot\overline{H}_k\cdots\overline{H}_d) \label{eqn:aKT1}\\
 &\quad\geq\adeg(\overline{D}^{\cdot (k-1)}\cdot\overline{E}\cdot\overline{H}_k\cdots\overline{H}_d)^{i/(k-1)}\cdot\adeg(\overline{E}^{\cdot k}\cdot\overline{H}_k\cdots\overline{H}_d)^{(k-i-1)/(k-1)} \nonumber
\end{align}
(by the induction hypothesis) and
\begin{align}
 &\adeg(\overline{D}^{\cdot {(k-1)}}\cdot\overline{E}\cdot\overline{H}_k\cdots\overline{H}_d)^{2i/k} \label{eqn:aKT2}\\
 &\qquad\qquad\geq\adeg(\overline{D}^{\cdot k}\cdot\overline{H}_k\cdots\overline{H}_d)^{i/k}\cdot\adeg(\overline{D}^{\cdot (k-2)}\cdot\overline{E}^{\cdot 2}\cdot\overline{H}_k\cdots\overline{H}_d)^{i/k} \nonumber\\
 &\qquad\qquad\geq\adeg(\overline{D}^{\cdot k}\cdot\overline{H}_k\cdots\overline{H}_d)^{i/k}\cdot\adeg(\overline{E}^{\cdot k}\cdot\overline{H}_k\cdots\overline{H}_d)^{i/k(k-1)} \nonumber \\
 &\qquad\qquad\qquad\times\adeg(\overline{D}^{\cdot {(k-1)}}\cdot\overline{E}\cdot\overline{H}_k\cdots\overline{H}_d)^{i(k-2)/k(k-1)} \nonumber
\end{align}
(by using (1) for the first inequality and the induction hypothesis for the second).
By multiplying (\ref{eqn:aKT1}) by (\ref{eqn:aKT2}), we have
\[
 \adeg(\overline{D}^{\cdot i}\cdot\overline{E}^{\cdot (k-i)}\cdot\overline{H}_k\cdots\overline{H}_d)\geq\adeg(\overline{D}^{\cdot k}\cdot\overline{H}_k\cdots\overline{H}_d)^{i/k}\cdot\adeg(\overline{E}^{\cdot k}\cdot\overline{H}_k\cdots\overline{H}_d)^{(k-i)/k}.
\]
Note that the arithmetic intersection numbers we have considered are all assumed to be positive.

(3): We prove the assertion by induction on $k$.
If $k=2$, then the assertion is nothing but (1).
In general, we have
\begin{align*}
 \adeg(\overline{H}_0\cdots\overline{H}_d)&\geq\prod_{i=0}^{k-2}\adeg(\overline{H}_i^{\cdot (k-1)}\cdot\overline{H}_{k-1}\cdots\overline{H}_d)^{1/(k-1)}\\
 &\geq\prod_{i=0}^{k-2}\left(\adeg(\overline{H}_i^{\cdot k}\cdot\overline{H}_k\cdots\overline{H}_d)^{1/k}\cdot\adeg(\overline{H}_{k-1}^{\cdot k}\cdot\overline{H}_k\cdots\overline{H}_d)^{1/k(k-1)}\right)\\
 &=\prod_{i=0}^{k-1}\adeg(\overline{H}_i^{\cdot k}\cdot\overline{H}_k\cdots\overline{H}_d)^{1/k}
\end{align*}
by using (2).

(4): By (2), we have
\begin{align*}
 &\adeg((\overline{D}+\overline{E})^{\cdot k}\cdot\overline{H}_k\cdots\overline{H}_d)=\sum_{i=0}^k\binom{k}{i}\adeg(\overline{D}^{\cdot i}\cdot\overline{E}^{\cdot (k-i)}\cdot\overline{H}_k\cdots\overline{H}_d) \\
 &\qquad\geq\sum_{i=0}^k\binom{k}{i}\adeg(\overline{D}^{\cdot k}\cdot\overline{H}_k\cdots\overline{H}_d)^{i/k}\cdot\adeg(\overline{E}^{\cdot k}\cdot\overline{H}_k\cdots\overline{H}_d)^{(k-i)/k}\\
 &\qquad =\left(\adeg(\overline{D}^{\cdot k}\cdot\overline{H}_k\cdots\overline{H}_d)^{1/k}+\adeg(\overline{E}^{\cdot k}\cdot\overline{H}_k\cdots\overline{H}_d)^{1/k}\right)^k.
\end{align*}
\end{proof}

%%%
\section{Arithmetic positive intersection numbers}\label{sec:aposint}

Let $X$ be a normal projective arithmetic variety of dimension $d+1$, and let $\overline{D}$ be a big arithmetic $\RR$-divisor on $X$.
An \emph{approximation} of $\overline{D}$ is a pair $\overline{\mathcal{R}}:=(\varphi:X'\to X;\overline{M})$ consisting of a blowing up $\varphi:X'\to X$ and a nef arithmetic $\RR$-divisor $\overline{M}$ of $C^0$-type on $X'$ such that $X'$ is generically smooth and normal and $\overline{F}:=\varphi^*\overline{D}-\overline{M}$ is a pseudo-effective arithmetic $\RR$-divisor of $C^0$-type.
An approximation $(\varphi:X'\to X;\overline{M})$ of $\overline{D}$ is said to be \emph{admissible} if $\varphi^*\overline{D}-\overline{M}$ is an effective arithmetic $\QQ$-divisor of $C^0$-type.
Note that our terminology is slightly different from Chen's \cite[Definition~2]{Chen11}, which imposes the condition that $\overline{M}$ is semiample.
We denote the set of all approximations of $\overline{D}$ by $\widehat{\Theta}(\overline{D})$, and set
\begin{align*}
 &\widehat{\Theta}_{\rm ad}(\overline{D}):=\{\overline{\mathcal{R}}:=(\varphi:X'\to X;\overline{M})\in\widehat{\Theta}(\overline{D})\,|\,\text{$\overline{\mathcal{R}}$ is admissible}\},\\
 &\widehat{\Theta}_{C^{\infty}}(\overline{D}):=\{(\varphi:X'\to X;\overline{M})\in\widehat{\Theta}_{\rm ad}(\overline{D})\,|\,\text{$\overline{M}$ is $C^{\infty}$}\},\\
 &\widehat{\Theta}_{\rm amp}(\overline{D}):=\{(\varphi:X'\to X;\overline{M})\in\widehat{\Theta}_{C^{\infty}}(\overline{D})\,|\,\text{$\overline{M}$ is ample}\}.
\end{align*}
Let $n$ be an integer with $0\leq n\leq d$.
Let $\overline{D}_0,\dots,\overline{D}_n$ be big arithmetic $\RR$-divisors, $\overline{D}_{n+1},\dots,\overline{D}_d$ nef arithmetic $\RR$-divisors, and $\overline{\mathcal{R}}_i:=(\varphi_i:X_i'\to X;\overline{M}_i)\in\widehat{\Theta}(\overline{D}_i)$ for $i=0,\dots,n$.
We can choose a blow-up $\pi:X'\to X$ in such a way that $X'$ is generically smooth and normal and $\pi$ factors as $X'\xrightarrow{\psi_i}X_i'\xrightarrow{\varphi_i}X$ for each $i$.
Then we set
\begin{equation}
 \overline{\mathcal{R}}_0\cdots\overline{\mathcal{R}}_n\cdot\overline{D}_{n+1}\cdots\overline{D}_d:=\adeg(\psi_0^*\overline{M}_0\cdots\psi_n^*\overline{M}_n\cdot\pi^*\overline{D}_{n+1}\cdots\pi^*\overline{D}_d),
\end{equation}
which does not depend on the choice of $\pi:X'\to X$ by Lemma~\ref{lem:aint}.

\begin{proposition}\label{prop:defaposint1}
Suppose that $X$ is generically smooth and let $\overline{D}\in\aBigCone_{\RR}(X;C^0)$.
Let $\overline{D}=\overline{M}+\overline{F}$ be any decomposition such that $\overline{M}$ is a nef arithmetic $\RR$-divisor and that $\overline{F}$ is a pseudo-effective arithmetic $\RR$-divisor.
Let $\gamma$ be a real number with $0<\gamma<1$.
Then there exists a decomposition
\[
 \overline{D}=\overline{H}+\overline{E}
\]
such that $\overline{H}$ is an ample arithmetic $\RR$-divisor such that $\overline{H}-\gamma\overline{M}$ is a pseudo-effective arithmetic $\RR$-divisor and that $\overline{E}$ is an effective arithmetic $\QQ$-divisor.
In particular, the sets $\widehat{\Theta}_{\rm amp}(\overline{D})\subseteq\widehat{\Theta}_{C^{\infty}}(\overline{D})\subseteq\widehat{\Theta}_{\rm ad}(\overline{D})$ are all nonempty.
\end{proposition}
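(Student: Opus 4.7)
My strategy is to shift $\overline{D}$ by $\gamma\overline{M}$, preserve bigness, and then apply an arithmetic Kodaira-type lemma. The decisive algebraic identity is
\[
 \overline{D}-\gamma\overline{M}=(1-\gamma)\overline{D}+\gamma\overline{F},
\]
obtained by substituting $\overline{F}=\overline{D}-\overline{M}$. Since $1-\gamma>0$ and $\overline{D}$ is big, $(1-\gamma)\overline{D}$ is big; since $\gamma>0$ and $\overline{F}$ is pseudo-effective, so is $\gamma\overline{F}$; and pseudo-effectivity of $\overline{F}$ means exactly that its sum with any big arithmetic $\RR$-divisor stays big. Consequently, $\overline{D}-\gamma\overline{M}$ is big.

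\textbf{Main construction.} Applying an arithmetic Kodaira's lemma of Moriwaki \cite{Moriwaki12a} to the big arithmetic $\RR$-divisor $\overline{D}-\gamma\overline{M}$, I would produce a decomposition
\[
 \overline{D}-\gamma\overline{M}=\overline{A}_0+\overline{E}_0,
\]
where $\overline{A}_0$ is an ample arithmetic $\RR$-divisor and $\overline{E}_0$ is an effective arithmetic $\QQ$-divisor; the $\RR$-coefficient version is reached from the classical $\QQ$-statement by approximating $\overline{D}-\gamma\overline{M}$ by a nearby big $\QQ$-rational divisor, applying the classical Kodaira lemma to it, and then absorbing the small $\RR$-irrational residue into the ample piece, which stays ample because the ample cone is open. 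Setting
\[
 \overline{H}:=\gamma\overline{M}+\overline{A}_0,\qquad\overline{E}:=\overline{E}_0,
\]
I immediately obtain $\overline{D}=\overline{H}+\overline{E}$ with $\overline{E}$ effective arithmetic $\QQ$-divisor and $\overline{H}-\gamma\overline{M}=\overline{A}_0$ ample (hence pseudo-effective). Finally, $\overline{H}$ is itself ample, as nef plus ample is ample; if $\overline{M}$ carries only a $C^0$ Green function so that $\overline{H}$ is a priori of $C^0$-type, I would first smooth $\overline{M}$ by the Blocki--Kolodziej regularization (\cite{Blocki_Kolo07}; cf.\ \cite[Theorem~4.6]{Moriwaki12a}) into a $C^\infty$ nef replacement $\overline{M}+(0,f)$ with $f\in C^0(X)_{\geq 0}$, carry out the preceding construction with the smoothed divisor, and descend back to $\overline{M}$ via the slack afforded by $1-\gamma>0$ and the continuity of $\avol$.

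\textbf{Nonemptiness and main obstacle.} For the ``in particular'' assertion, I take $X'=X$, $\varphi=\id_X$, and use the ample arithmetic divisor $\overline{H}$ just produced. Since $\varphi^*\overline{D}-\overline{H}=\overline{E}$ is an effective arithmetic $\QQ$-divisor and $\overline{H}$ is ample, the pair $(\id_X;\overline{H})$ belongs to $\widehat{\Theta}_{\rm amp}(\overline{D})$, which sits inside $\widehat{\Theta}_{C^{\infty}}(\overline{D})\subseteq\widehat{\Theta}_{\rm ad}(\overline{D})$, establishing nonemptiness of all three sets at once. The main obstacle I anticipate is arranging the arithmetic Kodaira decomposition to yield \emph{both} a strictly ample ($C^\infty$) $\RR$-piece and an effective $\QQ$-piece, together with the regularization required to reconcile a nef $C^0$ divisor $\overline{M}$ with the strict $C^\infty$ definition of ampleness from \S 2; the bigness of the shifted divisor in Step 1 and the final assembly are essentially formal once these technical tools are in place.
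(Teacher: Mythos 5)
Your overall plan tracks the paper's quite closely: both hinge on the identity $\overline{D}-\gamma\overline{M}=(1-\gamma)\overline{D}+\gamma\overline{F}$ (equivalently, $\overline{D}=\gamma\overline{M}+\gamma\overline{F}+(1-\gamma)\overline{D}$) to see that a $\gamma\overline{M}$-shifted copy of $\overline{D}$ is still big, and then produce an ample $C^\infty$ part plus an effective $\QQ$-part. Where the paper unwinds the Kodaira-type decomposition explicitly (writing $\overline{D}=(2\overline{H}+\gamma\overline{M})+a_1\overline{E}_1+\dots+a_r\overline{E}_r+(0,2\delta)$ with $\overline{H}$ ample and $\overline{E}_i$ big effective, then regularizing and rationalizing coefficients), you invoke it as a black box; that in itself is a presentational choice, not a gap.

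The gap is in the regularization step. You propose to ``first smooth $\overline{M}$ by the Blocki--Kolodziej regularization into a $C^\infty$ nef replacement $\overline{M}+(0,f)$ with $f\geq 0$.'' This does not work in general: $\overline{M}$ being nef says only that the curvature current is semipositive and the generic fiber $M_{\QQ}$ is relatively nef, and the Blocki--Kolodziej/Demailly regularization of a continuous psh weight on a merely nef class typically loses an $\varepsilon$ of positivity, which cannot be absorbed without an ample reference form. If $M_{\QQ}$ is not ample (or at least big), there is nothing to absorb the loss, and there need not exist any $f\in C^0(X)_{\geq 0}$ with $\overline{M}+(0,f)$ nef of $C^\infty$-type. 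This is precisely why the paper never regularizes $\overline{M}$ in isolation; instead it regularizes $\overline{H}+\gamma\overline{M}$, where the ample $\overline{H}$ makes $H_{\QQ}+\gamma M_{\QQ}$ ample, so that \cite[Theorem~1]{Blocki_Kolo07} (or \cite[Theorem~4.6]{Moriwaki12a}) applies and a small non-negative $f$ gives a nef $C^\infty$ divisor $\overline{H}+\gamma\overline{M}+(0,f)$. The same issue resurfaces in your construction even without an explicit regularization step: once you set $\overline{H}:=\gamma\overline{M}+\overline{A}_0$ with $\overline{A}_0$ ample (hence $C^\infty$) and $\overline{M}$ only of $C^0$-type, $\overline{H}$ is not of $C^\infty$-type and therefore cannot be ample under the definition in \S 2, so the ``nef plus ample is ample'' shortcut is unavailable until the $C^0$ piece has been smoothed together with an ample companion. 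To repair the argument you should perform the Kodaira decomposition of $\overline{D}-\gamma\overline{M}$ so that it singles out an ample $\overline{H}_0$ and a small slack $(0,2\delta)$, regularize $\overline{H}_0+\gamma\overline{M}$ jointly, and push the regularization error into the $(0,2\delta)$ slack before rationalizing the real coefficients; this is exactly the sequence of moves in the paper's proof.
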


\begin{proof}
Since $\gamma\overline{D}=\gamma\overline{M}+\gamma\overline{F}$ and $(1-\gamma)\overline{D}$ is big, we can find a decomposition
\[
 \overline{D}=(2\overline{H}+\gamma\overline{M})+a_1\overline{E}_1+\dots+a_r\overline{E}_r+(0,2\delta)
\]
such that $\overline{H}$ is an ample arithmetic $\RR$-divisor, $a_1,\dots,a_r,\delta$ are positive real numbers, and $\overline{E}_1,\dots\overline{E}_r$ are big and effective arithmetic divisors.
Since $H_{\QQ}+\gamma M_{\QQ}$ is ample, we can approximate the metric of $\overline{H}+\gamma\overline{M}$ by smooth semipositive metrics (\cite[Theorem~1]{Blocki_Kolo07} or \cite[Theorem~4.6]{Moriwaki12a}).
Thus we can choose a non-negative continuous function $f\in C^0(X)$ such that $\|f\|_{\sup}<\delta$ and $\overline{H}+\gamma\overline{M}+(0,f)$ is a nef arithmetic $\RR$-divisor of $C^{\infty}$-type.
Moreover, by the Stone-Weierstrass theorem, we can find non-negative continuous functions $g_1,\dots,g_r\in C^0(X)$ such that $\|g_i\|_{\sup}<\delta/(a_1+\dots+a_r)$ and $\overline{E}_i+(0,g_i)$ is $C^{\infty}$ for all $i$.
Set $\overline{E}_i':=\overline{E}_i+(0,g_i)$ and $g:=a_1g_1+\dots+a_rg_r$.
Then
\[
 \overline{D}=(2\overline{H}+\gamma\overline{M}+(0,f))+a_1\overline{E}_1'+\dots+a_r\overline{E}_r'+(0,2\delta-f-g).
\]
Since $\overline{H}$ is ample and $\overline{E}_1',\dots,\overline{E}_r'$ are $C^{\infty}$, there exists an $\varepsilon>0$ such that
\[
 \overline{H}+\varepsilon_1\overline{E}_1'+\dots+\varepsilon_r\overline{E}_r'
\]
is ample for all $\varepsilon_1,\dots\varepsilon_r\in\RR$ with $|\varepsilon_1|+\dots+|\varepsilon_r|<\varepsilon$.
We can find $b_1,\dots,b_r\in\QQ_{>0}$ such that $|b_1-a_1|+\dots+|b_r-a_r|<\varepsilon$, and set $\overline{H}':=\overline{H}+(a_1-b_1)\overline{E}_1'+\dots+(a_r-b_r)\overline{E}_r'$.
Then $\overline{H}'$ is an ample arithmetic $\RR$-divisor, $b_1\overline{E}_1'+\dots+b_r\overline{E}_r'+(0,2\delta-f-g)$ is an effective arithmetic $\QQ$-divisor, and
\[
 \overline{D}=(\overline{H}'+\overline{H}+\gamma\overline{M}+(0,f))+b_1\overline{E}_1'+\dots+b_r\overline{E}_r'+(0,2\delta-f-g).
\]
Hence we conclude the proof.
\end{proof}

We define an order $\leq$ on the set $\widehat{\Theta}(\overline{D})$ in such a way that
\begin{align}
 &(\varphi_1:X_1'\to X;\overline{M}_1)\leq (\varphi_2:X_2'\to X;\overline{M}_2) \label{eqn:deforder}\\
 \overset{\rm def}{\Leftrightarrow}\quad &\text{there exists a blow-up $\varphi:X'\to X$ such that $\varphi$ factors as} \nonumber\\
 &\text{$X'\xrightarrow{\psi_i}X_i'\xrightarrow{\varphi_i} X$ for $i=1,2$ and $\psi_1^*\overline{M}_1\leq\psi_2^*\overline{M}_2$.}\nonumber
\end{align}
Then we have

\begin{proposition}\label{prop:defaposint2}
The set $\widehat{\Theta}_{\rm ad}(\overline{D})$ is filtered with respect to the order (\ref{eqn:deforder}).
\end{proposition}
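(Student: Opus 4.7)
The plan is, given $\overline{\mathcal{R}}_i = (\varphi_i \colon X_i' \to X; \overline{M}_i) \in \widehat{\Theta}_{\rm ad}(\overline{D})$ with associated effective arithmetic $\QQ$-divisors $\overline{E}_i := \varphi_i^* \overline{D} - \overline{M}_i$ for $i = 1, 2$, to construct a common upper bound by a ``pointwise max'' operation on a common blow-up. First choose a proper birational morphism $\pi \colon X' \to X$ from a generically smooth normal projective arithmetic variety that factors as $\pi = \varphi_i \circ \psi_i$; such $X'$ exists after normalizing the graph of $\varphi_1^{-1} \circ \varphi_2$ and resolving the generic fiber. Set $\overline{M}_i' := \psi_i^* \overline{M}_i$ (nef on $X'$) and $\overline{E}_i' := \psi_i^* \overline{E}_i$ (effective $\QQ$), so that $\pi^* \overline{D} = \overline{M}_i' + \overline{E}_i'$ for $i = 1, 2$.

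Define a candidate upper bound $\overline{\mathcal{R}}_3 := (\pi \colon X' \to X; \overline{M}_3)$ by declaring the divisor part $M_3 := \pi^* D - \min(E_1', E_2')$, where $\min$ is the coefficient-wise minimum of the two effective $\QQ$-divisors, and the Green function $g_{\overline{M}_3} := \max(g_{\overline{M}_1'}, g_{\overline{M}_2'})$ pointwise. One first checks that $g_{\overline{M}_3}$ is a legitimate Green function of $C^0$-type for $M_3$: near any prime divisor $\Gamma$ the maximum selects the $g_{\overline{M}_i'}$ whose multiplicity of $\Gamma$ in $M_i'$ is the larger, matching $\mult_\Gamma M_3 = \max(\mult_\Gamma M_1', \mult_\Gamma M_2')$. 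Setting $\overline{E}_3 := \pi^* \overline{D} - \overline{M}_3$, the divisor part is $\min(E_1', E_2')$ and the Green function is $\min(g_{\overline{E}_1'}, g_{\overline{E}_2'})$; both are nonneg, so $\overline{E}_3$ is an effective arithmetic $\QQ$-divisor.

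It remains to verify that $\overline{\mathcal{R}}_i \leq \overline{\mathcal{R}}_3$ and that $\overline{M}_3$ is nef. The inequality follows from $\overline{M}_3 - \overline{M}_i' = \overline{E}_i' - \overline{E}_3$, whose divisor has $\mult_\Gamma(E_i' - E_3) = \max(\mult_\Gamma E_i' - \mult_\Gamma E_j', 0) \geq 0$ for $j \neq i$ and whose Green function equals $\max(0, g_{\overline{E}_i'} - g_{\overline{E}_j'}) \geq 0$. For nefness of $\overline{M}_3$, the Green function is plurisubharmonic as the maximum of plurisubharmonic functions, and heights are non-negative via $h_{\overline{M}_3}(x) = h_{\overline{M}_i'}(x) + h_{\overline{E}_i' - \overline{E}_3}(x) \geq 0$ using nefness of $\overline{M}_i'$ together with the (possibly infinite) non-negativity of heights of effective arithmetic divisors. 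The principal obstacle is relative nefness of $M_3$ on the generic fiber $X'_\QQ$: since $\Supp(E_1' - E_3)$ and $\Supp(E_2' - E_3)$ share no common prime divisor, for any irreducible curve $C \subseteq X'_\QQ$ one can choose $i$ with $C \not\subseteq \Supp(E_i' - E_3)$ and conclude $M_3 \cdot C = M_i' \cdot C + (E_i' - E_3) \cdot C \geq 0$; the delicate case where $C$ lies in the codimension-two intersection of both supports (which can arise once $d \geq 3$) requires a further argument, e.g.\ perturbation by a small ample class or use of the additional flexibility coming from Proposition~\ref{prop:defaposint1}.
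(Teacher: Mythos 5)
You correctly locate the construction — on a common model, take the coefficient-wise minimum of the fixed parts and the pointwise maximum of the Green functions — and you also correctly locate where the argument breaks: curves (and rational points) sitting in the codimension-$\geq 2$ locus $\Supp(E_1'-E_3)\cap\Supp(E_2'-E_3)$, which your choice of common model $X'$ does nothing to eliminate. This is a genuine gap, not a technicality. On $X'$ the divisors $E_1'-E_3$ and $E_2'-E_3$ share no prime divisor of their supports, but their supports can still intersect as closed sets; a curve $C$ or rational point $x$ inside that intersection defeats both of your inequalities $M_3\cdot C = M_i'\cdot C + (E_i'-E_3)\cdot C$ and $h_{\overline{M}_3}(x) = h_{\overline{M}_i'}(x) + h_{\overline{E}_i'-\overline{E}_3}(x)$, since neither choice of $i$ escapes the support. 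The two suggested patches do not close the gap: the filtered order of (3.3) requires $\overline{M}_3 - \psi_i^*\overline{M}_i$ to be genuinely \emph{effective}, whereas Proposition 3.1 only produces $\overline{H}$ with $\overline{H}-\gamma\overline{M}_3$ pseudo-effective for some $\gamma<1$ (strictly weaker, and losing the order), and adding a small ample class to $M_3$ yields something nef only if $M_3$ already was.

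The paper's proof differs in precisely the step you flag: before taking the minimum, it performs one more blow-up, along the ideal sheaf $I = \Image\bigl(\mathcal{O}(-F_1')\oplus\mathcal{O}(-F_2')\to\mathcal{O}\bigr)$, i.e.\ the sum of the ideal sheaves of the two fixed parts. After this blow-up the natural map $\mathcal{O}(-\psi_1^*F_1')\oplus\mathcal{O}(-\psi_1^*F_2')\to\mathcal{O}(-F')$ is \emph{surjective}, which is exactly the statement that the supports of $\psi_1^*F_1'-F'$ and $\psi_1^*F_2'-F'$ are disjoint as closed sets, not merely in codimension one — and this is what your pointwise argument would need. The paper then establishes arithmetic nefness of the resulting $\overline{M}$ not by your pointwise case-split but by passing, via the Bloch--Gieseker covering trick (Lemma 3.3), to integral Cartier divisors on a finite cover, and using the two global-generation Claims 3.6 and 3.7 to show that $\overline{N}'$ is generated by small sections, whence nef. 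Your sketch omits this machinery entirely. A secondary issue: the coefficient-wise minimum of two Cartier $\QQ$-divisors need not be $\RR$-Cartier on the normal, not-necessarily-smooth models used in this paper, so the extra blow-up is needed even to make $\overline{M}_3$ a legitimate arithmetic $\RR$-divisor.
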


\begin{proof}
Let $\overline{\mathcal{R}}_1:=(\varphi_1:Y_1\to X;\overline{M}_1)$ and $\overline{\mathcal{R}}_2:=(\varphi_2:Y_2\to X;\overline{M}_2)$ be two admissible approximations of $\overline{D}$ and set $\overline{F}_i:=\varphi_i^*\overline{D}-\overline{M}_i$ for $i=1,2$.
What we would like to show is that there exists an admissible approximation $\overline{\mathcal{R}}:=(\varphi:Y\to X;\overline{M})\in\widehat{\Theta}_{\rm ad}(\overline{D})$ such that $\overline{\mathcal{R}}_i\leq\overline{\mathcal{R}}$ for $i=1,2$.
By using the same arguments as above, we may assume that $Y_1=Y_2$ and $\varphi_1=\varphi_2$.
Let $m\geq 1$ be an integer such that $\overline{F}_1':=m\overline{F}_1$ (resp.\ $\overline{F}_2':=m\overline{F}_2$) has a non-zero section $s_1\in\Hz(Y_1,F_1')$ (resp.\ $s_2\in\Hz(Y_1,F_2')$) having supremum norm less than or equal to one.
Consider the morphism $\mathcal{O}_{Y_1}(-F_1')\oplus\mathcal{O}_{Y_1}(-F_2')\to\mathcal{O}_{Y_1}$ defined as $(t_1,t_2)\mapsto s_1\otimes t_1+s_2\otimes t_2$ for a local section $(t_1,t_2)$ of $\mathcal{O}_{Y_1}(-F_1')\oplus\mathcal{O}_{Y_1}(-F_2')$, and set
\[
 I:=\Image(\mathcal{O}_{Y_1}(-F_1')\oplus\mathcal{O}_{Y_1}(-F_2')\to\mathcal{O}_{Y_1}).
\]
Let $\psi_1:Y\to Y_1$ be a blowing up such that $Y$ is generically smooth and normal and that $\psi_1^{-1}I\cdot\mathcal{O}_Y$ is Cartier.
Let $\varphi:=\psi_1\circ\varphi_1$.
Let $F'$ be an effective Cartier divisor such that $\mathcal{O}_{Y}(-F')=\psi_1^{-1}I\cdot\mathcal{O}_Y$, and let $1_{F'}$ be the canonical section.
Then the assertion follows from Lemma~\ref{lem:filtered} below.
\end{proof}

\begin{lemma}\label{lem:bunbo}
Let $Y$ be a generically smooth normal projective arithmetic variety and $l\geq 1$ an integer.
For any $D\in\Div(Y)$ and for any $l\geq 1$, there exists a finite morphism $\psi:Z\to Y$ of arithmetic varieties and a Cartier divisor $D'\in\Div(Z)$ such that $Z$ is generically smooth and normal, and $\psi^*D\sim lD'$.
\end{lemma}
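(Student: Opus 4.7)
The plan is to build $\psi:Z\to Y$ as a Kummer-type cover coming from a single generic projective embedding of $Y$, exploiting the tautological identity $[l]^*\mathcal{O}_{\PP^N}(1)\cong\mathcal{O}_{\PP^N}(l)$ to force $\psi^*\mathcal{O}_Y(D)$ to lie in $l\cdot\Pic(Z)$. First I would fix a very ample divisor $H$ on $Y$ and choose $m\gg 0$ so that $L:=\mathcal{O}_Y(D+mlH)$ is very ample; such $m$ exists because $\mathcal{O}_Y(D)\otimes\mathcal{O}_Y(H)^{\otimes N}$ is very ample for all sufficiently large $N$. Using $|L|$ to embed $\iota:Y\hookrightarrow\PP^N$ with $\iota^*\mathcal{O}(1)=L$, I would then apply a generic element of $\PGL_{N+1}$ to arrange that $Y_{\QQ}$ meets every intersection of coordinate hyperplanes of $\PP^N_{\QQ}$ transversely (a Bertini-type statement: a generic ordered $(N+1)$-tuple of hyperplanes is in general position against any fixed subvariety of $\PP^N_{\QQ}$).

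Next I would take $\varphi:\PP^N\to\PP^N$ to be the $l$-th-power map $[x_0:\cdots:x_N]\mapsto[x_0^l:\cdots:x_N^l]$---a finite flat morphism of degree $l^N$ coming from the graded inclusion $\ZZ[x_0^l,\ldots,x_N^l]\hookrightarrow\ZZ[x_0,\ldots,x_N]$ and satisfying $\varphi^*\mathcal{O}(1)\cong\mathcal{O}(l)$---and form the fibre product $Z_0:=Y\times_{\PP^N,\varphi}\PP^N$. Letting $Z$ be the normalization of an irreducible component of $Z_0$ surjecting onto $Y$, and $\psi:Z\to Y$ the first projection, the map $\psi$ is finite (a finite base change of $\varphi$ composed with a finite normalization, the latter being finite because arithmetic varieties are excellent), and $Z$ is integral and normal. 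With $q:Z\to\PP^N$ denoting the second projection, the defining identity $\iota\circ\psi=\varphi\circ q$ of the fibre product gives
\[
 \psi^*\mathcal{O}_Y(D+mlH)=q^*\varphi^*\mathcal{O}(1)=q^*\mathcal{O}(l)\cong\bigl(q^*\mathcal{O}(1)\bigr)^{\otimes l},
\]
and since $\bigl(\psi^*\mathcal{O}_Y(H)\bigr)^{\otimes ml}=\bigl((\psi^*\mathcal{O}_Y(H))^{\otimes m}\bigr)^{\otimes l}$ is trivially an $l$-th power, one deduces $\psi^*\mathcal{O}_Y(D)\cong\bigl(q^*\mathcal{O}(1)\otimes(\psi^*\mathcal{O}_Y(H))^{\otimes(-m)}\bigr)^{\otimes l}$; picking any Cartier divisor $D'$ representing this line bundle yields $\psi^*D\sim lD'$.

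The hard part will be generic smoothness of $Z$: a finite cover of a smooth variety need not be smooth, and passing to a resolution would destroy finiteness of $\psi$, so smoothness of $Z_{\QQ}$ must be forced directly by the embedded geometry. The local computation is that at any point of $Y_{\QQ}$ lying in precisely the coordinate hyperplanes indexed by $S\subseteq\{0,\ldots,N\}$, transversality lets me choose \'etale-local coordinates so that $Z_0$ is cut out nearby by equations of the form $u_k=z_k^l$ for $k\in S$ (after eliminating the coordinates where the $l$-th-power map is \'etale), whose Jacobian is of maximal rank; hence $(Z_0)_{\QQ}$ is already regular, and because the finite birational map $Z\to Z_0$ is an isomorphism on the normal locus, $Z_{\QQ}\cong(Z_0)_{\QQ}$ is smooth. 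Everything else---the $l$-th-power identity in $\Pic(Z)$, projectivity, and the conversion from an isomorphism of line bundles to a linear equivalence of divisors on the normal scheme $Z$---is formal.
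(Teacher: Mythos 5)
Your construction is exactly the Bloch--Gieseker covering trick (cyclic $l$-th-power covers of $\PP^N$ pulled back along a general projective embedding), which is precisely what the paper's proof invokes by citing \cite[Proof of Theorem~4.1.10]{LazarsfeldI}. The one point the paper's ``mutatis mutandis'' is flagging, and that you should make explicit, is that the Bertini-type general-position move concerns $Y_{\QQ}\hookrightarrow\PP^N_{\QQ}$, so the generic element of $\PGL_{N+1}$ must be chosen over $\QQ$ (possible since Bertini holds over the infinite field $\QQ$) and then spread out, after clearing denominators, to a finite endomorphism of $\PP^N_{\ZZ}$ pulling back $\mathcal{O}(1)$ to $\mathcal{O}(1)$, before forming the fibre product over $\ZZ$; with that made precise, your local Jacobian computation and the passage from $q^*\mathcal{O}(1)$ to $D'$ are exactly Lazarsfeld's argument.
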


\begin{proof}
This is known as the Bloch-Gieseker covering trick, and \cite[Proof of Theorem~4.1.10]{LazarsfeldI} mutatis mutandis applies to our case (see also \cite[page 246, footnote]{LazarsfeldI}).
\end{proof}

\begin{lemma}\label{lem:filtered}
We keep the notations in Proposition~\ref{prop:defaposint2}.
\begin{enumerate}
\item[\textup{(1)}] We can endow $\mathcal{O}_{Y}(F')$ with a continuous Hermitian metric in such a way that
\[
 |1_{F'}|_{\overline{F}'}(x):=\max\left\{|s_1|_{\overline{F}_1'}(\psi_1(x)),|s_2|_{\overline{F}_2'}(\psi_1(x))\right\}\leq 1
\]
for $x\in Y(\CC)$, and $\psi_1^*\overline{F}_i'-\overline{F}'$ is effective for $i=1,2$.
\item[\textup{(2)}] Set $\overline{F}:=\overline{F}'/m$ and $\overline{M}:=\varphi^*\overline{D}-\overline{F}$.
Then $\overline{\mathcal{R}}:=(\varphi:Y\to X;\overline{M})\in\widehat{\Theta}_{\rm ad}(\overline{D})$ and $\overline{\mathcal{R}}_i\leq\overline{\mathcal{R}}$ for $i=1,2$.
\end{enumerate}
\end{lemma}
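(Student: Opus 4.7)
The strategy is to first verify in (1) that the stated formula defines a genuine continuous Hermitian metric of $C^0$-type with the claimed pointwise bound and dominance, and then in (2) to check that the resulting $\overline{M}:=\varphi^*\overline{D}-\overline{F}$ is nef so that $(\varphi,\overline{M})$ is an admissible approximation of $\overline{D}$ dominating both $\overline{\mathcal{R}}_1,\overline{\mathcal{R}}_2$ in the order (\ref{eqn:deforder}).

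For (1), I would work with the equivalent form of the Green function
\[
 g_{\overline{F}'}(x)=\min_{i=1,2}\bigl\{\psi_1^*g_{\overline{F}_i'}(x)-\log|\psi_1^*s_i(x)|^2\bigr\}
\]
and check it is of $C^0$-type for $F'$. Fix $y\in Y(\CC)$, a local equation $f$ for $F'$ near $y$, and local equations $a_i$ for $F_i'$ near $\psi_1(y)$. The defining identity $\psi_1^{-1}I\cdot\mathcal{O}_Y=\mathcal{O}_Y(-F')$ gives factorizations $\psi_1^*(s_ia_i)=f\cdot w_i$ with $w_1,w_2$ generating the unit ideal locally; in particular the effective divisors $G_i:=\mathrm{div}(w_i)$ satisfy $G_1\cap G_2=\emptyset$ as sets. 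Substituting,
\[
 g_{\overline{F}'}+\log|f|^2=\min_i\bigl\{\psi_1^*(g_{\overline{F}_i'}+\log|a_i|^2)-\log|w_i|^2\bigr\},
\]
and at each $y$ the minimum is realized by a finite, locally continuous term since at least one $w_i(y)\neq 0$. The bound $|1_{F'}|_{\overline{F}'}\leq 1$ is immediate from $\|s_i\|_{\sup}^{\overline{F}_i'}\leq 1$. For effectivity of $\psi_1^*\overline{F}_i'-\overline{F}'$, the Green-function part is $\psi_1^*g_{\overline{F}_i'}-g_{\overline{F}'}\geq 0$ by the min; taking $s_i$ to be the canonical section $1\in\Hz(Y_1,F_i')$ (valid since $\overline{F}_i'$ is effective, so $\|1\|_{\sup}^{\overline{F}_i'}\leq 1$), one has $D_i=F_i'$ and the divisorial part reduces to $\psi_1^*F_i'-F'=G_i\geq 0$.

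For (2), effectivity of $\overline{F}=\overline{F}'/m$ is immediate from $F'\geq 0$ and $g_{\overline{F}'}\geq 0$, and the comparison $\overline{\mathcal{R}}_i\leq\overline{\mathcal{R}}$ follows from (1) via the identity $\overline{M}-\psi_1^*\overline{M}_i=\psi_1^*\overline{F}_i-\overline{F}$. The nontrivial step is nefness of $\overline{M}$. Using $\overline{F}_i=\varphi_i^*\overline{D}-\overline{M}_i$ and $g_{\overline{F}}=\min_i\psi_1^*g_{\overline{F}_i}$, one obtains the dual identity $g_{\overline{M}}=\max_i\psi_1^*g_{\overline{M}_i}$, which is plurisubharmonic as a maximum of plurisubharmonic pullbacks. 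Height positivity follows from the dominance and nefness of the $\overline{M}_i$: for $\tilde x\in Y(\overline{\QQ})$ lying over $x\in X(\overline{\QQ})$,
\[
 h_{\overline{M}}(\tilde x)=h_{\overline{D}}(x)-h_{\overline{F}}(\tilde x)\geq h_{\overline{D}}(x)-h_{\psi_1^*\overline{F}_i}(\tilde x)=h_{\psi_1^*\overline{M}_i}(\tilde x)\geq 0.
\]

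The main obstacle is the relative nefness of the divisor $M$ itself. For $i=1,2$ one has $M=\psi_1^*M_i+G_i$, and any irreducible curve $C\subset Y$ can be contained in at most one of $G_1,G_2$ by the disjointness $G_1\cap G_2=\emptyset$. For the index $i$ with $C\not\subset G_i$ one has $G_i\cdot C\geq 0$, so $M\cdot C=\psi_1^*M_i\cdot C+G_i\cdot C\geq 0$ using the nefness of $M_i$. This case distinction, which exploits the fine disjointness structure produced by the blow-up of $I$, is the one genuinely geometric ingredient; the rest of the proof amounts to formal manipulations of Green functions and pullbacks.
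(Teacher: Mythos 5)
Your part (1) is essentially the paper's proof, phrased via Green functions rather than metrics; the local factorization producing $w_1,w_2$ with no common zero is the same as the paper's $f_{1,\nu},f_{2,\nu}$, and you are right to note that the clean effectivity of $\psi_1^*\overline{F}_i'-\overline{F}'$ needs $s_i$ to be the canonical section $1\in\Hz(Y_1,F_i')$, which the paper leaves implicit. Part (2) is where you genuinely diverge: the paper proves nefness of $\overline{M}$ by perturbing by an ample $\overline{H}$, passing to a Bloch--Gieseker cover (Lemma~\ref{lem:bunbo}), and extracting nefness from the global-generation statements of Claims~\ref{clm:tsuke} and~\ref{clm:chottomendo}; you instead check the three conditions in the definition of nefness directly from the decomposition $\overline{M}=\psi_1^*\overline{M}_i+(\psi_1^*\overline{F}_i-\overline{F})$ together with the disjointness $\Supp(\psi_1^*F_1-F)\cap\Supp(\psi_1^*F_2-F)=\emptyset$. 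This replaces the whole covering apparatus with a one-line case distinction and, once the point below is fixed, is a real simplification.

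However, the height step as written has a gap. You deduce $h_{\overline{F}}(\tilde x)\leq h_{\psi_1^*\overline{F}_i}(\tilde x)$ for an arbitrary $i$ from the fact that $\psi_1^*\overline{F}_i-\overline{F}$ is effective. But effectivity of an arithmetic $\RR$-divisor $\overline{E}$ does \emph{not} force $h_{\overline{E}}(\tilde x)\geq 0$ when $\tilde x\in\Supp(E)(\overline{\QQ})$: the height there is $\adeg(\overline{E}|_{C_{\tilde x}})$, which involves the continuous extension of $g_{\overline{E}}+\log|\cdot|^2$ across the support, and that extension can be negative even when $g_{\overline{E}}\geq 0$. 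For instance, on $\PP^1_{\ZZ}$ take $\overline{E}=([\{0\}],g)$ with $g(z)=\max(-\log|z|^2-1,\,0)$; one computes $h_{\overline{E}}([0{:}1])=-\tfrac{1}{2}$. So your inequality is only justified when $\tilde x\notin\Supp(\psi_1^*F_i-F)$ for the chosen $i$. The repair is exactly the disjointness case distinction you already invoke for relative nefness: given $\tilde x$, pick the index $i$ with $\tilde x\notin\Supp(\psi_1^*F_i-F)$ (possible because the two supports are disjoint), after which $h_{\psi_1^*\overline{F}_i-\overline{F}}(\tilde x)\geq 0$ holds by the elementary formula, and your computation goes through.
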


\begin{proof}
(1): We can choose an open covering $\{U_{\nu}\}$ of $Y(\CC)$ such that $\psi_1^*\mathcal{O}_Y(F_i')_{\CC}|_{U_{\nu}}$ is trivial with local frame $\eta_{i,\nu}$, and $F_{\CC}'\cap U_{\nu}$ is defined by a local equation $g_{\nu}$.
Since $s_i\in\Hz(Y_1,\mathcal{O}_{Y_1}(F_i')\otimes I)\subseteq\Hz(Y,\psi_1^*F_i'-F')$, there exists a $\sigma_i\in\Hz(Y,\psi_1^*F_i'-F')$ such that $\sigma_i\otimes 1_{F'}=\psi_1^*s_i$.
Thus, we can write
\[
 \psi_1^*s_i|_{U_{\nu}}=f_{i,\nu}\cdot g_{\nu}\cdot\eta_{i,\nu}
\]
on $U_{\nu}$, where $f_{1,\nu},f_{2,\nu}$ are holomorphic functions on $U_{\nu}$ satisfying $\{x\in U_{\nu}\,|\,f_{1,\nu}(x)=f_{2,\nu}(x)=0\}=\emptyset$.
Since
\begin{align*}
 &\max\left\{|s_1|_{\overline{F}_1'}(\psi_1(x)),|s_2|_{\overline{F}_2'}(\psi_1(x))\right\}\\
 &\qquad\qquad=\max\left\{|f_{1,\nu}(x)|\cdot |\eta_{1,\nu}|_{\psi_1^*\overline{F}_1'}(x),|f_{2,\nu}(x)|\cdot |\eta_{2,\nu}|_{\psi_1^*\overline{F}_2'}(x)\right\}\cdot |g_{\nu}(x)|
\end{align*}
for $x\in U_{\nu}$, we have the first half of the assertion.
The latter half follows from
\[
 \|\sigma_i\|_{\sup}^{\psi_1^*\overline{F}_i'-\overline{F}'}=\sup_{x\in (Y\setminus F')(\CC)}\frac{|s_i|_{\overline{F}_i'}(\psi_1(x))}{\max_j\left\{|s_j|_{\overline{F}_j'}(\psi_1(x))\right\}}\leq 1.
\]

(2): Since $\varphi^*g_{\overline{D}}-\psi_1^*g_{\overline{F}_i'}/m$ are plurisubharmonic, so is
\[
 g_{\overline{M}}:=\max\left\{\varphi^*g_{\overline{D}}-\frac{1}{m}\psi_1^*g_{\overline{F}_1'},\varphi^*g_{\overline{D}}-\frac{1}{m}\psi_1^*g_{\overline{F}_2'}\right\}.
\]
Let $\overline{H}$ be any ample arithmetic $\RR$-divisor on $Y$ such that $\overline{E}:=\varphi^*\overline{D}+\overline{H}$ is an arithmetic $\QQ$-divisor.
Set $\overline{N}_i:=\psi_1^*\overline{M}_i+\overline{H}=\overline{E}-\psi_1^*\overline{F}_i\in\aNef_{\QQ}(Y;C^0)$ for $i=1,2$ and $\overline{N}:=\overline{M}+\overline{H}=\overline{E}-\overline{F}\in\aDiv_{\QQ}(Y;C^0)$.
By Lemma~\ref{lem:bunbo}, we have

\begin{claim}
Let $l\geq 1$ be an integer such that $lmN_1$, $lmN_2$, and $lmN$ are all Cartier divisors on $Y$.
Then there exists a finite morphism $\psi:Z\to Y$ of arithmetic varieties and Cartier divisors $N_1'$, $N_2'$, and $N'$ on $Z$ such that $Z$ is normal and generically smooth and $lmN_1\sim lN_1'$, $lmN_2\sim lN_2'$, and $lmN\sim lN'$.
\end{claim}

We set $\overline{N}_1'$ (resp.\ $\overline{N}_2'$, $\overline{N}'$) as $N_1'$ (resp.\ $N_2'$, $N'$) endowed with the Green function induced from $m\psi^*\overline{N}_1$ (resp.\ $m\psi^*\overline{N}_2$, $m\psi^*\overline{N}$).
Then $\overline{N}_1',\overline{N}_2'\in\aNef(Z;C^0)$, $\overline{N}'\in\aDiv(Z;C^0)$, and $N_1'$ and $N_2'$ are ample.
Since the morphism $\mathcal{O}_Y(-\psi_1^*F_1')\oplus\mathcal{O}_Y(-\psi_1^*F_2')\to \mathcal{O}_Y(-F')$ is surjective, we have a surjective morphism $\mathcal{O}_Z(N_1')\oplus \mathcal{O}_Z(N_2')\to \mathcal{O}_Z(N')$ sending a local section  $(t_1,t_2)$ to $t_1\otimes\psi^*\sigma_1+t_2\otimes\psi^*\sigma_2$.

\begin{claim}\label{clm:tsuke}
For every sufficiently large $p\geq 1$ and for every $k=0,1,\dots,p$, $\mathcal{O}_Z(kN_1'+(p-k)N_2')$ is generated by its global sections.
In particular, $\Sym^p(N_1'\oplus N_2')$ is generated by its global sections for every $p\gg 1$.
\end{claim}

\begin{proof}
Since $N_1'$ and $N_2'$ are ample, there exists a $k_0\gg 1$ such that
\[
 \mathcal{O}_Z(pN_1')\quad\text{and}\quad \mathcal{O}_Z(pN_2')
\]
are globally generated for every $p\geq k_0$.
For $q=0,1,\dots,k_0-1$, there exists an $l_0\gg 1$ such that
\[
 \mathcal{O}_Z(pN_1'+qN_2')\quad\text{and}\quad\mathcal{O}_Z(qN_1'+pN_2')
\]
are globally generated for every $p\geq l_0$.
Suppose that $p+q\geq k_0+l_0$.
If $p\geq k_0$ and $q\geq k_0$, then $\mathcal{O}_Z(pN_1'+qN_2')$ is globally generated.
If $p<k_0$ (resp.\ $q<k_0$), then $q\geq l_0$ (resp.\ $p\geq l_0$) and $\mathcal{O}_Z(pN_1'+qN_2')$ is globally generated.
Hence we conclude.
\end{proof}

Since the diagram
\[
\xymatrix{
 \Hz(Z,pN')\otimes_{\ZZ}\mathcal{O}_Z \ar[r] & pN' \\
 \bigoplus_{k=0}^p\Hz(Z,kN_1'+(p-k)N_2')\otimes_{\ZZ}\mathcal{O}_Z \ar[r] \ar[u] & \Sym^p(N_1'\oplus N_2'), \ar[u]
}
\]
is commutative, we can see that $N'$ is nef.

\begin{claim}\label{clm:chottomendo}
For every sufficiently large $p\geq 1$ and for every $k=0,1,\dots,p$, we have
\[
\mathrm{F}^{0+}(Z,k\overline{N}_1'+(p-k)\overline{N}_2')_{\QQ}=\Hz(Z,kN_1'+(p-k)N_2')_{\QQ}.
\]
\end{claim}

\begin{proof}
Since $\overline{N}_1$ and $\overline{N}_2$ are both adequate on $Y$, there exists a $k_0\gg 1$ such that $\mathrm{F}^{0+}(Z,p\overline{N}_1')_{\QQ}=\Hz(Z,pN_1')_{\QQ}$ and $\mathrm{F}^{0+}(Z,p\overline{N}_2')_{\QQ}=\Hz(Z,pN_2')_{\QQ}$ for every $p\geq k_0$, and $\Hz(Z,pN_1')_{\QQ}\otimes\Hz(Z,qN_2')_{\QQ}\to\Hz(Z,pN_1'+qN_2')_{\QQ}$ is surjective for every $p,q$ with $p\geq k_0$ and $q\geq k_0$.
One can find an $l_0\gg 1$ such that $\mathrm{F}^{0+}(Z,p\overline{N}_1'+q\overline{N}_2')_{\QQ}=\Hz(Z,pN_1'+qN_2')_{\QQ}$ and $\mathrm{F}^{0+}(Z,q\overline{N}_1'+p\overline{N}_2')_{\QQ}=\Hz(Z,qN_1'+pN_2')_{\QQ}$ for every $p\geq l_0$ and for every $q=0,1,\dots,k_0-1$.
Then the claim holds for all $p\geq k_0+l_0$.
\end{proof}

We choose a $p\gg 1$ as in Claims~\ref{clm:tsuke} and \ref{clm:chottomendo}.
Since $\mathrm{F}^{0+}(Z,p\overline{N}')\otimes_{\ZZ}\mathcal{O}_{Z_{\QQ}}\to pN_{\QQ}'$ is surjective, $\overline{N}'$ is nef and thus $\overline{N}=\overline{M}+\overline{H}$ is also nef.
\end{proof}

For $\overline{D}_0,\dots,\overline{D}_n\in\aBigCone_{\RR}(X;C^0)$ and $\overline{D}_{n+1},\dots,\overline{D}_d\in\aNef_{\RR}(X;C^0)$, we define the \emph{arithmetic positive intersection number} of $(\overline{D}_0,\dots,\overline{D}_n;\overline{D}_{n+1},\dots,\overline{D}_d)$ as
\begin{equation}
 \langle\overline{D}_0\cdots\overline{D}_n\rangle\overline{D}_{n+1}\cdots\overline{D}_d:=\sup_{\overline{\mathcal{R}}_i\in\widehat{\Theta}_{\rm ad}(\overline{D}_i)}\overline{\mathcal{R}}_0\cdots\overline{\mathcal{R}}_n\cdot\overline{D}_{n+1}\cdots\overline{D}_d,
\end{equation}
where the supremum is taken over all admissible approximations $\overline{\mathcal{R}}_i\in\widehat{\Theta}_{\rm ad}(\overline{D}_i)$ for $i=0,1,\dots,n$.

\begin{remark}\label{rem:aposinthomo}
\begin{enumerate}
\item[(1)] By Proposition~\ref{prop:defaposint2}, the map
\begin{gather*}
 \aBigCone_{\RR}(X;C^0)^{\times (n+1)}\times\aNef_{\RR}(X;C^0)^{\times (d-n)}\to\RR,\\
 (\overline{D}_0,\dots,\overline{D}_n;\overline{D}_{n+1},\dots,\overline{D}_d)\mapsto\langle\overline{D}_0\cdots\overline{D}_n\rangle\overline{D}_{n+1}\cdots\overline{D}_d,
\end{gather*}
is symmetric and multilinear in the variables $\overline{D}_{n+1},\dots,\overline{D}_d$, and symmetric and positively homogeneous of degree one in $\overline{D}_0,\dots,\overline{D}_n$ and in $\overline{D}_{n+1},\dots,\overline{D}_d$.
In particular, by using the multilinearity, we can extend it to a map
\[
 \aBigCone_{\RR}(X;C^0)^{\times (n+1)}\times\aDiv_{\RR}^{\rm Nef}(X;C^0)^{\times (d-n)}\to\RR,
\]
which we also denote by $(\overline{D}_0,\dots,\overline{D}_n;\overline{D}_{n+1},\dots,\overline{D}_d)\mapsto\langle\overline{D}_0\cdots\overline{D}_n\rangle\overline{D}_{n+1}\cdots\overline{D}_d$.
\item[(2)] Let $\overline{D}_0,\dots,\overline{D}_n$ be big arithmetic $\RR$-divisors, $k_0,\dots,k_n$ positive integers with $k_0+\dots+k_n=N+1$, and $\overline{D}_{N+1},\dots,\overline{D}_d$ nef arithmetic $\RR$-divisors.
Then by Proposition~\ref{prop:defaposint2}, we have
\[
 \langle\overline{D}_0^{\cdot k_0}\cdots\overline{D}_n^{\cdot k_n}\rangle\overline{D}_{N+1}\cdots\overline{D}_d:=\sup_{\overline{\mathcal{R}}_i\in\widehat{\Theta}_{\rm ad}(\overline{D}_i)}\overline{\mathcal{R}}_0^{\cdot k_0}\cdots\overline{\mathcal{R}}_n^{\cdot k_n}\cdot\overline{D}_{N+1}\cdots\overline{D}_d.
\]
\item[(3)] If $\overline{D}_n$ is big and nef, then
\[
 \langle\overline{D}_0\cdots\overline{D}_n\rangle\overline{D}_{n+1}\cdots\overline{D}_d=\langle\overline{D}_0\cdots\overline{D}_{n-1}\rangle\overline{D}_n\cdot\overline{D}_{n+1}\cdots\overline{D}_d.
\]
\end{enumerate}
\end{remark}

\begin{proposition}\label{prop:replacewithample}
Let $\overline{D}_0,\dots,\overline{D}_n$ be big arithmetic $\RR$-divisors, $k_0,\dots,k_n$ positive integers with $k_0+\dots+k_n=N+1$, and $\overline{D}_{N+1},\dots,\overline{D}_d$ nef and big arithmetic $\RR$-divisors.
Then we have
\begin{align*}
 \langle\overline{D}_0^{\cdot k_0}\cdots\overline{D}_n^{\cdot k_n}\rangle\overline{D}_{N+1}\cdots\overline{D}_d&=\sup_{\overline{\mathcal{R}}_i\in\widehat{\Theta}(\overline{D}_i)}\overline{\mathcal{R}}_0^{\cdot k_0}\cdots\overline{\mathcal{R}}_n^{\cdot k_n}\cdot\overline{D}_{N+1}\cdots\overline{D}_d\\
 &=\sup_{\overline{\mathcal{R}}_i\in\widehat{\Theta}_{C^{\infty}}(\overline{D}_i)}\overline{\mathcal{R}}_0^{\cdot k_0}\cdots\overline{\mathcal{R}}_n^{\cdot k_n}\cdot\overline{D}_{N+1}\cdots\overline{D}_d\\
 &=\sup_{\overline{\mathcal{R}}_i\in\widehat{\Theta}_{\rm amp}(\overline{D}_i)}\overline{\mathcal{R}}_0^{\cdot k_0}\cdots\overline{\mathcal{R}}_n^{\cdot k_n}\cdot\overline{D}_{N+1}\cdots\overline{D}_d.
\end{align*}
\end{proposition}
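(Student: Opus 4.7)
The plan is to close the chain of inequalities that follows for free from the obvious inclusions
$\widehat{\Theta}_{\rm amp}(\overline{D}_i)\subseteq\widehat{\Theta}_{C^{\infty}}(\overline{D}_i)\subseteq\widehat{\Theta}_{\rm ad}(\overline{D}_i)\subseteq\widehat{\Theta}(\overline{D}_i)$,
which immediately give
\[
 \sup_{\widehat{\Theta}_{\rm amp}}\leq\sup_{\widehat{\Theta}_{C^{\infty}}}\leq\sup_{\widehat{\Theta}_{\rm ad}}\leq\sup_{\widehat{\Theta}}.
\]
Since the middle supremum over $\widehat{\Theta}_{\rm ad}$ is by definition $\langle\overline{D}_0^{\cdot k_0}\cdots\overline{D}_n^{\cdot k_n}\rangle\overline{D}_{N+1}\cdots\overline{D}_d$, the only real content is the reverse inequality $\sup_{\widehat{\Theta}}\leq\sup_{\widehat{\Theta}_{\rm amp}}$, and I would obtain it by approximating an arbitrary element of $\widehat{\Theta}(\overline{D}_i)$ by an ample one using Proposition~\ref{prop:defaposint1} as the sole nontrivial input.

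Concretely, I would fix $\overline{\mathcal{R}}_i=(\varphi_i:X_i'\to X;\overline{M}_i)\in\widehat{\Theta}(\overline{D}_i)$ for each $i$ and set $\overline{F}_i:=\varphi_i^*\overline{D}_i-\overline{M}_i$. Since $X_i'$ is generically smooth and normal by the definition of an approximation and $\overline{F}_i$ is pseudo-effective, for each $\gamma\in(0,1)$ Proposition~\ref{prop:defaposint1} applied on $X_i'$ yields a decomposition $\varphi_i^*\overline{D}_i=\overline{H}_i(\gamma)+\overline{E}_i(\gamma)$ in which $\overline{H}_i(\gamma)$ is ample, $\overline{E}_i(\gamma)$ is an effective arithmetic $\QQ$-divisor, and $\overline{H}_i(\gamma)-\gamma\overline{M}_i$ is pseudo-effective. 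The pair $\overline{\mathcal{R}}_i'(\gamma):=(\varphi_i:X_i'\to X;\overline{H}_i(\gamma))$ then lies in $\widehat{\Theta}_{\rm amp}(\overline{D}_i)$ by construction.

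Next, I would pass to a common generically smooth and normal blow-up $\pi:Y\to X$ dominating all $X_i'$ through morphisms $\psi_i:Y\to X_i'$, so that all intersection numbers live on the single arithmetic variety $Y$. Since pullback along a birational morphism of generically smooth normal projective arithmetic varieties preserves both nefness and pseudo-effectivity (the latter because $\avol$ is a birational invariant, so $\psi_i^{*}$ sends the big cone into the big cone and thus its closure into its closure), the divisors $\psi_i^*\overline{H}_i(\gamma)$ and $\gamma\psi_i^*\overline{M}_i$ are nef on $Y$ and their differences are pseudo-effective. Applying Lemma~\ref{lem:aint2}(3) factor by factor on $Y$ then produces
\[
 \overline{\mathcal{R}}_0'(\gamma)^{\cdot k_0}\cdots\overline{\mathcal{R}}_n'(\gamma)^{\cdot k_n}\cdot\overline{D}_{N+1}\cdots\overline{D}_d\geq\gamma^{N+1}\,\overline{\mathcal{R}}_0^{\cdot k_0}\cdots\overline{\mathcal{R}}_n^{\cdot k_n}\cdot\overline{D}_{N+1}\cdots\overline{D}_d,
\]
after which letting $\gamma\to 1^-$ and then taking the supremum of the left side over $\widehat{\Theta}_{\rm amp}(\overline{D}_i)$ finishes the job, since the right side ranges over all of $\widehat{\Theta}(\overline{D}_i)$.

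The main technical point will be the verification that $\psi_i^*$ really does preserve pseudo-effectivity of $C^0$-type arithmetic $\RR$-divisors --- once this is in hand, the rest of the argument is a formal combination of Proposition~\ref{prop:defaposint1} with the monotonicity statement Lemma~\ref{lem:aint2}(3), plus the continuity and multilinearity of the arithmetic intersection product.
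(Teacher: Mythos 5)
Your proposal is correct and follows essentially the same route as the paper: the key inputs are Proposition~\ref{prop:defaposint1} (to turn an arbitrary approximation into an ample one dominating $\gamma\overline{M}_i$) and Lemma~\ref{lem:aint2}~(3) (monotonicity of $\adeg$ in pseudo-effective directions), combined with the homogeneity $\gamma^{N+1}\to 1$. The only cosmetic differences are that the paper assembles all the $\overline{\mathcal{R}}_i$ on a common model from the outset, picks a concrete rational $\gamma<1$ depending on a fixed $\varepsilon$, and does not isolate the pullback-preserves-pseudo-effectivity point you flag (it is implicitly used by the paper as well, via the birational invariance of $\avol$).
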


\begin{proof}
The inequalities
\begin{align*}
 \sup_{\overline{\mathcal{R}}_i\in\widehat{\Theta}(\overline{D}_i)}\overline{\mathcal{R}}_0^{\cdot k_0}\cdots\overline{\mathcal{R}}_n^{\cdot k_n}\cdot\overline{D}_{N+1}\cdots\overline{D}_d&\geq\langle\overline{D}_0^{\cdot k_0}\cdots\overline{D}_n^{\cdot k_n}\rangle\overline{D}_{N+1}\cdots\overline{D}_d\\
 &\geq\sup_{\overline{\mathcal{R}}_i\in\widehat{\Theta}_{C^{\infty}}(\overline{D}_i)}\overline{\mathcal{R}}_0^{\cdot k_0}\cdots\overline{\mathcal{R}}_n^{\cdot k_n}\cdot\overline{D}_{N+1}\cdots\overline{D}_d\\
 &\geq\sup_{\overline{\mathcal{R}}_i\in\widehat{\Theta}_{\rm amp}(\overline{D}_i)}\overline{\mathcal{R}}_0^{\cdot k_0}\cdots\overline{\mathcal{R}}_n^{\cdot k_n}\cdot\overline{D}_{N+1}\cdots\overline{D}_d>0
\end{align*}
are trivial.
Let $\varepsilon>0$ be a sufficiently small positive real number and fix an approximation $\overline{\mathcal{R}}_i:=(\varphi:X'\to X;\overline{M}_i)\in\widehat{\Theta}(\overline{D}_i)$ for $i=0,1,\dots,n$ such that
\begin{equation}\label{eqn:adchoice}
 \overline{\mathcal{R}}_0^{\cdot k_0}\cdots\overline{\mathcal{R}}_n^{\cdot k_n}\cdot\overline{D}_{N+1}\cdots\overline{D}_d\geq\sup_{\overline{\mathcal{R}}_i\in\widehat{\Theta}(\overline{D}_i)}\overline{\mathcal{R}}_0^{\cdot k_0}\cdots\overline{\mathcal{R}}_n^{\cdot k_n}\cdot\overline{D}_{N+1}\cdots\overline{D}_d-\varepsilon>\varepsilon.
\end{equation}
Let $\gamma$ be a positive rational number such that $0<\gamma<1$ and
\begin{align}
 &\adeg((\gamma\overline{M}_0)^{\cdot k_0}\cdots(\gamma\overline{M}_n)^{\cdot k_n}\cdot\varphi^*\overline{D}_{N+1}\cdots\varphi^*\overline{D}_d) \label{eqn:amplegamma}\\
 &\qquad\qquad\qquad\qquad\qquad\qquad\qquad\geq\overline{\mathcal{R}}_0^{\cdot k_0}\cdots\overline{\mathcal{R}}_n^{\cdot k_n}\cdot\overline{D}_{N+1}\cdots\overline{D}_d-\varepsilon>0.\nonumber
\end{align}
By Proposition~\ref{prop:defaposint1}, we can find $\overline{\mathcal{R}}_i':=(\varphi:X'\to X;\overline{H}_i)\in\widehat{\Theta}_{\rm amp}(\overline{D}_i)$ such that $\overline{H}_i-\gamma\overline{M}_i$ is pseudo-effective.
Thus by using Lemma~\ref{lem:aint2} (3), we have
\begin{align}
 &{\overline{\mathcal{R}}_0'}^{\cdot k_0}\cdots{\overline{\mathcal{R}}_n'}^{\cdot k_n}\cdot\overline{D}_{N+1}\cdots\overline{D}_d \label{eqn:ampleamplechoice}\\
 &\qquad\qquad\qquad\qquad\geq\adeg((\gamma\overline{M}_0)^{\cdot k_0}\cdots(\gamma\overline{M}_n)^{\cdot k_n}\cdot\varphi^*\overline{D}_{N+1}\cdots\varphi^*\overline{D}_d). \nonumber
\end{align}
By (\ref{eqn:adchoice}), (\ref{eqn:amplegamma}), and (\ref{eqn:ampleamplechoice}), we have
\[
 \sup_{\overline{\mathcal{R}}_i'\in\widehat{\Theta}_{\rm amp}(\overline{D}_i)}{\overline{\mathcal{R}}_0'}^{\cdot k_0}\cdots{\overline{\mathcal{R}}_n'}^{\cdot k_n}\cdot\overline{D}_{N+1}\cdots\overline{D}_d\geq\sup_{\overline{\mathcal{R}}_i\in\widehat{\Theta}(\overline{D}_i)}\overline{\mathcal{R}}_0^{\cdot k_0}\cdots\overline{\mathcal{R}}_n^{\cdot k_n}\cdot\overline{D}_{N+1}\cdots\overline{D}_d-2\varepsilon
\]
for all $\varepsilon>0$.
This completes the proof of the proposition.
\end{proof}

\begin{proposition}\label{prop:aposcont}
\begin{enumerate}
\item[\textup{(1)}] Let $\overline{D}_0,\dots,\overline{D}_n,\overline{E}_0,\dots,\overline{E}_n$ be big arithmetic $\RR$-divisors, and let $\overline{D}_{n+1},\dots,\overline{D}_d,\overline{E}_{n+1},\dots,\overline{E}_d$ be nef and big arithmetic $\RR$-divisors.
If $\overline{D}_i-\overline{E}_i$ is pseudo-effective for every $i$, then
\[
 \langle\overline{D}_0\cdots\overline{D}_n\rangle\overline{D}_{n+1}\cdots\overline{D}_d\geq\langle\overline{E}_0\cdots\overline{E}_n\rangle\overline{E}_{n+1}\cdots\overline{E}_d.
\]
\item[\textup{(2)}] The map
\begin{gather*}
 \aBigCone_{\RR}(X;C^0)^{\times (n+1)}\times\aDiv_{\RR}^{\rm Nef}(X;C^0)^{\times (n-d)}\to\RR,\\
 (\overline{D}_0,\dots,\overline{D}_n;\overline{D}_{n+1},\dots,\overline{D}_d)\mapsto\langle\overline{D}_0\cdots\overline{D}_n\rangle\overline{D}_{n+1}\cdots\overline{D}_d,
\end{gather*}
is continuous in the sense that
\begin{align*}
 &\lim_{\varepsilon_{ij},\|f_j\|_{\sup}\to 0}\left\langle\left(\overline{D}_0+\sum_{i=1}^{r_0}\varepsilon_{i0}\overline{E}_{i0}+(0,f_0)\right)\cdots\left(\overline{D}_n+\sum_{i=1}^{r_n}\varepsilon_{in}\overline{E}_{in}+(0,f_n)\right)\right\rangle\\
 &\qquad\qquad\qquad\qquad\qquad\qquad\qquad\qquad\cdot\overline{D}_{n+1}\cdots\overline{D}_d=\langle\overline{D}_0\cdots\overline{D}_n\rangle\overline{D}_{n+1}\cdots\overline{D}_d
\end{align*}
for any $r_0,\dots,r_n\in\ZZ_{\geq 0}$, $\overline{E}_{10},\dots,\overline{E}_{r_nn}\in\aDiv_{\RR}(X;C^0)$, and $f_0,\dots,f_n\in C^0(X)$.
\item[\textup{(3)}] Suppose that $n=d-1$.
The map
\begin{gather*}
 \aBigCone_{\RR}(X;C^0)^{\times d}\times\aDiv^{\rm Nef}_{\RR}(X;C^0)\to\RR,\\
 (\overline{D}_0,\dots,\overline{D}_{d-1};\overline{D}_d)\mapsto\langle \overline{D}_0\cdots \overline{D}_{d-1}\rangle \overline{D}_d,
\end{gather*}
uniquely extends to a continuous map $\aBigCone_{\RR}(X;C^0)^{\times d}\times\aDiv_{\RR}(X;C^0)\to\RR$, which we also denote by $(\overline{D}_0,\dots,\overline{D}_{d-1};\overline{D}_d)\mapsto\langle \overline{D}_0\cdots \overline{D}_{d-1}\rangle \overline{D}_d$.
\item[\textup{(4)}] Let $\overline{D}_1,\dots,\overline{D}_d\in\aBigCone_{\RR}(X;C^0)$ and $\overline{E}\in\aDiv_{\RR}(X;C^0)$.
If $\overline{E}$ is pseudo-effective, then we have $\langle \overline{D}_1\cdots \overline{D}_d\rangle \overline{E}\geq 0$.
\end{enumerate}
\end{proposition}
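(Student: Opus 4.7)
The plan is to argue monotonicity for the big and nef factors separately. For the big factors (holding the nef ones fixed), I would observe that any $(\varphi;\overline{M}_i) \in \widehat{\Theta}_{\rm ad}(\overline{E}_i)$ also lies in $\widehat{\Theta}(\overline{D}_i)$, since
\[
\varphi^*\overline{D}_i - \overline{M}_i = \varphi^*(\overline{D}_i - \overline{E}_i) + (\varphi^*\overline{E}_i - \overline{M}_i)
\]
is pseudo-effective. By Proposition~\ref{prop:replacewithample}, the supremum of $\overline{\mathcal{R}}_0\cdots\overline{\mathcal{R}}_n\cdot\overline{D}_{n+1}\cdots\overline{D}_d$ over $\widehat{\Theta}(\overline{D}_i)$ coincides with that over $\widehat{\Theta}_{\rm ad}(\overline{D}_i)$, so taking sup over the subset $\widehat{\Theta}_{\rm ad}(\overline{E}_i) \subseteq \widehat{\Theta}(\overline{D}_i)$ gives $\langle\overline{E}_0\cdots\overline{E}_n\rangle\overline{D}_{n+1}\cdots\overline{D}_d \leq \langle\overline{D}_0\cdots\overline{D}_n\rangle\overline{D}_{n+1}\cdots\overline{D}_d$. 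For the nef factors (holding the big ones fixed), within each admissible approximation of the $\overline{D}_i$'s, Lemma~\ref{lem:aintkiso} permits replacing $\overline{E}_j$ by $\overline{D}_j$ one at a time inside the $\adeg$-product, since $\overline{D}_j - \overline{E}_j$ is pseudo-effective and every other factor is nef; taking sup then yields the desired inequality.

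\textbf{Plan for (2).} The plan is to combine (1) with a sandwich estimate. Fix a nef and big $\overline{A} \in \aDiv_\RR(X;C^0)$. Since $C\overline{A} \pm \overline{E}_{ij}$ is big for $C \gg 1$, and similarly for $(0, f_j)$ with $\|f_j\|_{\sup}$ bounded, for any $\delta > 0$ and all sufficiently small $|\varepsilon_{ij}|, \|f_j\|_{\sup}$, one has the pseudo-effective sandwich $-\delta\overline{A} \leq \sum_i \varepsilon_{ij}\overline{E}_{ij} + (0, f_j) \leq \delta\overline{A}$, and (1) bounds the perturbed positive intersection between $\langle(\overline{D}_0 - \delta\overline{A})\cdots(\overline{D}_n - \delta\overline{A})\rangle\overline{D}_{n+1}\cdots\overline{D}_d$ and $\langle(\overline{D}_0 + \delta\overline{A})\cdots(\overline{D}_n + \delta\overline{A})\rangle\overline{D}_{n+1}\cdots\overline{D}_d$. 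It remains to show both bounds tend to $\langle\overline{D}_0\cdots\overline{D}_n\rangle\overline{D}_{n+1}\cdots\overline{D}_d$ as $\delta \to 0^+$. For the upper bound, I would pick near-optimal $(\varphi;\overline{M}_i) \in \widehat{\Theta}_{\rm ad}(\overline{D}_i)$ and observe that $(\varphi;\overline{M}_i + \delta\varphi^*\overline{A}) \in \widehat{\Theta}(\overline{D}_i + \delta\overline{A})$ (the excess is unchanged), with intersection differing by $O(\delta)$ by multilinearity of $\adeg$. The \textbf{main obstacle} will be the symmetric lower-bound step: naive subtraction of $\delta\varphi^*\overline{A}$ from a nef $\overline{M}_i$ may fail to preserve nefness. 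I plan to address this via Proposition~\ref{prop:replacewithample} by restricting to ample approximations $(\varphi;\overline{H}_i) \in \widehat{\Theta}_{\rm amp}(\overline{D}_i)$; since the ample cone is open in its finite-dimensional ambient, $\overline{H}_i - \delta\varphi^*\overline{A}$ remains ample for $\delta$ small relative to $\overline{H}_i$, furnishing an element of $\widehat{\Theta}_{\rm amp}(\overline{D}_i - \delta\overline{A})$ with the required intersection.

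\textbf{Plan for (3).} After pulling back to a generic resolution of singularities, I may assume $X$ is generically smooth. The sandwich of (2), specialized to perturbations only of $\overline{D}_d$, produces a continuity estimate $|\langle\overline{D}_0\cdots\overline{D}_{d-1}\rangle\overline{D}_d| \leq C\|\overline{D}_d\|$ for integrable $\overline{D}_d$, where $\|\cdot\|$ controls the $\RR$-coefficients and supremum norms of Green functions. Combined with the $\RR$-linearity of the functional on $\aDiv_\RR^{\rm Nef}(X;C^0)$ from Remark~\ref{rem:aposinthomo}(1), and the density of integrable divisors in $\aDiv_\RR(X;C^0)$ (via Stone-Weierstrass smoothing of Green functions, after possibly adding large multiples of a fixed ample divisor to guarantee nefness of the smooth approximants), the bounded $\RR$-linear map extends uniquely and continuously to $\aDiv_\RR(X;C^0)$.

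\textbf{Plan for (4).} For integrable pseudo-effective $\overline{E} \in \aDiv_\RR^{\rm Nef}(X;C^0)$, each admissible approximation $(\varphi;\overline{M}_i) \in \widehat{\Theta}_{\rm ad}(\overline{D}_i)$ yields nef $\overline{M}_i$ and integrable pseudo-effective $\varphi^*\overline{E}$, so Lemma~\ref{lem:aintkiso} gives $\adeg(\overline{M}_1\cdots\overline{M}_d\cdot\varphi^*\overline{E}) \geq 0$, and taking sup yields the assertion. For general pseudo-effective $\overline{E} = (E, g)$, I plan to approximate $g$ from above by smooth $g_k \to g$ uniformly (via convolutional smoothing plus a vanishing additive constant absorbing the modulus of continuity of $g$), so that $(E, g_k) = \overline{E} + (0, g_k - g)$ is the sum of a pseudo-effective and an effective divisor (hence pseudo-effective) and integrable by the construction used in (3). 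The integrable case applied to $(E, g_k)$, together with the continuity established in (3), then yields the non-negativity in the limit.
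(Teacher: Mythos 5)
Your plan for (1) and (4) follows the paper's own route: for (1), the inclusion $\widehat{\Theta}_{\rm ad}(\overline{E}_i)\subseteq\widehat{\Theta}(\overline{D}_i)$ together with Proposition~\ref{prop:replacewithample} for the big factors, and monotonicity of $\adeg$ (Lemma~\ref{lem:aint2}\,(3), which is the cleaner citation than Lemma~\ref{lem:aintkiso}) for the nef factors; for (4), reduction to the integrable case by uniform approximation from above and then non-negativity of $\adeg$ of a pseudo-effective against nefs. Both match the paper.

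For (2) your additive sandwich $\overline{D}_i-\delta\overline{A}\leq\overline{D}_i+\text{pert}_i\leq\overline{D}_i+\delta\overline{A}$ replaces the paper's multiplicative sandwich, but there is a directional slip in the upper-bound step. You claim the hard part is the lower endpoint and handle it correctly with ample approximations, but the lower endpoint satisfies $\langle(\overline{D}_0-\delta\overline{A})\cdots\rangle\leq\langle\overline{D}_0\cdots\rangle$ automatically by (1); the genuinely non-trivial step is showing $\limsup_{\delta\to 0}\langle(\overline{D}_0+\delta\overline{A})\cdots\rangle\leq\langle\overline{D}_0\cdots\rangle$, and your construction $(\varphi;\overline{M}_i+\delta\varphi^*\overline{A})\in\widehat{\Theta}(\overline{D}_i+\delta\overline{A})$ gives only a lower bound on $\langle(\overline{D}_0+\delta\overline{A})\cdots\rangle$, which is redundant. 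The missing inequality is exactly what the paper's homogeneity argument supplies: since each $\overline{D}_i$ is big, choose $c>0$ with $c\overline{D}_i-\overline{A}$ pseudo-effective for all $i$; then $\overline{D}_i+\delta\overline{A}\leq(1+c\delta)\overline{D}_i$ pseudo-effectively, so by (1) and homogeneity $\langle(\overline{D}_0+\delta\overline{A})\cdots\rangle\leq(1+c\delta)^{n+1}\langle\overline{D}_0\cdots\rangle$. The paper simply runs the whole proof with the pair $(1\pm\gamma)\overline{D}_i$ instead of $\overline{D}_i\pm\delta\overline{A}$ and thereby sidesteps the additive-sandwich asymmetry.

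For (3), your reference to ``the sandwich of (2), specialized to perturbations only of $\overline{D}_d$'' is misplaced: (2) perturbs the big factors $\overline{D}_0,\dots,\overline{D}_n$, not the nef slot $\overline{D}_d$, so it cannot directly produce the needed Lipschitz estimate in $\overline{D}_d$. The right model is the Cauchy argument in Lemma~\ref{lem:aintkiso}, which the paper cites: for integrable $(0,f)$, the filtered nature of $\widehat{\Theta}_{\rm ad}$ yields $|\langle\overline{D}_0\cdots\overline{D}_{d-1}\rangle(0,f)|\leq\frac{1}{2}\|f\|_{\sup}\,\langle D_{0,\QQ}\cdots D_{d-1,\QQ}\rangle$, from which the Cauchy property of $\langle\overline{D}_0\cdots\overline{D}_{d-1}\rangle(\overline{D}_d+(0,f_n))$ and the unique continuous extension follow. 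Your bounded-linear-extension scheme (density via Stone--Weierstrass, adding large ample multiples for integrability) is the correct underlying mechanism; it is only the attribution of the key estimate to (2) that is off.

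In sum, the architecture matches the paper's; the two substantive issues are the wrong-direction upper bound in (2) (fixable by homogeneity, which the paper uses) and the misattribution in (3) (the estimate comes from the Lemma~\ref{lem:aintkiso} template, not from (2)).
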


\begin{proof}
(1): Since $\widehat{\Theta}(\overline{D}_i)\supseteq\widehat{\Theta}(\overline{E}_i)$ for $i=0,1,\dots,n$, the assertion follows from Lemma~\ref{lem:aint2} (3).

(2): We can assume that $\overline{D}_{n+1},\dots,\overline{D}_d$ are all nef.
Moreover, by using (1), we can assume that $f_0,\dots,f_n$ are all zero functions.
Suppose that $\varepsilon_{ij}$ are all sufficiently small.
Then by (1) and the homogeneity (Remark~\ref{rem:aposinthomo} (1)), we can choose a sufficiently small $\gamma$ with $0<\gamma<1$ such that
\begin{multline*}
 (1-\gamma)^n\langle\overline{D}_0\cdots\overline{D}_n\rangle\overline{D}_{n+1}\cdots\overline{D}_d\\
 \leq\left\langle\left(\overline{D}_0+\sum_{i=1}^{r_0}\varepsilon_{i0}\overline{E}_{i0}\right)\cdots\left(\overline{D}_n+\sum_{i=1}^{r_n}\varepsilon_{in}\overline{E}_{in}\right)\right\rangle\overline{D}_{n+1}\cdots\overline{D}_d\\
 \leq(1+\gamma)^n\langle\overline{D}_0\cdots\overline{D}_n\rangle\overline{D}_{n+1}\cdots\overline{D}_d
\end{multline*}
(see \cite[Proof of Proposition~2.9]{Bou_Fav_Mat06} and \cite[Proof of Proposition~3.6]{Chen11}).
Hence we conclude.

(3), (4): We can use the same argument as in Lemma~\ref{lem:aintkiso} (see \cite[\S 3.3, Remark~8]{Chen11}).
\end{proof}

\begin{proposition}\label{prop:avollogconcave}
\begin{enumerate}
\item[\textup{(1)}] For $\overline{D}\in\aBigCone_{\RR}(X;C^0)$, we have $\avol(\overline{D})=\langle\overline{D}^{\cdot (d+1)}\rangle$.
\item[\textup{(2)}] Let $\overline{D},\overline{E}\in\aBigCone_{\RR}(X;C^0)$.
We have
\[
 \avol(\overline{D}+\overline{E})\geq\sum_{i=0}^{d+1}\binom{d+1}{i}\langle\overline{D}^{\cdot i}\cdot\overline{E}^{d-i+1}\rangle.
\]
\item[\textup{(3)}] Let $\overline{D},\overline{E}\in\aBigCone_{\RR}(X;C^0)$.
Then the function $i\mapsto\log\langle\overline{D}^{\cdot i}\cdot\overline{E}^{d-i+1}\rangle$ is concave: that is, for any $i$ with $1\leq i\leq d$, we have
\[
 \langle\overline{D}^{\cdot i}\cdot\overline{E}^{d-i+1}\rangle^2\geq \langle\overline{D}^{\cdot i-1}\cdot\overline{E}^{\cdot d-i+2}\rangle\cdot\langle\overline{D}^{\cdot i+1}\cdot\overline{E}^{\cdot d-i}\rangle.
\]
In particular, we have
\[
 \langle\overline{D}^{\cdot i}\cdot\overline{E}^{d-i+1}\rangle\geq\avol(\overline{D})^\frac{i}{d+1}\cdot\avol(\overline{E})^{\frac{d-i+1}{d+1}}
\]
for $i$ with $1\leq i\leq d-1$, and
\[
 \langle\overline{D}^{\cdot d}\rangle\overline{E}\geq\langle\overline{D}^{\cdot d}\cdot\overline{E}\rangle\geq\avol(\overline{D})^\frac{d}{d+1}\cdot\avol(\overline{E})^{\frac{1}{d+1}}.
\]
\item[\textup{(4)}] Let $\overline{D},\overline{E},\overline{D}_k,\dots,\overline{D}_n\in\aBigCone_{\RR}(X;C^0)$ and $\overline{D}_{n+1},\dots,\overline{D}_d\in\aNef_{\RR}(X;C^0)$.
Then we have
\begin{align*}
 &\Bigl(\langle (\overline{D}+\overline{E})^{\cdot k}\cdot\overline{D}_k\cdots\overline{D}_n\rangle \overline{D}_{n+1}\cdots\overline{D}_d\Bigr)^{\frac{1}{k}} \\
 &\qquad\geq\left(\langle\overline{D}^{\cdot k}\cdot\overline{D}_k\cdots\overline{D}_n\rangle\overline{D}_{n+1}\cdots\overline{D}_d\right)^{\frac{1}{k}}+\left(\langle\overline{E}^{\cdot k}\cdot\overline{D}_k\cdots\overline{D}_n\rangle\overline{D}_{n+1}\cdots\overline{D}_d\right)^{\frac{1}{k}}.
\end{align*}
\end{enumerate}
\end{proposition}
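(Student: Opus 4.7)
The plan is to treat the four assertions in order, with (1) playing the foundational role and the remaining three being consequences of the arithmetic Khovanskii-Teissier inequalities (Theorem~\ref{thm:aKT}) together with the filtered property of $\widehat{\Theta}_{\rm ad}(\overline{D})$ established in Proposition~\ref{prop:defaposint2}.

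For (1), the inequality $\avol(\overline{D})\geq\langle\overline{D}^{\cdot(d+1)}\rangle$ is the easy direction: for any $(\varphi:X'\to X;\overline{M})\in\widehat{\Theta}_{\rm ad}(\overline{D})$, the birational invariance of $\avol$ together with the fact that $\varphi^*\overline{D}-\overline{M}$ is effective gives $\avol(\overline{D})=\avol(\varphi^*\overline{D})\geq\avol(\overline{M})$, and since $\overline{M}$ is nef, the arithmetic Hilbert-Samuel formula yields $\avol(\overline{M})=\adeg(\overline{M}^{\cdot(d+1)})$; taking the supremum gives the bound. The converse inequality is the main obstacle and is precisely the content of the arithmetic Fujita approximation theorem of Chen/Yuan, which I would invoke as a black box: for any $\varepsilon>0$ there exists an admissible approximation $\overline{\mathcal{R}}\in\widehat{\Theta}_{\rm ad}(\overline{D})$ with $\overline{\mathcal{R}}^{\cdot(d+1)}\geq\avol(\overline{D})-\varepsilon$.

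For (2), given $\varepsilon>0$, by Proposition~\ref{prop:defaposint2} and a common-refinement argument one can find admissible approximations $(\varphi:X'\to X;\overline{M}_D)\in\widehat{\Theta}_{\rm ad}(\overline{D})$ and $(\varphi:X'\to X;\overline{M}_E)\in\widehat{\Theta}_{\rm ad}(\overline{E})$ on a single blow-up such that every mixed intersection $\adeg(\overline{M}_D^{\cdot i}\cdot\overline{M}_E^{\cdot(d-i+1)})$ is within $\varepsilon$ of $\langle\overline{D}^{\cdot i}\cdot\overline{E}^{\cdot(d-i+1)}\rangle$ simultaneously (this is where Lemma~\ref{lem:aint2}(3) together with the fact that a common refinement can be taken for finitely many approximations is used). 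Then $(\varphi;\overline{M}_D+\overline{M}_E)$ is an admissible approximation of $\overline{D}+\overline{E}$, so by (1) and multilinearity of $\adeg$,
\[
 \avol(\overline{D}+\overline{E})\geq\adeg\bigl((\overline{M}_D+\overline{M}_E)^{\cdot(d+1)}\bigr)=\sum_{i=0}^{d+1}\binom{d+1}{i}\adeg(\overline{M}_D^{\cdot i}\cdot\overline{M}_E^{\cdot(d-i+1)}),
\]
and letting $\varepsilon\to 0$ yields the claim.

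For (3), I would apply Theorem~\ref{thm:aKT}(1) to the nef arithmetic $\RR$-divisors $\overline{M}_D,\overline{M}_E$ coming from a common admissible approximation, obtaining
\[
 \bigl(\adeg(\overline{M}_D^{\cdot i}\cdot\overline{M}_E^{\cdot(d-i+1)})\bigr)^2\geq\adeg(\overline{M}_D^{\cdot(i-1)}\cdot\overline{M}_E^{\cdot(d-i+2)})\cdot\adeg(\overline{M}_D^{\cdot(i+1)}\cdot\overline{M}_E^{\cdot(d-i)}),
\]
and then passing to the supremum via the same filtered simultaneous-approximation trick as in (2). The corollary inequalities then follow by iterating the log-concavity together with (1). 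The inequality $\langle\overline{D}^{\cdot d}\rangle\overline{E}\geq\langle\overline{D}^{\cdot d}\cdot\overline{E}\rangle$ uses the fact that for an admissible approximation $(\varphi;\overline{M}_E)$ of $\overline{E}$, the divisor $\varphi^*\overline{E}-\overline{M}_E$ is effective, hence pseudo-effective, so Lemma~\ref{lem:aint2}(2) gives $\adeg(\overline{M}_D^{\cdot d}\cdot\varphi^*\overline{E})\geq\adeg(\overline{M}_D^{\cdot d}\cdot\overline{M}_E)$ on a common blow-up. Finally (4) is obtained analogously from Theorem~\ref{thm:aKT}(4), again by choosing common-refinement approximations of each of $\overline{D},\overline{E},\overline{D}_k,\dots,\overline{D}_n$ simultaneously approaching the relevant positive intersection numbers, taking the $k$-th root, and letting the approximations exhaust the sup. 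The main technical obstacle throughout is the simultaneous-approximation step, which rests essentially on Proposition~\ref{prop:defaposint2} and the monotonicity Lemma~\ref{lem:aint2}(3).
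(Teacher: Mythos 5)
Your proposal is correct and follows essentially the same route as the paper: the easy direction of (1) by pullback and Hilbert--Samuel for nef divisors, the hard direction by arithmetic Fujita approximation; (2) from multilinearity and the filtered property of $\widehat{\Theta}_{\rm ad}$; (3) and (4) by applying Theorem~\ref{thm:aKT} on a common blow-up to pairs of admissible approximations and passing to the supremum. One small precision worth noting in (1): the arithmetic Fujita approximation theorem of Chen/Yuan is formulated for big arithmetic $\QQ$-divisors, so for an arithmetic $\RR$-divisor the paper first chooses a big arithmetic $\QQ$-divisor $\overline{D}'$ with $\overline{D}-\overline{D}'$ effective and $\avol(\overline{D}')\geq\avol(\overline{D})-\varepsilon$ (possible by continuity of $\avol$), applies Fujita to $\overline{D}'$, and then uses $\widehat{\Theta}_{\rm ad}(\overline{D}')\subseteq\widehat{\Theta}_{\rm ad}(\overline{D})$; your direct ``black box'' invocation for the $\RR$-divisor $\overline{D}$ glosses over this reduction, though it is routine.
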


\begin{proof}
(1): The inequality $\avol(\overline{D})\geq\langle\overline{D}^{\cdot (d+1)}\rangle$ is clear.
For any $\varepsilon>0$, one can find a big arithmetic $\QQ$-divisor $\overline{D}'$ such that $\overline{D}-\overline{D}'$ is effective and
\begin{equation}\label{eqn:afujita1}
 \avol(\overline{D}')+\varepsilon\geq\avol(\overline{D}).
\end{equation}
By the arithmetic Fujita approximation \cite{Chen11, Yuan09}, there exists an admissible approximation $(\varphi;\overline{M})\in\widehat{\Theta}_{\rm ad}(\overline{D}')$ such that
\begin{equation}\label{eqn:afujita2}
 \langle{\overline{D}'}^{\cdot (d+1)}\rangle+\varepsilon\geq\avol(\overline{M})+\varepsilon\geq\avol(\overline{D}').
\end{equation}
By (\ref{eqn:afujita1}) and (\ref{eqn:afujita2}), we have $\langle\overline{D}^{\cdot (d+1)}\rangle+2\varepsilon\geq\avol(\overline{D})$ for all $\varepsilon>0$ as desired.

(2): Let $\overline{\mathcal{R}}:=(\varphi:X'\to X;\overline{M})\in\widehat{\Theta}_{\rm ad}(\overline{D})$ and $\overline{\mathcal{S}}:=(\varphi:X'\to X;\overline{N})\in\widehat{\Theta}_{\rm ad}(\overline{E})$.
We have
\begin{align*}
 \avol(\overline{D}+\overline{E})&=\langle(\overline{D}+\overline{E})^{\cdot (d+1)}\rangle\geq \adeg\left((\overline{M}+\overline{N})^{\cdot (d+1)}\right)\\
 &=\sum_{i=0}^{d+1}\binom{d+1}{i}\adeg(\overline{M}^{\cdot i}\cdot\overline{N}^{\cdot (d-i+1)}).
\end{align*}
On the other hand, by using Proposition~\ref{prop:defaposint2}, we can see that
\[
 \sup_{\substack{\overline{\mathcal{R}}\in\widehat{\Theta}_{\rm ad}(\overline{D}) \\ \overline{\mathcal{S}}\in\widehat{\Theta}_{\rm ad}(\overline{E})}}\left\{\sum_{i=0}^{d+1}\binom{d+1}{i}\adeg(\overline{M}^{\cdot i}\cdot\overline{N}^{\cdot (d-i+1)})\right\}=\sum_{i=0}^{d+1}\binom{d+1}{i}\langle\overline{D}^{\cdot i}\cdot\overline{E}^{d-i+1}\rangle.
\]

(3): The first and the second inequalities follow from Theorem~\ref{thm:aKT} (2) and (3), respectively.
The last assertion follows from Proposition~\ref{prop:aposcont} (4).

(4): This follows from Theorem~\ref{thm:aKT} (4).
\end{proof}

%%%
\section{Limit expression}

In this section, we would like to give a limit expression for arithmetic positive intersection numbers (Proposition~\ref{prop:aposint}), which are closely related to the asymptotic intersection numbers of moving parts restricted to the strict transforms studied by Ein-Lazarsfeld-Musta\c{t}\u{a}-Nakamaye-Popa \cite[Definition~2.6]{Ein_Laz_Mus_Nak_Pop06}.
We shall use Proposition~\ref{prop:aposint} in a proof of Corollary~\ref{cor:avollimasymortho} but these results do not affect the main part of this paper, namely the proof of Theorems~A and B.

Suppose that $X$ is generically smooth, and let $\overline{D}$ be a big arithmetic $\QQ$-divisor of $C^0$-type.
For an integer $m\geq 1$ and for each function $P:\ZZ_{>0}\to\RR$ such that, for any $\delta>0$,
\[
 \exp(-m\delta)\leq P(m)\leq\exp(m\delta)
\]
holds for every sufficiently large $m\geq 1$ (for example, $P$ is a positive polynomial function), we construct a suitable birational morphism $\mu_m:X_m\to X$ and a decomposition $\mu_m^*(m\overline{D})=\overline{A}^P(m\overline{D})+\overline{B}^P(m\overline{D})$ into a sum of a ``moving part'' $\overline{A}^P(m\overline{D})$ and a ``fixed part'' $\overline{B}^P(m\overline{D})$.
In Proposition~\ref{prop:aposint}, we shall show that an arithmetic positive intersection number can be written as a limit of arithmetic intersection numbers with respect to the moving parts.

\begin{lemma}\label{lem:ampleext}
Let $M$ be a complex projective manifold of dimension $d$, and let $\overline{L}$ be a $C^{\infty}$ Hermitian holomorphic line bundle on $M$.
Suppose that $\overline{L}$ is positive.
Then, for any $\varepsilon>0$, there exists a positive integer $k_{\varepsilon}\geq 1$ such that, for any $k$ with $k\geq k_{\varepsilon}$ and for any $x\in M$, there exists a section $l^x\in\Hz(M,kL)$ such that
\[
 \|l^x\|_{\sup}^{k\overline{L}}\leq \exp(k\varepsilon)|l^x|_{k\overline{L}}(x).
\]
\end{lemma}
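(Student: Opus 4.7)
The plan is to construct $l^x$ as a Bergman-type peak section and to bound the ratio $\|l^x\|_{\sup}^{k\overline{L}}/|l^x|_{k\overline{L}}(x)$ by uniform asymptotics of the Bergman kernel. Fix a smooth positive volume form $dV$ on $M$ and equip $\Hz(M,kL)$ with the induced $L^2$-inner product
\[
 \langle s,t\rangle_{L^2,k}:=\int_M\langle s,t\rangle_{k\overline{L}}\,dV.
\]
Let $\{s_j\}_{j=1}^{N_k}$ be an $L^2$-orthonormal basis of $\Hz(M,kL)$ and consider the Bergman kernel function $B_k(y):=\sum_{j=1}^{N_k}|s_j|_{k\overline{L}}^2(y)$, which is intrinsic to $\overline{L}$ and $dV$.

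For each $x\in M$ with $B_k(x)>0$, pick a local trivialization $e$ of $kL$ near $x$, write $s_j=f_j\,e$, and set
\[
 l^x:=\frac{1}{\sqrt{B_k(x)}}\sum_{j=1}^{N_k}\overline{f_j(x)}\,s_j\in\Hz(M,kL).
\]
A different trivialization alters $l^x$ only by a unit modulus scalar. A direct computation yields $|l^x|_{k\overline{L}}^2(x)=B_k(x)$, while the pointwise Cauchy--Schwarz inequality gives $|l^x|_{k\overline{L}}^2(y)\leq B_k(y)$ for every $y\in M$. Hence
\[
 \frac{\|l^x\|_{\sup}^{k\overline{L}}}{|l^x|_{k\overline{L}}(x)}\leq\sqrt{\frac{\sup_{y\in M}B_k(y)}{B_k(x)}}.
\]

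It remains to prove that the right-hand side is at most $\exp(k\varepsilon)$ for all sufficiently large $k$, uniformly in $x$. A standard mean-value inequality for holomorphic sections of $kL$, applied on coordinate balls of radius comparable to $1/\sqrt{k}$, yields a universal upper bound $\sup_{y\in M}B_k(y)\leq C_1 k^d$ for every $k\geq 1$. For the matching lower bound one invokes the positivity of $\overline{L}$: either via the Tian--Zelditch--Catlin uniform asymptotic expansion $B_k(x)=c_0 k^d+O(k^{d-1})$ on $M$, or more directly via H\"ormander's $L^2$-method with a plurisubharmonic weight acquiring a logarithmic singularity at $x$, producing a section of $kL$ nonvanishing at $x$ with controlled $L^2$-norm. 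Either route delivers $B_k(x)\geq C_2 k^d$ uniformly on $M$ for all $k\gg 1$. Combining both estimates bounds the above ratio by $\sqrt{C_1/C_2}$, independently of $k$ and $x$, hence eventually by $\exp(k\varepsilon)$ for any fixed $\varepsilon>0$. The main technical obstacle is the uniform lower bound on $B_k$, which is precisely where the hypothesis of positivity of $\overline{L}$ is indispensable.
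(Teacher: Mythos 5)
Your construction and the required bounds are correct, but you should note that the identities $|l^x|^2_{k\overline{L}}(x)=B_k(x)$ and $|l^x|^2_{k\overline{L}}(y)\leq B_k(y)$ presuppose a trivialization $e$ with $|e|_{k\overline{L}}(x)=1$; for a general trivialization $l^x$ is only determined up to a nonzero scalar of modulus $1/|e|(x)$, not unit modulus as stated. Fortunately the ratio $\|l^x\|_{\sup}^{k\overline{L}}/|l^x|_{k\overline{L}}(x)$ is scale-invariant, so the conclusion is unaffected; just add the normalization when you state the two pointwise identities.

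The route you take is genuinely different from the paper's. You build the peak section $l^x$ explicitly from the reproducing kernel and reduce everything to the two-sided bound $C_2 k^d\leq B_k\leq C_1 k^d$ on the Bergman kernel function, invoking Tian--Zelditch--Catlin (or H\"ormander's $L^2$ estimates) for the lower bound and a mean-value/Gromov estimate for the upper bound. The paper instead passes through the Kodaira embedding $\phi_k:M\to\PP(\Hz(M,kL))$, endows the hyperplane bundle with the Fubini--Study metric induced by an $L^2$-orthonormal basis, and uses (i) the Gromov inequality to compare $\|\cdot\|_{\sup}$ with $\|\cdot\|_{L^2}$, (ii) the Tian--Bouche theorem that $\tfrac1k\log(|\cdot|_{k\overline{L}}/|\cdot|_{\overline{kL}^{\rm FS}})\to 0$ uniformly (which is exactly the statement $\tfrac1k\log B_k\to 0$), and (iii) the key trick that the FS metric is $SU(r_k+1)$-invariant: it fixes a single extremal pair $(l_0,x_0)$ and produces $l^x$ by pulling $l_0$ back along a special unitary transformation moving $\phi_k(x)$ to $\phi_k(x_0)$. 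Both proofs ultimately rest on Tian-type asymptotics of $B_k$; the $SU$-invariance argument is the paper's way of avoiding an explicit uniform lower bound, while your peak-section argument uses the full two-sided bound and in fact yields the stronger conclusion that the ratio is bounded by a constant $\sqrt{C_1/C_2}$ independent of $k$, not merely by $\exp(k\varepsilon)$.
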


\begin{proof}
Let $\Phi_M$ be the normalized volume form associated to $c_1(\overline{L})$ and consider the $L^2$-norms, $\|\cdot\|_{L^2,\Phi_M}^{k\overline{L}}$, on $\Hz(M,kL)$.
By the Gromov inequality \cite[Theorem~3.4]{Yuan07}, one can compare $\|\cdot\|_{\sup}^{k\overline{L}}$ with $\|\cdot\|_{L^2,\Phi_M}^{k\overline{L}}$ as
\[
 \|\cdot\|_{L^2,\Phi_M}^{k\overline{L}}\leq\|\cdot\|_{\sup}^{k\overline{L}}\leq G(m+1)^d\|\cdot\|_{L^2,\Phi_M}^{k\overline{L}},
\]
where $G>0$ is a positive constant.
Denote $r_k:=\dim_{\CC}\Hz(M,kL)-1$.
Let $\phi_k:M\to P_k:=\PP(\Hz(M,kL))$ be a closed immersion associated to $|kL|$ for $k\gg 1$, and let $\mathcal{O}_{P_k}(1)$ be the hyperplane line bundle on $P_k$.
For each $k$, we fix an $L^2$-orthonormal basis for $\Hz(M,kL)$ with respect to $\|\cdot\|_{L^2,\Phi_M}^{k\overline{L}}$, and endow $\mathcal{O}_{P_k}(1)$ with the Fubini-Study metric induced from this basis.
We set $\overline{(kL)}^{\rm FS}:=\phi_k^*\overline{\mathcal{O}}_{P_k}^{\rm FS}(1)$.
Note that $\overline{\mathcal{O}}_{P_k}^{\rm FS}(1)$ is invariant under the special unitary group $SU(r_k+1)$.
By the theorem of Tian-Bouche (\cite[Theorem~A]{Tian90}, \cite[Th\'eor\`eme principal]{Bouche90}), $\log(|\cdot|_{k\overline{L}}/|\cdot|_{\overline{kL}^{\rm FS}})/k$ uniformly converges to 0 as $k\to\infty$.
There exists a $k_{\varepsilon}\geq 1$ such that, for every $k\geq k_{\varepsilon}$ and for every $x\in M$,
\begin{eqnarray}
 & G(m+1)^d\leq\exp(k\varepsilon/3), \\
 & \|\cdot\|_{\sup}^{k\overline{L}}\leq\exp(k\varepsilon/3)\|\cdot\|_{\sup}^{\overline{kL}^{\rm FS}},
\end{eqnarray}
and
\begin{equation}
 |\cdot|_{\overline{kL}^{\rm FS}}(x)\leq\exp(k\varepsilon/3)|\cdot|_{k\overline{L}}(x).
\end{equation}
Let $k\geq k_{\varepsilon}$ and fix a non-zero section $l_0\in\Hz(M,kL)$ and a closed point $x_0\in M$ such that the function $|l_0|_{\overline{kL}^{\rm FS}}$ attains its maximum at $x_0$, that is, $\|l_0\|_{\sup}^{\overline{kL}^{\rm FS}}=|l_0|_{\overline{kL}^{\rm FS}}(x_0)$.
Given any point $x\in M$, one can find a special unitary transform $g^x\in SU(r_k+1)$ such that $g^x(\phi_k(x))=\phi_k(x_0)$ and set $l:=l^x:=(g^x\circ\phi_k)^*l_0\in\Hz(X,kL)$.
Then we have $\|l\|_{L^2,\Phi_M}^{k\overline{L}}=\|l_0\|_{L^2,\Phi_M}^{k\overline{L}}$ and $|l|_{\overline{kL}^{\rm FS}}(x)=|l_0|_{\overline{kL}^{\rm FS}}(x_0)$.
All in all, we have
\begin{align*}
 \|l\|_{\sup}^{k\overline{L}} &\leq\exp(k\varepsilon/3)\|l\|_{L^2,\Phi_M}^{k\overline{L}}=\exp(k\varepsilon/3)\|l_0\|_{L^2,\Phi_M}^{k\overline{L}} \\
 &\leq\exp(k\varepsilon/3)\|l_0\|_{\sup}^{k\overline{L}}\leq\exp(2k\varepsilon/3)\|l_0\|_{\sup}^{\overline{kL}^{\rm FS}} \\
 &=\exp(2k\varepsilon/3)|l_0|_{\overline{kL}^{\rm FS}}(x_0)=\exp(2k\varepsilon/3)|l|_{\overline{kL}^{\rm FS}}(x)\leq\exp(k\varepsilon)|l|_{k\overline{L}}(x).
\end{align*}
\end{proof}

To obtain the limit expression, we use the method of distortion functions developed by Yuan \cite{Yuan07} and Moriwaki \cite{Moriwaki12a}.
Fix a normalized volume form $\Phi_X$ on $X(\CC)$.
For all $m\geq 1$, we consider the $L^2$-norms, $\|\cdot\|_{L^2,\Phi_X}^{m\overline{D}}$, with respect to $\Phi_X$ on $\Hz(X,mD)\otimes_{\ZZ}\CC$.
Let $r_m:=\rk\mathrm{F}^0(X,m\overline{D})-1$ and choose an $L^2$-orthonormal basis $(e_0,\dots,e_{r_m})$ for $\mathrm{F}^0(X,m\overline{D})\otimes_{\ZZ}\CC$.
The \emph{distortion function} with respect to $\mathrm{F}^0(X,m\overline{D})\otimes_{\ZZ}\CC$ is defined as
\begin{equation}
 \bm{\mathsf{B}}^0(m\overline{D})(x):=|e_0|_{m\overline{D}}^2(x)+\dots+|e_{r_m}|_{m\overline{D}}^2(x)
\end{equation}
for $x\in X(\CC)$, which does not depend on the choice of the $L^2$-orthonormal basis.

\begin{lemma}[\textup{\cite[Theorem~3.2.3]{Moriwaki12a}}]\label{lem:Moriwakidist}
There exists a positive constant $C>0$ having the following two properties:
\begin{enumerate}
\item[\textup{(i)}] $\bm{\mathsf{B}}^0(p\overline{D})(x)\leq C(p+1)^{3d}$ and
\item[\textup{(ii)}] $\displaystyle{\frac{\bm{\mathsf{B}}^0(p\overline{D})(x)}{C(p+1)^{3d}}\cdot\frac{\bm{\mathsf{B}}^0(q\overline{D})(x)}{C(q+1)^{3d}}\leq\frac{\bm{\mathsf{B}}^0((p+q)\overline{D})(x)}{C(p+q+1)^{3d}}}$
\end{enumerate}
for all $x\in X(\CC)$ and $p,q\geq 1$.
\end{lemma}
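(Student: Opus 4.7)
The plan is to work from the standard variational characterization
\[
\bm{\mathsf{B}}^0(m\overline{D})(x) = \sup\left\{\frac{|s(x)|_{m\overline{D}}^2}{\bigl(\|s\|_{L^2,\Phi_X}^{m\overline{D}}\bigr)^2}\, :\, s \in \mathrm{F}^0(X, m\overline{D})_{\CC}\setminus\{0\}\right\},
\]
which identifies $\bm{\mathsf{B}}^0(m\overline{D})(x)$ with the squared operator norm of the evaluation functional at $x$ on the $L^2$-Hilbert space $\mathrm{F}^0(X,m\overline{D})_{\CC}$. Property~(i) is then a global sup-vs-$L^2$ comparison $(\|\cdot\|_{\sup}^{m\overline{D}})^2 \leq C(m+1)^{3d}(\|\cdot\|_{L^2,\Phi_X}^{m\overline{D}})^2$ on $\mathrm{F}^0(X, m\overline{D})_{\CC}$, while (ii) is a submultiplicative bound in $m$ after rescaling by $C(m+1)^{3d}$.

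For (i), I would invoke an arithmetic Gromov-type inequality. For a smooth positive Hermitian metric on an ample $\QQ$-divisor the classical Gromov inequality gives a bound of order $(m+1)^d$. Since $g_{\overline{D}}$ is only $C^0$ and $\overline{D}$ is only big, I would first write $\overline{D}$ as the difference of adequate arithmetic divisors (cf.\ Proposition~\ref{prop:defaposint1}) and approximate its Green function from above and below by smooth semipositive ones via \cite[Theorem~1]{Blocki_Kolo07} or \cite[Theorem~4.6]{Moriwaki12a}. Tracking the resulting exponential errors --- themselves polynomial in $m$ --- inflates the final exponent from $d$ to $3d$.

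For (ii), I would exploit the multiplication map $\mathrm{F}^0(X, p\overline{D})_{\CC}\otimes\mathrm{F}^0(X, q\overline{D})_{\CC}\to\mathrm{F}^0(X,(p+q)\overline{D})_{\CC}$, which is well defined since products of sections of sup-norm $\leq 1$ still have sup-norm $\leq 1$, together with the additivity of the Green function:
\[
|s_1 s_2|_{(p+q)\overline{D}}^2(x) = |s_1|_{p\overline{D}}^2(x)\cdot|s_2|_{q\overline{D}}^2(x).
\]
I would then bound $\|s_1 s_2\|_{L^2,\Phi_X}^2 = \int_{X(\CC)}|s_1|^2|s_2|^2\,\Phi_X$ by combining the pointwise estimate $|s_i(y)|^2 \leq \bm{\mathsf{B}}^0(p_i\overline{D})(y)\cdot(\|s_i\|_{L^2,\Phi_X})^2$ with (i), take the supremum over $s_1, s_2$ via the variational formula above, and rearrange the polynomial denominators using the elementary $(p+q+1)\leq(p+1)(q+1)$ to match the statement, possibly after enlarging $C$.

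The main obstacle is aligning the polynomial factors in (ii) exactly: a naive tensor estimate yields an asymmetric denominator of the form $(p+1)^{3d}$ or $(q+1)^{3d}$, whereas the inequality as stated requires the symmetric $(p+q+1)^{3d}$, and bridging this gap calls for a careful H\"older balance between $s_1$ and $s_2$ combined with the freedom to enlarge $C$. Since this lemma is quoted verbatim from \cite[Theorem~3.2.3]{Moriwaki12a}, one may in practice simply cite that reference for a complete proof.
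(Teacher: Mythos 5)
The paper gives no proof of this lemma at all --- it simply cites \cite[Theorem~3.2.3]{Moriwaki12a}, so there is no in-paper argument to compare your sketch against. Your reconstruction of the structure is sound: the variational characterization of $\bm{\mathsf{B}}^0$ as the squared operator norm of evaluation, Gromov's inequality plus smooth approximation for (i), and the multiplication map $\mathrm{F}^0(X,p\overline{D})\otimes\mathrm{F}^0(X,q\overline{D})\to\mathrm{F}^0(X,(p+q)\overline{D})$ together with the reproducing-kernel bound $|s(y)|^2\le\bm{\mathsf{B}}^0(m\overline{D})(y)\,\|s\|_{L^2}^2$ for (ii). This is the natural route.

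You overstate, however, the difficulty in closing (ii), and the fix you gesture at (``a careful H\"older balance'') is unnecessary. Rearranging (ii) gives the target
\[
\bm{\mathsf{B}}^0(p\overline{D})(x)\,\bm{\mathsf{B}}^0(q\overline{D})(x)
\;\le\; C\,\frac{(p+1)^{3d}(q+1)^{3d}}{(p+q+1)^{3d}}\,\bm{\mathsf{B}}^0((p+q)\overline{D})(x),
\]
and the coefficient $\frac{(p+1)^{3d}(q+1)^{3d}}{(p+q+1)^{3d}}$ is \emph{not} of the order $(p+q+1)^{3d}$; it sits between $1$ and $\min\bigl((p+1)^{3d},(q+1)^{3d}\bigr)$. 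The naive tensor estimate --- applying the pointwise RKHS bound together with (i) to whichever of $s_1,s_2$ has the smaller degree, say $p\le q$ --- already yields the coefficient $C_0(p+1)^{3d}$. Since $p+q+1\le 2(q+1)$, one has $\frac{(p+1)^{3d}(q+1)^{3d}}{(p+q+1)^{3d}}\ge 2^{-3d}(p+1)^{3d}$, so the two coefficients differ by at most the universal factor $2^{3d}$. Taking $C:=2^{3d}C_0$ (which leaves (i) intact and, because $C$ appears quadratically in the denominator of the left side of (ii) but only linearly on the right, only makes (ii) easier) closes the gap with no interpolation whatsoever. Your sketch for (i) is plausible but leaves the mechanism producing the specific exponent $3d$ vague; since that exponent is what Moriwaki's Theorem~3.2.3 actually delivers and the paper quotes it verbatim, citing Moriwaki is indeed the appropriate way to discharge this lemma.
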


Suppose that $m\overline{D}\in\aBigCone(X;C^0)$.
Let $\mathfrak{b}^0(m\overline{D}):=\Image(\mathrm{F}^0(X,m\overline{D})\otimes_{\ZZ}\mathcal{O}_X(-mD)\to\mathcal{O}_X)$, and let $\mu_m:X_m\to X$ be a blowing up such that $X_m$ is generically smooth and normal and $\mu_m^{-1}\mathfrak{b}^0(m\overline{D})\cdot\mathcal{O}_{X_m}$ is Cartier.
Let $B(m\overline{D})$ be an effective Cartier divisor such that $\mathcal{O}_{X_m}(-B(m\overline{D}))=\mu_m^{-1}\mathfrak{b}^0(m\overline{D})\cdot\mathcal{O}_{X_m}$, and let $1_{B_m}$ be the canonical section.
Set $A(m\overline{D}):=m\mu_m^*D-B(m\overline{D})$.
Since the homomorphism
\[
 \mathrm{F}^0(X_m,m\mu_m^*\overline{D})\otimes_{\ZZ}\mathcal{O}_{X_m}(-m\mu_m^*D)\to\mathcal{O}_{X_m}(-B(m\overline{D}))
\]
is surjective, the homomorphism
\[
 \mathrm{F}^0(X_m,m\mu_m^*\overline{D})\otimes_{\ZZ}\mathcal{O}_{X_m}\to\mathcal{O}_{X_m}(A(m\overline{D}))
\]
is also surjective and we have an injective homomorphism $\mathrm{F}^0(X_m,m\mu_m^*\overline{D})\otimes_{\ZZ}\CC\to\Hz(X_m,A(m\overline{D}))\otimes_{\ZZ}\CC$ sending an $s\in\mathrm{F}^0(X_m,m\mu_m^*\overline{D})\otimes_{\ZZ}\CC$ to a section $\sigma\in\Hz(X_m,A(m\overline{D}))\otimes_{\ZZ}\CC$ such that $s=\sigma\otimes 1_{B_m}$.
For simplicity of notation, we shall sometimes identify $s\in\mathrm{F}^0(X_m,m\mu_m^*\overline{D})\otimes_{\ZZ}\CC$ with $\sigma\in\Hz(X_m,A(m\overline{D}))\otimes_{\ZZ}\CC$ if no confusion can arise.

\begin{lemma}\label{lem:metric}
Let $P(m)$ be a non-zero positive function such that $P(m)>0$ for all $m\geq 1$.
\begin{enumerate}
\item[\textup{(1)}] We can endow $\mathcal{O}_{X_m}(B(m\overline{D}))$ with a Hermitian metric defined by
\[
 |1_{B_m}|_{\overline{B}^P(m\overline{D})}(x):=\frac{\sqrt{\bm{\mathsf{B}}^0(m\overline{D})(\mu_m(x))}}{P(m)}
\]
for $x\in X_m(\CC)$.
Set $\overline{A}^P(m\overline{D}):=m\mu_m^*\overline{D}-\overline{B}^P(m\overline{D})$.
Then $\overline{A}^P(m\overline{D})\in\aDiv(X;C^{\infty})$ and the curvature form $\omega(\overline{A}^P(m\overline{D}))$ is semipositive.
\item[\textup{(2)}] Let $C$ be as in Lemma~\ref{lem:Moriwakidist}.
For any $\gamma,\gamma'\geq 0$ with
\[
 \exp(-m\gamma')\cdot C(m+1)^{3d}\leq P(m)^2\leq\exp(m\gamma),
\]
we have
\[
 (\mu_m;X_m\to X;\overline{A}^P(m\overline{D})/m+(0,\gamma))\in\widehat{\Theta}_{C^{\infty}}(\overline{D}+(0,\gamma+\gamma')).
\]
\end{enumerate}
\end{lemma}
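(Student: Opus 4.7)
The proof reduces to a direct computation exploiting that the sections $\sigma_0,\dots,\sigma_{r_m}$ of $\mathcal{O}_{X_m}(A(m\overline{D}))$ induced from the $L^2$-orthonormal basis $(e_i)$ are base-point free on $X_m$, which is precisely the content of the identity $\mu_m^{-1}\mathfrak{b}^0(m\overline{D})\cdot\mathcal{O}_{X_m}=\mathcal{O}_{X_m}(-B(m\overline{D}))$. From the relation $\mu_m^*e_i=\sigma_i\otimes 1_{B_m}$ and the intended decomposition $m\mu_m^*\overline{D}=\overline{A}^P(m\overline{D})+\overline{B}^P(m\overline{D})$, one reads off the pointwise identity
\[
 |\sigma_i|_{\overline{A}^P(m\overline{D})}^2\cdot|1_{B_m}|_{\overline{B}^P(m\overline{D})}^2=|e_i|_{m\overline{D}}^2\circ\mu_m,
\]
which combined with the defining formula for $|1_{B_m}|_{\overline{B}^P(m\overline{D})}^2$ yields $|\sigma_i|_{\overline{A}^P(m\overline{D})}^2=P(m)^2\,|e_i|_{m\overline{D}}^2\circ\mu_m/\bm{\mathsf{B}}^0(m\overline{D})\circ\mu_m$ and hence the Fubini-Study-type identity $\sum_i|\sigma_i|_{\overline{A}^P(m\overline{D})}^2=P(m)^2$ on $X_m(\CC)$.

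For (1), the base-point freeness guarantees that near any $x_0\in X_m(\CC)$ some $\sigma_{i_0}$ does not vanish, and in the local frame $\sigma_{i_0}$ the weight of the induced metric on $\mathcal{O}_{X_m}(A(m\overline{D}))$ is
\[
 -\log|\sigma_{i_0}|_{\overline{A}^P(m\overline{D})}^2=\log\Bigl(\sum_i|\sigma_i/\sigma_{i_0}|^2\Bigr)-2\log P(m).
\]
The ratios $\sigma_i/\sigma_{i_0}$ are holomorphic and include the constant $1$, so the sum on the right is $C^\infty$ and bounded below by $1$; its logarithm is therefore $C^\infty$ and plurisubharmonic. This gives both the $C^\infty$-type assertion for $\overline{A}^P(m\overline{D})$ and the semipositivity of its curvature form.

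For (2), I verify the defining conditions of $\widehat{\Theta}_{C^\infty}(\overline{D}+(0,\gamma+\gamma'))$. The difference
\[
 \mu_m^*(\overline{D}+(0,\gamma+\gamma'))-(\overline{A}^P(m\overline{D})/m+(0,\gamma))=\overline{B}^P(m\overline{D})/m+(0,\gamma')
\]
is an effective arithmetic $\QQ$-divisor of $C^0$-type: $B(m\overline{D})/m$ is effective by construction, and its Green function $(-\log\bm{\mathsf{B}}^0(m\overline{D})\circ\mu_m+2\log P(m))/m+\gamma'$ is nonnegative by the upper bound $\bm{\mathsf{B}}^0(m\overline{D})\leq C(m+1)^{3d}$ of Lemma~\ref{lem:Moriwakidist}(i) combined with the hypothesis $P(m)^2\geq C(m+1)^{3d}e^{-m\gamma'}$. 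For nefness of $\overline{A}^P(m\overline{D})/m+(0,\gamma)$: relative nefness of $A(m\overline{D})$ follows from its being globally generated by the $\sigma_i$'s, plurisubharmonicity of the Green function is part (1), and for height nonnegativity the Fubini-Study identity yields $\|\sigma_i\|_{\sup}^{\overline{A}^P(m\overline{D})}\leq P(m)$ for every $i$; choosing $\sigma_{i_0}$ non-vanishing at $x\in X_m(\overline{\QQ})$ and applying the standard height formula using $\sigma_{i_0}$ on the arithmetic curve $C_x$ then gives $h_{\overline{A}^P(m\overline{D})}(x)\geq -\log P(m)$, whence $h_{\overline{A}^P(m\overline{D})/m+(0,\gamma)}(x)\geq -(\log P(m))/m+\gamma/2\geq 0$ by the hypothesis $P(m)^2\leq e^{m\gamma}$.

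The main technical obstacle is bookkeeping: one must identify the abstract section $\sigma_i\in\Hz(X_m,A(m\overline{D}))$ with the rational function $\mu_m^*e_i$ under the paper's convention $\Hz(X,D)=\{\phi\in\Rat(X)^{\times}:D+(\phi)\geq 0\}\cup\{0\}$, and check in local trivializations that the canonical section $1_{B_m}$ factors out correctly so that the Fubini-Study identity comes out clean. Once this identification is in hand, both halves of the lemma reduce to matching the two hypotheses on $P(m)^2$ against the inequalities produced by Lemma~\ref{lem:Moriwakidist}(i) and the sup-norm estimate on the $\sigma_i$.
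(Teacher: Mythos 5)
Your strategy matches the paper's: a Fubini--Study computation for part (1), and verification of effectiveness plus nefness for part (2) using the decomposition $m\mu_m^*\overline{D}=\overline{A}^P+\overline{B}^P$ together with Lemma~\ref{lem:Moriwakidist}. Part (1) and the effectiveness half of part (2) are correct and essentially identical to the paper. The Fubini--Study identity $\sum_i|\sigma_i|^2_{\overline{A}^P}=P(m)^2$ is a clean reformulation of the Cauchy--Schwarz estimate the paper uses.

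There is, however, a gap in your height-nonnegativity argument. The sections $\sigma_i$ (equivalently the $e_i$) form an $L^2$-orthonormal basis of $\mathrm{F}^0(X,m\overline{D})\otimes_\ZZ\CC$ with respect to the analytic volume form $\Phi_X$; this is a real-analytic construction, and there is no reason for any $\sigma_{i_0}$ to lie in $\mathrm{F}^0(X,m\overline{D})\otimes_\ZZ\overline{\QQ}$, let alone in $\mathrm{F}^0(X_m,\,\cdot\,)$ itself. Consequently the phrase ``applying the standard height formula using $\sigma_{i_0}$ on the arithmetic curve $C_x$'' is not literally meaningful: the height formula on $C_x=\Spec\mathcal{O}_{K(x)}$ needs a section (a rational function in the paper's convention) defined over $K(x)$. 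The repair is short and is precisely what the paper does: since the $\sigma_i$ span $\mu_m^*\mathrm{F}^0(X,m\overline{D})\otimes\CC$, base-point freeness over $\CC$ forces some \emph{integral} $s\in\mathrm{F}^0(X,m\overline{D})$ to have $\mu_m^*s(x)\neq0$; your Fubini--Study identity together with Cauchy--Schwarz and $\|s\|_{L^2}\leq\|s\|_{\sup}\leq1$ then gives $\|\mu_m^*s\|_{\sup}^{\overline{A}^P(m\overline{D})}\leq P(m)$, and the height computation goes through with $\mu_m^*s$ in place of $\sigma_{i_0}$. The same caveat applies verbatim to the global-generation claim for relative nefness. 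For the record, the paper packages the nefness check as ``$\mathrm{F}^0(X_m,\overline{A}^P(m\overline{D})+(0,m\gamma))\otimes_\ZZ\mathcal{O}_{X_m}\to\mathcal{O}_{X_m}(A(m\overline{D}))$ is surjective'' and leaves the implication to nefness implicit; you unpack that implication, which is fine, but you must feed it integral sections.
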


\begin{proof}
(1): This follows from the same arguments as in Lemma~\ref{lem:filtered} (1).
In fact, if we choose an open covering $\{U_{\alpha}\}$ of $X_m(\CC)$ such that $\mu_m^*\mathcal{O}_{X}(mD)_{\CC}|_{U_{\alpha}}$ is trivial with local frame $\eta_{\alpha}$ and $B(m\overline{D})_{\CC}\cap U_{\alpha}$ is defined by a local equation $g_{\alpha}$, then we can write
\[
 \mu_m^*e_{i}=f_{\alpha,i}\cdot g_{\alpha}\cdot\eta_{\alpha}, \quad i=0,1,\dots,r_m
\]
on $U_{\alpha}$, where $f_{\alpha,0},\dots,f_{\alpha,r_m}$ are holomorphic functions on $U_{\alpha}$ satisfying $\{x\in U_{\alpha}\,|\,f_{\alpha,0}(x)=\dots=f_{\alpha,r_m}(x)=0\}=\emptyset$.
Since
\[
 \sqrt{\bm{\mathsf{B}}^0(m\overline{D})(\mu_m(x))}=|\eta_{\alpha}|_{m\mu_m^*\overline{D}}(x)\sqrt{|f_{\alpha,0}(x)|^2+\dots+|f_{\alpha,r_m}(x)|^2}\cdot |g_{\alpha}(x)|
\]
for $x\in U_{\alpha}$, we have the first half of (1).

For each point $x_0\in X_m(\CC)$, we find indices $\alpha,\iota$ with $x_0\in U_{\alpha}$ and $f_{\alpha,\iota}(x_0)\neq 0$.
Then
\[
 |\mu_m^*e_{\iota}|_{\overline{A}^P(m\overline{D})}^2(x)=\frac{|f_{\alpha,\iota}(x)|^2}{|f_{\alpha,0}(x)|^2+\dots+|f_{\alpha,r_m}(x)|^2}\cdot P(m)^2
\]
is a $C^{\infty}$-function on $U_{\alpha}$.
By reindexing, we may assume $\iota=0$.
Let $h_{\alpha,i}:=f_{\alpha,i}/f_{\alpha,0}$ near $x_0\in U_{\alpha}$.
Then
\begin{align*}
 \omega(\overline{A})&=\frac{\sqrt{-1}}{2\pi}\partial\overline{\partial}\log\left(1+|h_{\alpha,1}|^2+\dots+|h_{\alpha,r_m}|^2\right)\\
                            &=\frac{\sqrt{-1}}{2\pi}\left(\frac{1}{1+\sum_{i=1}^{r_m}|h_{\alpha,i}|^2}\sum_{j=1}^{r_m}dh_{\alpha,j}\wedge d\overline{h}_{\alpha,j}\right.\\
                            &\left.-\frac{1}{\left(1+\sum_{i=1}^{r_m}|h_{\alpha,i}|^2\right)^2}\left(\sum_{k=1}^{r_m}\overline{h}_{\alpha,k}dh_{\alpha,k}\right)\wedge\left(\sum_{l=1}^{r_m}h_{\alpha,l}d\overline{h}_{\alpha,l}\right)\right),
\end{align*}
is semipositive point-wise near $x_0\in U_{\alpha}$ since the Hermitian matrix
\begin{align*}
 &\frac{1}{1+\sum_{i=1}^{r_m}|h_{\alpha,i}|^2}\left(\begin{array}{ccc}1 & & \Largesymbol{O}\\ & \ddots & \\ \Largesymbol{O} & & 1\end{array}\right)\\
 &\qquad\qquad -\frac{1}{\left(1+\sum_{i=1}^{r_m}|h_{\alpha,i}|^2\right)^2}\left(\begin{array}{ccc}\overline{h}_{\alpha,1}h_{\alpha,1} & \cdots & \overline{h}_{\alpha,1}h_{\alpha,r_m}\\ \vdots & \ddots & \vdots\\ \overline{h}_{\alpha,r_m}h_{\alpha,1} & \cdots & \overline{h}_{\alpha,r_m}h_{\alpha,r_m}\end{array}\right)
\end{align*}
is positive-definite with eigenvalues $1/(1+\sum_i|h_{\alpha,i}|^2)^2,1/(1+\sum_i|h_{\alpha,i}|^2),\dots,1/(1+\sum_i|h_{\alpha,i}|^2)$.

(2): We have a decomposition
\[
 m\mu_m^*\overline{D}+(0,m(\gamma+\gamma'))=(\overline{A}^P(m\overline{D})+(0,m\gamma))+(\overline{B}^P(m\overline{D})+(0,m\gamma')).
\]
Since
\[
 |1_{B_m}|_{\overline{B}^P(m\overline{D})}(x):=\frac{\sqrt{\bm{\mathsf{B}}^0(m\overline{D})(\mu_m(x))}}{P(m)}\leq\exp(m\gamma'/2),
\]
$\overline{B}^P(m\overline{D})+(0,m\gamma')$ is effective.
Thus, it suffices to show that the homomorphism
\[
 \mathrm{F}^0(X_m,\overline{A}^P(m\overline{D})+(0,m\gamma))\otimes_{\ZZ}\mathcal{O}_{X_m}\to \mathcal{O}_{X_m}(A(m\overline{D}))
\]
is surjective.
Given $s\in\mathrm{F}^0(X,m\overline{D})$, we write $s=x_0e_0+\dots +x_{r_m}e_{r_m}$, $x_0,\dots,x_{r_m}\in\CC$.
Since by the Cauchy-Schwarz inequality
\begin{align*}
 |s|_{m\overline{D}}(\mu_m(x))&\leq |x_0||e_0|_{m\overline{D}}(\mu_m(x))+\dots +|x_{r_m}||e_{r_m}|_{m\overline{D}}(\mu_m(x))\\
 &\leq \|s\|_{L^2,\Phi_X}^{m\overline{D}}\times\sqrt{\bm{\mathsf{B}}^0(m\overline{D})(\mu_m(x))}\\
 &\leq \|s\|_{\sup}^{m\overline{D}}\times \sqrt{\bm{\mathsf{B}}^0(m\overline{D})(\mu_m(x))}
\end{align*}
for $x\in X_m(\CC)$, we have $\|\mu_m^*s\|_{\sup}^{\overline{A}^P(m\overline{D})}\leq P(m)\|s\|_{\sup}^{m\overline{D}}\leq\exp(m\gamma/2)\|s\|_{\sup}^{m\overline{D}}$.
Since $\mathrm{F}^0(X_m,m\mu_m^*\overline{D})\otimes_{\ZZ}\mathcal{O}_{X_m}\to \mathcal{O}_{X_m}(A(m\overline{D}))$ is surjective and
\[
\xymatrix{\mathrm{F}^0(X_m,\overline{A}^P(m\overline{D})+(0,m\gamma))\otimes_{\ZZ}\mathcal{O}_{X_m} \ar[r] & \mathcal{O}_{X_m}(A(m\overline{D})) \\
 \mathrm{F}^0(X_m,m\mu_m^*\overline{D})\otimes_{\ZZ}\mathcal{O}_{X_m} \ar[u] \ar[r] & \mathcal{O}_{X_m}(A(m\overline{D})) \ar@{=}[u]
}
\]
is commutative, we conclude the proof.
\end{proof}

Let $\overline{D}\in\aBigCone_{\QQ}(X;C^0)$, $m\geq 1$ an integer such that $m\overline{D}\in\aBigCone(X;C^0)$, and $P(m)$ a non-zero positive function such that $P(m)>0$ for all $m\geq 1$ and, given any $\delta>0$, we have
\[
 \exp(-m\delta)\leq P(m)\leq\exp(m\delta)
\]
for all $m\gg 1$.

\begin{proposition}\label{prop:aposint}
Suppose that $X$ is generically smooth and that $\overline{D}\in\aBigCone_{\QQ}(X;C^0)$.
Let $\overline{D}_k,\dots,\overline{D}_n\in\aBigCone_{\RR}(X;C^0)$ and $\overline{D}_{n+1},\dots,\overline{D}_d\in\aDiv_{\RR}^{\rm Nef}(X;C^0)$.
Then the arithmetic positive intersection number of $(\overbrace{\overline{D},\dots,\overline{D}}^{k},\overline{D}_k,\dots,\overline{D}_n;\overline{D}_{n+1},\dots,\overline{D}_d)$ can be represented as a limit:
\[
 \langle\overline{D}^{\cdot k}\cdot\overline{D}_k\cdots\overline{D}_n\rangle\overline{D}_{n+1}\cdots\overline{D}_d=\lim_{m\to\infty}\frac{\langle\mu_m^*\overline{D}_k\cdots\mu_m^*\overline{D}_n\rangle\overline{A}^P(m\overline{D})^{\cdot k}\cdot\mu_m^*\overline{D}_{n+1}\cdots\mu_m^*\overline{D}_d}{m^k},
\]
where the limit is taken over all $m\geq 1$ with $m\overline{D}\in\aBigCone(X;C^0)$.
\end{proposition}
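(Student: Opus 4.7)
The plan is to prove both $W \geq \limsup_m V_m$ and $W \leq \liminf_m V_m$, where $W$ denotes the LHS and $V_m$ the $m$-th quotient on the RHS; together these force $\lim V_m = W$.

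\textbf{Upper inequality $W \geq \limsup V_m$.} Fix $\gamma, \gamma' > 0$. By Lemma~\ref{lem:metric} (2), for all $m$ sufficiently large one has $(\mu_m; \overline{A}^P(m\overline{D})/m + (0, \gamma)) \in \widehat{\Theta}_{C^{\infty}}(\overline{D}+(0,\gamma+\gamma'))$. Choose admissible approximations $\overline{\mathcal{R}}_i = (\mu_m; \overline{N}_i)$ of $\mu_m^*\overline{D}_i$ for $k \leq i \leq n$ on a common blowup by Proposition~\ref{prop:defaposint2}. The definition of the positive intersection as a supremum gives
\begin{multline*}
\langle (\overline{D}+(0,\gamma+\gamma'))^{\cdot k}\cdot\overline{D}_k\cdots\overline{D}_n\rangle\overline{D}_{n+1}\cdots\overline{D}_d\\
\geq \adeg\!\left(\left(\tfrac{1}{m}\overline{A}^P(m\overline{D})+(0,\gamma)\right)^{\cdot k}\cdot\textstyle\prod_{i=k}^{n}\overline{N}_i\cdot\prod_{j>n}\mu_m^*\overline{D}_j\right).
\end{multline*}
Since $(0, \gamma)$ is itself nef, the monotonicity Lemma~\ref{lem:aint2} (3) bounds the right-hand side below by $\tfrac{1}{m^k}\adeg(\overline{A}^P(m\overline{D})^{\cdot k}\cdot\prod\overline{N}_i\cdot\prod\mu_m^*\overline{D}_j)$. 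Taking sup over the $\overline{\mathcal{R}}_i$ recovers $V_m$, and sending $\gamma, \gamma' \to 0$ via the continuity Proposition~\ref{prop:aposcont} (2) yields $W \geq V_m$ for every large $m$.

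\textbf{Lower inequality $W \leq \liminf V_m$.} Given $\varepsilon > 0$, use Propositions~\ref{prop:replacewithample} and~\ref{prop:defaposint2} to choose an ample admissible approximation $\overline{\mathcal{R}}_0 = (\varphi; \overline{M}_0)$ of $\overline{D}$ and admissible approximations $\overline{\mathcal{R}}_i = (\varphi; \overline{M}_i)$ of $\overline{D}_i$ ($k \leq i \leq n$) over a common blowup $\varphi: X' \to X$, satisfying $\overline{\mathcal{R}}_0^{\cdot k}\cdot\overline{\mathcal{R}}_k\cdots\overline{\mathcal{R}}_n\cdot\varphi^*\overline{D}_{n+1}\cdots\varphi^*\overline{D}_d \geq W - \varepsilon$. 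Set $\overline{F}_0 := \varphi^*\overline{D} - \overline{M}_0$, an effective arithmetic $\QQ$-divisor with canonical section $s_{F_0}$ of supremum norm $\leq 1$. The crucial geometric input: multiplication by $s_{F_0}^{\otimes m}$ embeds $\mathrm{F}^0(X', m\overline{M}_0)$ into $\varphi^*\mathrm{F}^0(X, m\overline{D})$. Since $\overline{M}_0$ is ample, $\mathfrak{b}^0(m\overline{M}_0) = \mathcal{O}_{X'}$ for $m$ large, and so on a common blowup $Y$ with $\pi: Y \to X'$ and $\pi': Y \to X_m$, the base-ideal inclusion $\varphi^{-1}\mathfrak{b}^0(m\overline{D})\cdot\mathcal{O}_{X'} \supseteq \mathcal{O}_{X'}(-mF_0)$ yields the Cartier comparison $\pi'^*B(m\overline{D}) \leq m\pi^*F_0$, equivalently $m\pi^*M_0 \leq \pi'^*A(m\overline{D})$. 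Promoting to arithmetic divisors using $\bm{\mathsf{B}}^0(m\overline{D}) \leq C(m+1)^{3d}$ from Lemma~\ref{lem:Moriwakidist} (i) and the hypothesis $P(m)^{\pm 1} \leq \exp(m\delta)$ gives
\[
m\pi^*\overline{M}_0 \leq \pi'^*\overline{A}^P(m\overline{D}) + (0, m\delta + O(\log m))
\]
as arithmetic divisors on $Y$. Expanding the $k$-fold intersection, applying the monotonicity Lemma~\ref{lem:aint2} (3) and the generic-degree bound Lemma~\ref{lem:aint2} (1), then taking supremum over the pulled-back $\overline{\mathcal{R}}_i$, one deduces $\overline{\mathcal{R}}_0^{\cdot k}\cdot\overline{\mathcal{R}}_k\cdots\overline{\mathcal{R}}_n\cdot\varphi^*\overline{D}_{n+1}\cdots\varphi^*\overline{D}_d \leq V_m + O(\delta) + o(1)$ as $m \to \infty$. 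Letting $m \to \infty$, then $\delta, \varepsilon \to 0$, yields $W \leq \liminf V_m$.

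The main technical obstacle is the arithmetic divisorial comparison $m\pi^*\overline{M}_0 \leq \pi'^*\overline{A}^P(m\overline{D}) + (0, o(m))$ on $Y$: the Cartier part rests on the base-ideal comparison combined with the ampleness of $\overline{M}_0$, but the Green-function part must be controlled using the distortion estimate of Lemma~\ref{lem:Moriwakidist} (i), which is exactly what ensures that the metric perturbation grows sublinearly and thus vanishes upon division by $m^k$.
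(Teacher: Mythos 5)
The upper inequality is fine and follows the paper's outline. The lower inequality, however, has a genuine gap at the metric comparison. You claim that
\[
m\pi^*\overline{M}_0 \leq \pi'^*\overline{A}^P(m\overline{D}) + (0,\, m\delta + O(\log m))
\]
follows from the upper bound $\bm{\mathsf{B}}^0(m\overline{D}) \leq C(m+1)^{3d}$ of Lemma~\ref{lem:Moriwakidist}~(i). But unwinding the definitions shows this inequality is equivalent (after establishing the Cartier part, which you do correctly) to the bound
\[
 \frac{|s_{F_0}^{\otimes m}|_{m\overline{F}_0}(\pi(y)) \cdot P(m)}{\sqrt{\bm{\mathsf{B}}^0(m\overline{D})(\pi(y))}} \leq \exp\bigl(m\delta + O(\log m)\bigr),
\]
which demands a \emph{lower} bound on the distortion function $\bm{\mathsf{B}}^0(m\overline{D})$ in terms of $|s_{F_0}^{\otimes m}|$. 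Lemma~\ref{lem:Moriwakidist}~(i) is the upper bound and is of no help; indeed the quantity $\bm{\mathsf{B}}^0(m\overline{D})$ tends to zero near the base locus, so the required control is far from automatic. The paper obtains it by the Cauchy--Schwarz trick: writing the left side as $\|s_{F_0}^{\otimes m}\otimes l\|_{L^2} \cdot P(m) / |l|_{m\overline{M}_0}(\pi(y))$ for any $l \in \mathrm{F}^0(X',m\overline{M}_0)$, one still needs a section $l$ with $\|l\|_{\sup}^{m\overline{M}_0} \leq \exp(m\gamma/2)\,|l|_{m\overline{M}_0}(\pi(y))$ uniformly in $y$ for $m \gg 1$. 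This peak-section estimate is precisely Lemma~\ref{lem:ampleext}, which rests on the Tian--Bouche theorem about Fubini--Study approximations of positive Hermitian metrics. Your proposal omits this ingredient entirely and attributes the control to the wrong estimate, so the lower bound does not go through as written.

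A secondary issue: even granting the metric comparison, your lower bound is only obtained for $m$ sufficiently divisible (so that $mF_0$ is integral, $\mathrm{F}^{0+}(X',m\overline{M}_0)$ generates $mM_0$, etc.). Combined with the upper bound this shows convergence along that subsequence, but to conclude convergence over \emph{all} $m$ with $m\overline{D} \in \aBigCone(X;C^0)$ one needs the \emph{a priori} existence of the limit. The paper secures this by showing $(I_m^{1/k})_m$ is superadditive, using the submultiplicativity of distortion functions from Lemma~\ref{lem:Moriwakidist}~(ii). Your argument should include this step (or an equivalent one) to obtain the full statement.
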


\begin{proof}
Let $C>0$ be as in Lemma~\ref{lem:Moriwakidist}.
We may concentrate on the case $P(m):=\sqrt{C(m+1)^{3d}}$ since the general case easily follows from this case.
We set $\overline{A}_m:=\overline{A}^P(m\overline{D})$ and $\overline{B}_m:=\overline{B}^P(m\overline{D})$ for simplicity.
By the multilinearity in the variables $\overline{D}_{n+1},\dots,\overline{D}_d$, we may assume without loss of generality that $\overline{D}_{n+1},\dots,\overline{D}_d$ are all nef and big.
Set $S:=\{m\geq 1\,|\,m\overline{D}\in\aBigCone(X;C^0)\}$, and
\[
 I_m:=\langle\mu_m^*\overline{D}_k\cdots\mu_m^*\overline{D}_n\rangle\cdot\overline{A}_m^{\cdot k}\cdot\mu_m^*\overline{D}_{n+1}\cdots\mu_m^*\overline{D}_d
\]
for $m\in S$.
Note that $S$ is naturally a sub-semigroup of $\NN$.
For $p,q\in S$, let $\mu_{p,q}:X_{p,q}\to X$ be a blowing up such that $X_{p,q}$ is generically smooth and normal and $\mu_{p,q}$ factors as $X_{p,q}\xrightarrow{\nu_m}X_m\xrightarrow{\mu_m}X$ for $m=p,q,p+q$.
Since $\nu_p^*1_{B_p}\otimes \nu_q^*1_{B_q}$ vanishes along $\mu_{p,q}^{-1}\Bs\mathrm{F}^0(X,(p+q)\overline{D})$, there exists a section $1_{p,q}\in\Hz(X_{p,q},\nu_p^*B_p+\nu_q^*B_q-\nu_{p+q}^*B_{p+q})$ such that $1_{p,q}\otimes \nu_{p+q}^*1_{B_{p+q}}=\nu_p^*1_{B_p}\otimes \nu_q^*1_{B_q}$ and that
\begin{multline*}
 \|1_{p,q}\|_{\sup}^{\nu_p^*\overline{B}_p+\nu_q^*\overline{B}_q-\nu_{p+q}^*\overline{B}_{p+q}}\\
 =\sup_{x\in X_{p,q}(\CC)}\sqrt{\frac{\bm{\mathsf{B}}^0(p\overline{D})(\mu_{p,q}(x))}{C(p+1)^{3d}}}\cdot\sqrt{\frac{\bm{\mathsf{B}}^0(q\overline{D})(\mu_{p,q}(x))}{C(q+1)^{3d}}}\cdot\sqrt{\frac{C(p+q+1)^{3d}}{\bm{\mathsf{B}}^0((p+q)\overline{D})(\mu_{p,q}(x))}}\leq 1
\end{multline*}
by Lemma~\ref{lem:Moriwakidist}.
Hence, we have $\nu_{p+q}^*\overline{A}_{p+q}\geq\nu_p^*\overline{A}_p+\nu_q^*\overline{A}_q$.
By Lemmas~\ref{lem:aint2} (3) and Proposition~\ref{prop:avollogconcave} (4), we have
\[
 I_{p+q}^{1/k}\geq I_p^{1/k}+I_q^{1/k}
\]
for all $p,q\in S$, which implies that the sequence $(I_m^{1/k}/m)_{m\in S}$ converges.

Let $\varepsilon>0$ be an arbitrarily small positive real number and fix a real number $\delta>0$ such that
\begin{equation}\label{eqn:deltachoice}
 |\langle(\overline{D}+(0,\delta))^{\cdot k}\cdot\overline{D}_{k}\cdots\overline{D}_n\rangle\overline{D}_{n+1}\cdots\overline{D}_d-\langle\overline{D}^{\cdot k}\cdot\overline{D}_k\cdots\overline{D}_n\rangle\overline{D}_{n+1}\cdots\overline{D}_d|\leq\varepsilon.
\end{equation}
Let $m_{\delta}\geq 1$ be a positive integer such that $\exp(-m\delta)\cdot C(m+1)^{3d}\leq 1$ for all $m\geq m_{\delta}$.
Then $(\mu_m:X_m\to X,\overline{A}_m/m+(0,\delta))\in\widehat{\Theta}_{C^{\infty}}(\overline{D}+(0,\delta))$ for all $m\geq m_{\delta}$ and we have
\begin{align*}
 &\langle\overline{D}^{\cdot k}\cdot\overline{D}_k\cdots\overline{D}_n\rangle\overline{D}_{n+1}\cdots\overline{D}_d+\varepsilon \\
 &\qquad\qquad\geq\langle(\overline{D}+(0,\delta))^{\cdot k}\cdot\overline{D}_k\cdots\overline{D}_n\rangle\overline{D}_{n+1}\cdots\overline{D}_d\\
 &\qquad\qquad\geq\langle\mu_m^*\overline{D}_k\cdots\mu_m^*\overline{D}_n\rangle(\overline{A}_m/m+(0,\delta))^{\cdot k}\cdot\mu_m^*\overline{D}_{n+1}\cdots\mu_m^*\overline{D}_d \\
 &\qquad\qquad\geq I_m/m
\end{align*}
for all $m\geq m_{\delta}$.
Hence $\langle\overline{D}^{\cdot k}\cdot\overline{D}_k\cdots\overline{D}_n\rangle\overline{D}_{n+1}\cdots\overline{D}_d\geq\lim_{m\to\infty}I_m/m$.

By Proposition~\ref{prop:replacewithample}, we can fix admissible approximations $\overline{\mathcal{R}}:=(\varphi:X'\to X;\overline{M})\in\widehat{\Theta}_{\rm amp}(\overline{D})$ and $\overline{\mathcal{R}}_i:=(\varphi:X'\to X;\overline{M}_i)\in\widehat{\Theta}_{\rm ad}(\overline{D}_i)$ for $i=k,\dots,n$ such that
\begin{equation}\label{eqn:posintchoice}
 \overline{\mathcal{R}}^{\cdot k}\cdot\overline{\mathcal{R}}_k\cdots\overline{\mathcal{R}}_n\cdot\overline{D}_{n+1}\cdots\overline{D}_d\geq \langle\overline{D}^{\cdot k}\cdot\overline{D}_k\cdots\overline{D}_n\rangle\overline{D}_{n+1}\cdots\overline{D}_d-\varepsilon>\varepsilon.
\end{equation}
Note that, since $\overline{D}\in\aBigCone_{\QQ}(X;C^0)$ and $\overline{F}:=\varphi^*\overline{D}-\overline{M}\in\aDiv_{\QQ}(X;C^0)$, $\overline{M}$ is automatically an ample arithmetic $\QQ$-divisor.
Let $\gamma>0$ be a sufficiently small real number such that $\overline{M}-(0,\gamma)$ is still ample and
\begin{equation}\label{eqn:gammachoice}
 \overline{\mathcal{R}}_k\cdots\overline{\mathcal{R}}_n\cdot(\overline{M}-(0,\gamma))^{\cdot k}\cdot\varphi^*\overline{D}_{n+1}\cdots\varphi^*\overline{D}_d\geq\overline{\mathcal{R}}^{\cdot k}\cdot\overline{\mathcal{R}}_k\cdots\overline{\mathcal{R}}_n\cdot\overline{D}_{n+1}\cdots\overline{D}_d-\varepsilon.
\end{equation}
Fix a sufficiently divisible positive integer $m\in S$ having the properties that
\begin{itemize}
\item $m\overline{D}\in\aBigCone(X;C^0)$,
\item $\mathcal{O}_{X'}(mM)$ is a very ample line bundle,
\item $\mathrm{F}^{0+}(X',m\overline{M})=\Hz(X',mM)$,
\item for any $x\in X'(\CC)$, there exists a non-zero section $l\in\Hz(X',mM)\otimes_{\ZZ}\CC$ such that $\|l\|_{\sup}^{m\overline{M}}\leq\exp(m\gamma/2)|l|_{m\overline{M}}(x)$ (Lemma~\ref{lem:ampleext}),
\item $\mathcal{O}_{X'}(m\overline{F})$ is an effective continuous Hermitian line bundle, and
\item $C(m+1)^{3d}\leq\exp(m\gamma)$.
\end{itemize}
Fix a non-zero section $s\in\Hz(X',mF)$ having supremum norm less than or equal to one.
Let $\pi:X''\to X$ be a blowing up such that $X''$ is generically smooth and normal and that $\pi$ factors as $X''\xrightarrow{\psi}X'\xrightarrow{\varphi}X$ and as $X''\xrightarrow{\nu_m}X_m\xrightarrow{\mu_m}X$.
Since $\mathrm{F}^{0+}(X',m\overline{M})\otimes_{\ZZ}\mathcal{O}_{X'}\to mM$ is surjective, $s$ vanishes along $\varphi^{-1}\Bs\mathrm{F}^0(X,m\overline{D})$ and there exists a section $\sigma\in\Hz(X'',m\psi^*F-\nu_m^*B_m)$ such that $\sigma\otimes \nu_m^*1_{B_m}=\psi^*s$.

\begin{claim}\label{clm:normofsigma}
\[
 \exp(-m\gamma)\|\sigma\|_{\sup}^{m\psi^*\overline{F}-\nu_m^*\overline{B}_m}\leq 1.
\]
In particular, $\nu_m^*\overline{A}_m\geq m\psi^*(\overline{M}-(0,\gamma))$.
\end{claim}

\begin{proof}
Given any closed point $x\in X''(\CC)$, we can choose a non-zero section $l\in\Hz(X',mM)\otimes_{\ZZ}\CC$ such that
\begin{equation}\label{eqn:normofl}
 \|l\|_{\sup}^{m\overline{M}}\leq \exp(m\gamma/2)|l|_{m\overline{M}}(\psi(x)).
\end{equation}
Then,
\begin{align}
 |\sigma|_{m\psi^*\overline{F}-\nu_m^*\overline{B}_m}(x) \label{eqn:normofsigma}&=|s|_{m\overline{F}}(\psi(x))\cdot\sqrt{\frac{C(m+1)^{3d}}{\bm{\mathsf{B}}^0(m\overline{D})(\pi(x))}}\\
 &=\frac{|s\otimes l|_{m\varphi^*\overline{D}}(\psi(x))}{\sqrt{\bm{\mathsf{B}}^0(m\overline{D})(\pi(x))}}\times\frac{\sqrt{C(m+1)^{3d}}}{|l|_{m\overline{M}}(\psi(x))}.\nonumber
\end{align}
Since $\Hz(X',mM)=\mathrm{F}^0(X',m\overline{M})$, we can regard $s\otimes l\in\mathrm{F}^0(X,m\overline{D})\otimes_{\ZZ}\CC$.
Thus, by the Cauchy-Schwarz inequality, we have
\begin{align}
 |s\otimes l|_{m\varphi^*\overline{D}}(\psi(x))&=|s\otimes l|_{m\overline{D}}(\pi(x))\label{eqn:CS}\\
 &\leq\|l\|_{\sup}^{m\overline{M}}\times\sqrt{\bm{\mathsf{B}}^0(m\overline{D})(\pi(x))}. \nonumber
\end{align}
By combining (\ref{eqn:normofl}), (\ref{eqn:normofsigma}), and (\ref{eqn:CS}), we have
\[
 |\sigma|_{m\psi^*\overline{F}-\nu_m^*\overline{B}_m}(x)\leq\frac{\|l\|_{\sup}^{m\overline{M}}}{|l|_{m\overline{M}}(\psi(x))}\times \sqrt{C(m+1)^{3d}}\leq\exp(m\gamma).
\]
for every $x\in X''(\CC)$.
\end{proof}

By (\ref{eqn:posintchoice}), (\ref{eqn:gammachoice}), Claim~\ref{clm:normofsigma}, and Lemma~\ref{lem:aint2} (3), we have
\[
 I_m/m^k\geq\langle\overline{D}^{\cdot k}\cdot\overline{D}_k\cdots\overline{D}_n\rangle\overline{D}_{n+1}\cdots\overline{D}_d-2\varepsilon
\]
for all sufficiently divisible $m\gg 1$.
\end{proof}

%%%
\section{Differentiability of the arithmetic volumes}\label{sec:PMT}

Let $X$ be a normal projective arithmetic variety, and let $\overline{D}$ and $\overline{E}$ be two arithmetic $\RR$-divisors on $X$.
In this section, we show that the function $\RR\ni t\mapsto\avol(\overline{D}+t\overline{E})\in\RR$ is differentiable provided that $\overline{D}$ is big.
By the arithmetic Siu inequality \cite[Theorem~1.2]{Yuan07} and the continuity of the arithmetic volume function, we have
\begin{equation}\label{eqn:aSiu}
 \avol(\overline{D}-\overline{E})\geq\adeg(\overline{D}^{\cdot (d+1)})-(d+1)\adeg(\overline{D}^{\cdot d}\cdot\overline{E})
\end{equation}
if both $\overline{D}$ and $\overline{E}$ are nef.

\begin{proposition}\label{prop:aSiu}
Let $\overline{D}$ and $\overline{E}$ be two arithmetic $\RR$-divisors on $X$ and suppose that $\overline{D}$ is nef.
\begin{enumerate}
\item[\textup{(1)}] Suppose that there exists a nef and big arithmetic $\RR$-divisor $\overline{A}$ such that $\overline{A}\pm\overline{E}$ is nef and $\overline{A}-\overline{D}$ is pseudo-effective.
Set $C_1(|t|):=2d(d+1)(1+|t|)^{d-1}$.
Then
\[
 \avol(\overline{D}+t\overline{E})-\avol(\overline{D})\geq (d+1)\adeg(\overline{D}^{\cdot d}\cdot\overline{E})\cdot t-C_1(|t|)\avol(\overline{A})\cdot t^2
\]
for all $t\in\RR$.
\item[\textup{(2)}] Suppose that $\overline{E}$ is pseudo-effective and that there exists a nef and big arithmetic $\RR$-divisor $\overline{A}$ such that $\overline{A}+(\overline{D}+\overline{E})$ is nef and $\overline{A}-(\overline{D}+\overline{E})$ is pseudo-effective.
Set $C_2(t):=4d(d+1)(1+2t)^{d-1}$.
Then
\[
 \avol(\overline{D}+t\overline{E})-\avol(\overline{D})\geq (d+1)\adeg(\overline{D}^{\cdot d}\cdot\overline{E})\cdot t-C_2(t)\avol(\overline{A})\cdot t^2
\]
for all $t\in\RR_{\geq 0}$.
\end{enumerate}
\end{proposition}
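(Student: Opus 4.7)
The plan is to apply the arithmetic Siu inequality (\ref{eqn:aSiu}) to a decomposition of $\overline{D}+t\overline{E}$ as a difference of two nef arithmetic $\RR$-divisors, expand both products by the binomial theorem, and bound the higher-order error terms using monotonicity of $\adeg$ on the nef cone (Lemma~\ref{lem:aint2}(3)). Throughout, I will freely use that $\avol(\overline{P})=\adeg(\overline{P}^{\cdot(d+1)})$ for any nef $\overline{P}$: on the big-nef cone this follows from Proposition~\ref{prop:avollogconcave}(1) together with Remark~\ref{rem:aposinthomo}(3), and off it both sides vanish (the inequality $\avol(\overline{P})\geq\adeg(\overline{P}^{\cdot(d+1)})$ from (\ref{eqn:aSiu}) with $\overline{E}=0$, combined with Lemma~\ref{lem:aint2}(2), forces $\adeg(\overline{P}^{\cdot(d+1)})=0$ when $\overline{P}$ is nef but not big).

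For (1), I will use the decomposition
\[
 \overline{D}+t\overline{E}=(\overline{D}+|t|\overline{A})-|t|(\overline{A}-\sgn(t)\overline{E}),
\]
whose two components are nef by the hypotheses on $\overline{A}$ and $\overline{A}\pm\overline{E}$. For (2), with $t\geq 0$, I will set $\overline{B}:=\overline{A}+\overline{D}+\overline{E}$ and $\overline{C}:=\overline{A}+\overline{D}$ (both nef) and use
\[
 \overline{D}+t\overline{E}=(\overline{D}+t\overline{B})-t\overline{C}.
\]
In either case, after plugging the decomposition into (\ref{eqn:aSiu}) and expanding $(\overline{D}+|t|\overline{B})^{\cdot(d+1)}$ and $(\overline{D}+|t|\overline{B})^{\cdot d}\cdot\overline{C}$ by the binomial theorem (in (1) one takes $\overline{B}:=\overline{A}$ and $\overline{C}:=\overline{A}-\sgn(t)\overline{E}$), I obtain a lower bound for $\avol(\overline{D}+t\overline{E})$ whose constant term is $\adeg(\overline{D}^{\cdot(d+1)})=\avol(\overline{D})$. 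The degree-one term will collapse to $(d+1)t\adeg(\overline{D}^{\cdot d}\overline{E})$ thanks to the identity $j\binom{d+1}{j}=(d+1)\binom{d}{j-1}$ at $j=1$ together with the cancellation $\overline{B}-\overline{C}=\sgn(t)\overline{E}$ in (1) (respectively $\overline{E}$ in (2)). The remaining terms of total degree $\geq 2$ in $|t|$ assemble into
\[
 -R(t)=-\sum_{j=2}^{d+1}\binom{d+1}{j}|t|^j\Bigl[j\adeg(\overline{D}^{\cdot(d+1-j)}\overline{B}^{\cdot(j-1)}\overline{C})-\adeg(\overline{D}^{\cdot(d+1-j)}\overline{B}^{\cdot j})\Bigr].
\]

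The main obstacle will be showing $R(t)\leq C_i\avol(\overline{A})t^2$ with the specified constants. I plan first to discard the non-positive contribution to $R(t)$, reducing the task to controlling each $\adeg(\overline{D}^{\cdot(d+1-j)}\overline{B}^{\cdot(j-1)}\overline{C})$ by a multiple of $\avol(\overline{A})$. Next I verify the pseudo-effective comparisons $\overline{D}\leq\overline{A}$ and $\overline{B},\overline{C}\leq 2\overline{A}$ case by case: in (1), $\overline{B}=\overline{A}$ is trivial and $2\overline{A}-\overline{C}=\overline{A}+\sgn(t)\overline{E}$ is nef by hypothesis; in (2), $2\overline{A}-\overline{B}=\overline{A}-(\overline{D}+\overline{E})$ is pseudo-effective by hypothesis, and $2\overline{A}-\overline{C}=\overline{A}-\overline{D}=(\overline{A}-(\overline{D}+\overline{E}))+\overline{E}$ is pseudo-effective as a sum of pseudo-effectives. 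By Lemma~\ref{lem:aint2}(3), each intersection number is then bounded by $2\avol(\overline{A})$ in (1) (only the factor $\overline{C}$ costs a factor of $2$) and by $2^j\avol(\overline{A})$ in (2). Finally, the elementary identity
\[
 \sum_{j=2}^{d+1}j\binom{d+1}{j}x^j=(d+1)x\bigl((1+x)^d-1\bigr)\leq d(d+1)x^2(1+x)^{d-1}
\]
(the last step via $(1+x)^d-1\leq dx(1+x)^{d-1}$), applied with $x=|t|$ in (1) and $x=2t$ in (2), produces precisely $C_1(|t|)=2d(d+1)(1+|t|)^{d-1}$ and $C_2(t)=4d(d+1)(1+2t)^{d-1}$.
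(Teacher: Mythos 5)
Your proof is correct and follows essentially the same route as the paper: the same decompositions (writing $\overline{D}+t\overline{E}=(\overline{D}+|t|\overline{A})-|t|(\overline{A}-\sgn(t)\overline{E})$ in (1) and $=(\overline{D}+t\overline{B})-t(\overline{A}+\overline{D})$ with $\overline{B}=\overline{A}+\overline{D}+\overline{E}$ in (2)), the same application of the arithmetic Siu inequality (\ref{eqn:aSiu}), the same binomial expansion, and the same use of Lemma~\ref{lem:aint2}(3) with the comparisons $\overline{D}\leq\overline{A}$, $\overline{B}\leq 2\overline{A}$, $\overline{C}\leq 2\overline{A}$ to bound the higher-order terms. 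The only difference is cosmetic: you expand both products fully and re-index via $j\binom{d+1}{j}=(d+1)\binom{d}{j-1}$ before bounding the remainder $R(t)$, whereas the paper keeps only the $j=0,1$ terms of $\adeg((\overline{D}+|t|\overline{A})^{\cdot(d+1)})$ and bounds the expansion of the second factor directly; after re-indexing, your estimate for $R(t)$ coincides term-by-term with the paper's, and your closing inequality $\sum_{j=2}^{d+1}j\binom{d+1}{j}x^j=(d+1)x((1+x)^d-1)\leq d(d+1)x^2(1+x)^{d-1}$ is the same bound the paper expresses as $\sum_{k=1}^d\binom{d}{k}x^{k-1}\leq d(1+x)^{d-1}$.
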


\begin{remark}\label{rem:integrablenefbig}
If $\overline{E}$ is integrable, then we can write $\overline{E}=\overline{M}-\overline{N}$ with nef and big arithmetic $\RR$-divisors $\overline{M},\overline{N}$.
Set $\overline{A}:=\overline{D}+\overline{M}+\overline{N}$.
Then $\overline{A}\pm\overline{E}$ and $\overline{A}-\overline{D}$ are all nef and big, and the condition of Proposition~\ref{prop:aSiu} (1) is satisfied.
Similarly, if $\overline{D}+\overline{E}$ is integrable, then one can find an $\overline{A}$ satisfying the condition of Proposition~\ref{prop:aSiu} (2).
\end{remark}

\begin{proof}
(1): If $t=0$, then the assertion is trivial.
For $t\in\RR\setminus\{0\}$, we write $\sgn(t):=t/|t|$ and set $\overline{B}:=\overline{A}-\sgn(t)\overline{E}$.
Since $\overline{D}$, $\overline{A}$, and $\overline{B}$ are all nef, we have
\begin{align}
 \avol(\overline{D}+t\overline{E})& =\avol((\overline{D}+|t|\overline{A})-|t|\overline{B}) \label{eqn:aSiu1}\\
 &\geq\adeg((\overline{D}+|t|\overline{A})^{\cdot (d+1)})-(d+1)\adeg((\overline{D}+|t|\overline{A})^{\cdot d}\cdot |t|\overline{B}) \nonumber\\
 &\geq\adeg(\overline{D}^{\cdot (d+1)})+(d+1)\adeg(\overline{D}^{\cdot d}\cdot |t|\overline{A}) \nonumber\\
 &\qquad\qquad\qquad\qquad -(d+1)\adeg((\overline{D}+|t|\overline{A})^{\cdot d}\cdot |t|\overline{B}) \nonumber
\end{align}
by (\ref{eqn:aSiu}).
Moreover, since $\overline{A}-\overline{D}$ and $2\overline{A}-\overline{B}=\overline{A}+\sgn(t)\overline{E}$ are pseudo-effective, we have
\begin{align}
 \adeg((\overline{D}+|t|\overline{A})^{\cdot d}\cdot |t|\overline{B}) &=\sum_{k=0}^{d}\binom{d}{k}\adeg(\overline{D}^{\cdot (d-k)}\cdot\overline{A}^{\cdot k}\cdot\overline{B})\cdot |t|^{k+1} \label{eqn:aSiu2}\\
 & \leq\adeg(\overline{D}^{\cdot d}\cdot |t|\overline{B})+2\avol(\overline{A})\sum_{k=1}^{d}\binom{d}{k}|t|^{k+1}. \nonumber
\end{align}
By (\ref{eqn:aSiu1}), (\ref{eqn:aSiu2}), and $|t|(\overline{A}-\overline{B})=t\overline{E}$, we have
\[
 \avol(\overline{D}+t\overline{E})-\avol(\overline{D})\geq (d+1)\adeg(\overline{D}^{\cdot d}\cdot\overline{E})\cdot t-C(|t|)\avol(\overline{A})\cdot t^2,
\]
where
\[
 C(|t|):=2(d+1)\sum_{k=1}^{d}\binom{d}{k}|t|^{k-1}\leq 2d(d+1)(1+|t|)^{d-1}.
\]

(2): The proof is almost the same as the above.
Set $\overline{B}:=\overline{A}+\overline{D}+\overline{E}$.
Since $\overline{D}$, $\overline{A}$, and $\overline{B}$ are all nef, we have
\begin{align}
 \avol(\overline{D}+t\overline{E}) &=\avol((\overline{D}+t\overline{B})-t(\overline{A}+\overline{D})) \label{eqn:aSiu3}\\
 &\geq\adeg(\overline{D}^{\cdot (d+1)})+(d+1)\adeg(\overline{D}^{\cdot d}\cdot t\overline{B}) \nonumber\\
 &\qquad\qquad\qquad\qquad -(d+1)\adeg((\overline{D}+t\overline{B})^{\cdot d}\cdot t(\overline{A}+\overline{D})) \nonumber
\end{align}
by using (\ref{eqn:aSiu}).
Since $\overline{A}-\overline{D}$ and $2\overline{A}-\overline{B}=\overline{A}-\overline{D}-\overline{E}$ are pseudo-effective, we have
\begin{equation}\label{eqn:aSiu4}
 \adeg((\overline{D}+t\overline{B})^{\cdot d}\cdot t(\overline{A}+\overline{D}))\leq\adeg(\overline{D}^{\cdot d}\cdot t(\overline{A}+\overline{D}))+\avol(\overline{A})\sum_{k=1}^{d}\binom{d}{k}(2t)^{k+1}.
\end{equation}
Hence, by (\ref{eqn:aSiu3}), (\ref{eqn:aSiu4}), we have
\[
 \avol(\overline{D}+t\overline{E})-\avol(\overline{D})\geq (d+1)\adeg(\overline{D}^{\cdot d}\cdot\overline{E})\cdot t-C'(t)\avol(\overline{A})\cdot t^2,
\]
where
\[
 C'(t):=4(d+1)\sum_{k=1}^{d}\binom{d}{k}(2t)^{k-1}\leq 4d(d+1)(1+2t)^{d-1}.
\]
\end{proof}

\begin{theorem}\label{thm:diffavol}
For any $\overline{D}\in\aBigCone_{\RR}(X;C^0)$ and $\overline{E}\in\aDiv_{\RR}(X;C^0)$, the function
\[
 \RR\ni t\mapsto\avol(\overline{D}+t\overline{E})\in\RR
\]
is differentiable, and
\[
 \lim_{t\to 0}\frac{\avol(\overline{D}+t\overline{E})-\avol(\overline{D})}{t}=(d+1)\langle\overline{D}^{\cdot d}\rangle\overline{E}.
\]
\end{theorem}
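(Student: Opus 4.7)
The plan is to sandwich both one-sided derivatives of $\phi(t) := \avol(\overline{D}+t\overline{E})$ at $t=0$ onto the value $(d+1)\langle\overline{D}^{\cdot d}\rangle\overline{E}$, using Yuan's arithmetic Brunn--Minkowski inequality on one side and the arithmetic Siu inequality (Proposition~\ref{prop:aSiu}) on the other. Brunn--Minkowski gives that $\phi^{1/(d+1)}$ is concave on the open interval where $\overline{D}+t\overline{E}$ is big, so both one-sided derivatives of $\phi$ exist at $t=0$ and satisfy $\phi'_+(0) \leq \phi'_-(0)$. Combined with the linearity of the positive intersection functional $\overline{E} \mapsto \langle\overline{D}^{\cdot d}\rangle\overline{E}$ (Proposition~\ref{prop:aposcont} (3)), it is enough to prove the single-sided lower bound
\[
 \liminf_{t \to 0^+} \frac{\avol(\overline{D}+t\overline{E}) - \avol(\overline{D})}{t} \geq (d+1)\langle\overline{D}^{\cdot d}\rangle\overline{E};
\]
applying this to $-\overline{E}$ then yields the matching upper bound for $\phi'_-(0)$, and the sandwich closes.

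I would first prove this lower bound assuming $\overline{E}$ is integrable. The naive approach of applying Proposition~\ref{prop:aSiu} (1) to a fixed admissible approximation $\overline{\mathcal{R}} = (\varphi; \overline{M}) \in \widehat{\Theta}_{\rm ad}(\overline{D})$ fails because the resulting error term $(\avol(\overline{M}) - \avol(\overline{D}))/t$ blows up as $t \to 0^+$; the remedy is to let the approximation vary with $t$. Fix once and for all an $\overline{A}_0$ that is nef and big on $X$ with $\overline{A}_0 \pm \overline{E}$ nef and $\overline{A}_0 - \overline{D}$ pseudo-effective (possible since $\overline{E}$ is integrable, cf.\ Remark~\ref{rem:integrablenefbig}). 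For each small $t > 0$, I would use arithmetic Fujita approximation (Proposition~\ref{prop:avollogconcave} (1)) and the sup characterization of $\langle\overline{D}^{\cdot d}\rangle\overline{E}$ (Proposition~\ref{prop:replacewithample}), then intersect the two separately-chosen good approximations into one via the filtered structure of $\widehat{\Theta}_{\rm ad}(\overline{D})$ (Proposition~\ref{prop:defaposint2}) combined with the monotonicity of nef arithmetic intersection numbers (Lemma~\ref{lem:aint2} (3)). This produces a single $\overline{\mathcal{R}}_t = (\varphi_t : X_t' \to X; \overline{M}_t) \in \widehat{\Theta}_{C^\infty}(\overline{D})$ satisfying
\[
 \avol(\overline{D}) - \avol(\overline{M}_t) < t^{4/3}, \qquad \langle\overline{D}^{\cdot d}\rangle\overline{E} - \adeg(\overline{M}_t^{\cdot d}\cdot\varphi_t^*\overline{E}) < t^{2/3}.
\]
Applying Proposition~\ref{prop:aSiu} (1) to the nef pair $(\overline{M}_t, \varphi_t^*\overline{E})$ with majorant $\varphi_t^*\overline{A}_0$, whose arithmetic volume equals the $t$-independent $\avol(\overline{A}_0)$ by birational invariance, and using that $\varphi_t^*\overline{D} - \overline{M}_t$ is effective to bound $\avol(\overline{D}+t\overline{E}) \geq \avol(\overline{M}_t + t\varphi_t^*\overline{E})$, I obtain
\[
 \frac{\avol(\overline{D}+t\overline{E}) - \avol(\overline{D})}{t} \geq -t^{1/3} + (d+1)\bigl(\langle\overline{D}^{\cdot d}\rangle\overline{E} - t^{2/3}\bigr) - C_1(t)\avol(\overline{A}_0)\,t,
\]
whose right-hand side tends to $(d+1)\langle\overline{D}^{\cdot d}\rangle\overline{E}$ as $t \to 0^+$, proving the lower bound in the integrable case.

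For the extension to arbitrary $\overline{E} \in \aDiv_{\RR}(X;C^0)$, I would exploit that the functional $\overline{E} \mapsto \phi'_+(0;\overline{E})$ is convex in $\overline{E}$, hence lower-semicontinuous on its domain; together with the continuity of $\overline{E} \mapsto \langle\overline{D}^{\cdot d}\rangle\overline{E}$ (Proposition~\ref{prop:aposcont} (3)), this suffices to pass from the dense integrable subspace to a general $\overline{E}$, with the non-integrable $(0,f)$-contribution of the Green function controlled by Lemma~\ref{lem:defineachivol}. The main technical obstacle throughout is the simultaneous approximation step in the middle paragraph: one must produce a single $\overline{\mathcal{R}}_t$ meeting both polynomial-rate precision targets for $\avol(\overline{D})$ and for $\langle\overline{D}^{\cdot d}\rangle\overline{E}$, a step which hinges on the filtered structure of $\widehat{\Theta}_{\rm ad}(\overline{D})$ (Proposition~\ref{prop:defaposint2}) and is precisely what that filtered structure was designed to permit.
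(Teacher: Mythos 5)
Your overall route is the same as the paper's: combine the arithmetic Siu inequality (Proposition~\ref{prop:aSiu}) with the Fujita-type approximation structure $\widehat{\Theta}(\overline{D})$ and the supremum characterization of arithmetic positive intersection numbers, then handle non-integrable $\overline{E}$ by approximating the Green function and controlling the error via Lemma~\ref{lem:defineachivol}. Where you differ is in the overall architecture. The paper obtains two-sided control by applying Proposition~\ref{prop:aSiu}~(1) \emph{twice}, once to approximations $(\varphi;\overline{M})\in\widehat{\Theta}(\overline{D})$ (which lower-bounds $\avol(\overline{D}+t\overline{E})-\avol(\overline{D})$ by $(d+1)t\langle\overline{D}^{\cdot d}\rangle\overline{E}-Ct^2\avol(\overline{A})$ after taking the filtered sup) and once to approximations of the moving endpoint $\overline{D}+t\overline{E}$ (which gives the reverse inequality with $\langle(\overline{D}+t\overline{E})^{\cdot d}\rangle\overline{E}$), and then closes the sandwich with the continuity of positive intersection numbers (Proposition~\ref{prop:aposcont}~(2)). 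You instead invoke Yuan's Brunn--Minkowski inequality to reduce to a one-sided lower bound and reflect via $-\overline{E}$; this is valid but imports concavity machinery the paper does not need, and your explicit rates $t^{4/3},t^{2/3}$ encode by hand the simultaneous approximation that the paper obtains ``for free'' by taking the filtered supremum. Either way, the crux you correctly identify is that the approximation must vary with $t$ (or equivalently one must take the sup over all of $\widehat{\Theta}(\overline{D})$ \emph{before} dividing by $t$), and Proposition~\ref{prop:defaposint2} is exactly what justifies optimizing $\avol(\overline{M})$ and $\adeg(\overline{M}^{\cdot d}\cdot\varphi^*\overline{E})$ simultaneously.

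One small correction in your last paragraph: the one-sided directional derivative $\overline{E}\mapsto\phi'_+(0;\overline{E})$ inherits \emph{concavity} from the concavity of $\avol^{1/(d+1)}$ (it is superadditive and positively $1$-homogeneous in $\overline{E}$), not convexity, and concavity gives \emph{upper} semicontinuity, not lower. As it happens upper semicontinuity is what you actually need to transfer the lower bound from the integrable dense subspace to general $\overline{E}$, so the argument survives once the sign is fixed, but the cleaner route is the paper's: pick $C^{\infty}$ approximants $\overline{E}+(0,2f_n)$ with $\|f_n\|_{\sup}\to 0$, note that the difference quotients for $\overline{E}$ and $\overline{E}+(0,2f_n)$ differ by at most $(d+1)\|f_n\|_{\sup}\vol(D_{\QQ}+tE_{\QQ})$ by Lemma~\ref{lem:defineachivol}, and pass to the limit using Proposition~\ref{prop:aposcont}~(3); no semicontinuity argument is then required.
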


\begin{proof}
First, we suppose that $\overline{E}$ is integrable, and fix a nef and big arithmetic $\RR$-divisor $\overline{A}$ such that $\overline{A}\pm\overline{E}$ is nef and $\overline{A}-\overline{D}$ is pseudo-effective (see Remark~\ref{rem:integrablenefbig}).
Set $C:=2^dd(d+1)$.
Then by Proposition~\ref{prop:aSiu} (1), for any $t\in\RR$ with $|t|\leq 1$ and for any $(\varphi;\overline{M})\in\widehat{\Theta}(\overline{D})$,
\[
 \avol(\overline{D}+t\overline{E})\geq\avol(\overline{M}+t\varphi^*\overline{E})\geq \avol(\overline{M})+(d+1)\adeg(\overline{M}^{\cdot d}\cdot\varphi^*\overline{E})\cdot t-C\avol(\overline{A})\cdot t^2
\]
and, for any $t\in\RR$ with $|t|\leq 1$ and for any $(\varphi_t;\overline{M}_t)\in\widehat{\Theta}(\overline{D}+t\overline{E})$,
\[
 \avol(\overline{D})\geq\avol(\overline{M}_t-t\overline{E})\geq \avol(\overline{M}_t)-(d+1)\adeg(\overline{M}_t^{\cdot d}\cdot\varphi_t^*\overline{E})\cdot t-C\avol(2\overline{A})\cdot t^2.
\]
Since $\overline{D}+t\overline{E}$ is big for all $t$ with $|t|$ sufficiently small, we have
\begin{equation}
 \avol(\overline{D}+t\overline{E})-\avol(\overline{D})\geq (d+1)t\langle\overline{D}^{\cdot d}\rangle\overline{E}-Ct^2\avol(\overline{A})
\end{equation}
and
\begin{equation}
 \avol(\overline{D})-\avol(\overline{D}+t\overline{E})\geq -(d+1)t\langle(\overline{D}+t\overline{E})^{\cdot d}\rangle\overline{E}-Ct^2\avol(2\overline{A})
\end{equation}
for all $t$ with $|t|\ll 1$ by using Proposition~\ref{prop:avollogconcave} (1).
Thus, by Proposition~\ref{prop:aposcont} (2), we conclude the proof in this case.

Next in general, we can assume that $X$ is generically smooth.
By the Stone-Weierstrass theorem, we can find a sequence of continuous functions $(f_n)_{n\geq 1}$ such that $\overline{E}+(0,2f_n)$ is $C^{\infty}$ and $\|f_n\|_{\sup}\to 0$ as $n\to\infty$.
Since
\begin{multline*}
 \left|\frac{\avol(\overline{D}+t\overline{E})-\avol(\overline{D})}{t}-\frac{\avol(\overline{D}+t(\overline{E}+(0,2f_n)))-\avol(\overline{D})}{t}\right|\\
 \leq (d+1)\|f_n\|_{\sup}\vol(D_{\QQ}+tE_{\QQ})
\end{multline*}
for all $t\in\RR\setminus\{0\}$ and $n\geq 1$, the function $\RR\ni t\mapsto\avol(\overline{D}+t\overline{E})\in\RR$ is differentiable at $t=0$ and
\[
 \lim_{t\to 0}\frac{\avol(\overline{D}+t\overline{E})-\avol(\overline{D})}{t}=(d+1)\langle\overline{D}^{\cdot d}\rangle\overline{E}
\]
by Proposition~\ref{prop:aposcont} (3).
\end{proof}

\begin{corollary}\label{cor:asymorthoapos}
For $\overline{D}\in\aBigCone_{\RR}(X;C^0)$, we have $\avol(\overline{D})=\langle\overline{D}^{\cdot d}\rangle\overline{D}$.
\end{corollary}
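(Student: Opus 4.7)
The plan is to derive the equality by combining Theorem~\ref{thm:diffavol} with the homogeneity of the arithmetic volume function along the ray $\RR_{>0}\overline{D}$. Setting $\overline{E}=\overline{D}$ in the differentiability formula gives
\[
 \lim_{t\to 0}\frac{\avol(\overline{D}+t\overline{D})-\avol(\overline{D})}{t}=(d+1)\langle\overline{D}^{\cdot d}\rangle\overline{D},
\]
so everything reduces to computing the left hand side directly.

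First I would invoke the homogeneity $\avol(\lambda\overline{D})=\lambda^{d+1}\avol(\overline{D})$ for $\lambda\in\RR_{>0}$, which is immediate from the definition (or can be read off from Proposition~\ref{prop:avollogconcave}~(1) together with the positive homogeneity of arithmetic positive intersection numbers noted in Remark~\ref{rem:aposinthomo}~(1)). For $|t|$ sufficiently small, $\overline{D}+t\overline{D}=(1+t)\overline{D}$ is still big, and
\[
 \frac{\avol((1+t)\overline{D})-\avol(\overline{D})}{t}=\frac{(1+t)^{d+1}-1}{t}\avol(\overline{D}),
\]
which tends to $(d+1)\avol(\overline{D})$ as $t\to 0$.

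Comparing with the right hand side of Theorem~\ref{thm:diffavol}, we obtain
\[
 (d+1)\avol(\overline{D})=(d+1)\langle\overline{D}^{\cdot d}\rangle\overline{D},
\]
and dividing by $d+1$ yields the claim.

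There is no real obstacle here: the whole content of the corollary is packed into Theorem~\ref{thm:diffavol}, and the only check is that one may indeed take $\overline{E}=\overline{D}$ and evaluate the trivial one-variable limit. The only thing to be mindful of is that the derivative formula was stated for an arbitrary $\overline{E}\in\aDiv_{\RR}(X;C^0)$, so the choice $\overline{E}=\overline{D}$ is certainly admissible.
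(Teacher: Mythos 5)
Your proof is correct and matches the paper's own argument: the paper likewise derives the corollary by setting $\overline{E}=\overline{D}$ in Theorem~\ref{thm:diffavol} and using the homogeneity $\avol((1+t)\overline{D})=(1+t)^{d+1}\avol(\overline{D})$. You have simply spelled out the one-line computation in more detail.
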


\begin{proof}
This is clear since $\avol((1+t)\overline{D})=(1+t)^{d+1}\avol(\overline{D})$.
\end{proof}

Corollary~\ref{cor:asymorthoapos} can be regarded as a version of the asymptotic orthogonality of the approximate Zariski decompositions.
In particular, we can show that the decompositions $m\mu_m^*\overline{D}=\overline{A}^P(m\overline{D})+\overline{B}^P(m\overline{D})$ given in Proposition~\ref{prop:aposint} is asymptotically orthogonal.
Moriwaki \cite[Theorem~9.3.5]{Moriwaki12a} proved a similar result when $\dim X$ is two.

\begin{corollary}\label{cor:avollimasymortho}
Let $\overline{D}$ be a big arithmetic $\QQ$-divisor, and let $\mu_m^*(m\overline{D})=\overline{A}^P(m\overline{D})+\overline{B}^P(m\overline{D})$ be as in Proposition~\ref{prop:aposint}.
Then we have
\[
 \avol(\overline{D})=\lim_{m\to\infty}\frac{\adeg(\overline{A}^P(m\overline{D})^{\cdot (d+1)})}{m^{d+1}}\quad\text{and}\quad\lim_{m\to\infty}\frac{\adeg(\overline{A}^P(m\overline{D})^{\cdot d}\cdot\overline{B}^P(m\overline{D}))}{m^{d+1}}=0,
\]
where the limit is taken over all $m\geq 1$ with $m\overline{D}\in\aBigCone(X;C^0)$.
\end{corollary}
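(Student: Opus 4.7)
The first equality follows by specialising Proposition~\ref{prop:aposint} to $k=d+1$ (no middle big factors and no trailing nef factors): this gives
\[
\langle\overline{D}^{\cdot(d+1)}\rangle = \lim_{m\to\infty}\frac{\adeg(\overline{A}^P(m\overline{D})^{\cdot(d+1)})}{m^{d+1}},
\]
and Proposition~\ref{prop:avollogconcave}(1) identifies $\langle\overline{D}^{\cdot(d+1)}\rangle$ with $\avol(\overline{D})$.

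For the second limit, the plan is to substitute $\mu_m^*(m\overline{D}) = \overline{A}^P(m\overline{D})+\overline{B}^P(m\overline{D})$ into the mixed term. Since $\overline{A}^P(m\overline{D})+(0,m\gamma)$ is nef for a suitable $\gamma>0$ by Lemma~\ref{lem:metric}(2), $\overline{A}^P(m\overline{D})$ is integrable, and Lemma~\ref{lem:aintkiso} yields by multilinearity
\[
\adeg(\overline{A}^P(m\overline{D})^{\cdot d}\cdot\overline{B}^P(m\overline{D})) = m\,\adeg(\overline{A}^P(m\overline{D})^{\cdot d}\cdot\mu_m^*\overline{D})-\adeg(\overline{A}^P(m\overline{D})^{\cdot(d+1)}).
\]
Dividing by $m^{d+1}$ and applying the first limit to the subtracted piece, the assertion reduces to
\[
\lim_{m\to\infty}\frac{\adeg(\overline{A}^P(m\overline{D})^{\cdot d}\cdot\mu_m^*\overline{D})}{m^d} = \avol(\overline{D}).
\]

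The main obstacle is that Proposition~\ref{prop:aposint} requires the trailing factor to lie in $\aDiv_{\RR}^{\rm Nef}(X;C^0)$, whereas $\overline{D}$ need not be integrable. I would overcome this by approximating $\overline{D}$ by integrable arithmetic $\RR$-divisors $\overline{D}_n := \overline{D}+(0,2f_n)\in\aDiv_{\RR}^{\rm Nef}(X;C^0)$ with $\|f_n\|_{\sup}\to 0$ (via Stone--Weierstrass, choosing $f_n$ so that $\overline{D}_n+N\overline{H}$ has a plurisubharmonic Green function for a fixed ample $\overline{H}$ and $N\gg 1$, so that $\overline{D}_n$ is a difference of nef divisors). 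For each fixed $n$, Proposition~\ref{prop:aposint} with $k=d$ and the single trailing nef slot occupied by $\overline{D}_n$ gives
\[
\langle\overline{D}^{\cdot d}\rangle\overline{D}_n = \lim_{m\to\infty}\frac{\adeg(\overline{A}^P(m\overline{D})^{\cdot d}\cdot\mu_m^*\overline{D}_n)}{m^d}.
\]
The discrepancy $\adeg(\overline{A}^P(m\overline{D})^{\cdot d}\cdot\mu_m^*(0,2f_n))/m^d$ is bounded by $\|f_n\|_{\sup}\vol(D_{\QQ})$ uniformly in $m$: writing $\overline{A}^P(m\overline{D}) = (\overline{A}^P(m\overline{D})+(0,m\gamma))-(0,m\gamma)$, expanding, and noting that intersections carrying two or more purely metric factors vanish by Lemma~\ref{lem:aint2}(1), one applies the standard $L^{\infty}$-estimate for intersections with a $(0,2f)$-factor against $d$ nef divisors together with $\vol(A^P(m\overline{D})_{\QQ})\leq m^d\vol(D_{\QQ})$ (the latter since $A^P(m\overline{D})_{\QQ}$ is nef and dominated by $m\mu_m^*D_{\QQ}$ in the Fujita sense). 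This uniform bound permits interchanging the limits in $m$ and $n$; combined with the continuity of $\overline{E}\mapsto\langle\overline{D}^{\cdot d}\rangle\overline{E}$ (Proposition~\ref{prop:aposcont}(3)) and Corollary~\ref{cor:asymorthoapos} giving $\langle\overline{D}^{\cdot d}\rangle\overline{D}=\avol(\overline{D})$, the required equality follows.
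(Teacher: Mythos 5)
Your proof is correct and follows essentially the same route as the paper: the paper reduces the corollary to the claim that $\lim_m \adeg(\overline{A}^P(m\overline{D})^{\cdot d}\cdot\mu_m^*\overline{E})/m^d = \langle\overline{D}^{\cdot d}\rangle\overline{E}$ for all $\overline{E}$, which it asserts for integrable $\overline{E}$ via Proposition~\ref{prop:aposint} and then handles the general case by approximating $\overline{E}$ by $C^{\infty}$-type divisors; your argument carries out the same reduction and fills in the uniform estimate needed to justify that approximation and interchange of limits.
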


\begin{proof}
What we have to show is
\[
 \lim_{m\to\infty}\frac{\adeg(\overline{A}^P(m\overline{D})^{\cdot d}\cdot\mu_m^*\overline{E})}{m^d}=\langle\overline{D}^{\cdot d}\rangle\overline{E}
\]
for every $\overline{E}$ on $X$.
This is true when $\overline{E}$ is integrable (see Proposition~\ref{prop:aposint}) and, in general, we can approximate $\overline{E}$ by arithmetic $\RR$-divisors of $C^{\infty}$-type.
\end{proof}

In the rest of this section, we would like to apply Theorem~\ref{thm:diffavol} to the problem of the equidistribution of rational points on $X$ (see \cite{Yuan07, Berman_Boucksom, Chen11}).
For $\overline{D}\in\aBigCone_{\RR}(X;C^0)$, we set
\[
 h_{\overline{D}}^+(X):=\frac{\avol(\overline{D})}{(d+1)\vol(D_{\QQ})}.
\]
A sequence $(x_n)_{n\geq 1}$ of rational points on $X$ is called \emph{generic} if for any closed subscheme $Y\subseteq X$, $x_n\notin Y(\overline{\QQ})$ holds for every $n\gg 1$.

\begin{lemma}\label{lem:height}
Let $\overline{D}=a_1\overline{D}_1+\dots+a_l\overline{D}_l$ be a big arithmetic $\RR$-divisor on $X$, where $a_i>0$ and $\overline{D}_i$ is a big arithmetic divisor.
\begin{enumerate}
\item[\textup{(1)}] Suppose that $D_i$ are effective, and let $x\in X(\overline{\QQ})$ be a rational point such that $x\notin\Supp(D_i)$ for all $i$.
Then we have $h_{\overline{D}}(x)\geq 0$.
\item[\textup{(2)}] Let $(x_n)_{n\geq 1}$ be a generic sequence of rational points on $X$.
Then
\[
 \liminf_{n\to\infty}h_{\overline{D}}(x_n)\geq h_{\overline{D}}^+(X).
\]
\end{enumerate}
\end{lemma}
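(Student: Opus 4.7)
For part (1), by linearity of the height one has $h_{\overline{D}}(x) = \sum_i a_i h_{\overline{D}_i}(x)$, and since $a_i > 0$ it suffices to show each $h_{\overline{D}_i}(x) \geq 0$. Because $\overline{D}_i$ is big, for $m \gg 1$ the $\QQ$-span of the set of non-zero $s \in \Hz(X, mD_i)$ with $\|s\|_{\sup}^{m\overline{D}_i} \leq 1$ has $\QQ$-dimension exceeding $[K(x):\QQ]$, so I can choose such an $s$ with $s(x) \neq 0$. Then $m\overline{D}_i + \widehat{(s)}$ has effective divisor part $mD_i + (s) \geq 0$ and non-negative Green function $-\log|s|^2_{m\overline{D}_i}$, hence is an effective arithmetic divisor whose support avoids $x$ (since $x \notin \Supp(D_i)$ and $s(x) \neq 0$). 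Therefore $h_{m\overline{D}_i + \widehat{(s)}}(x) \geq 0$, and combining with $h_{\widehat{(s)}}(x) = 0$ (product formula for the principal arithmetic divisor) gives $h_{\overline{D}_i}(x) \geq 0$.

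For part (2), the first step is to show that $\overline{D} - (0, 2s)$ is big for every $0 \leq s < h_{\overline{D}}^+(X)$. Whenever $\overline{D}' := \overline{D} - (0, 2s)$ is big, computing $\langle \overline{D}'^{\cdot d}\rangle(0, 2)$ from its defining supremum over admissible approximants, together with Lemma~\ref{lem:aint2}(1) (giving $\adeg(\overline{M}^{\cdot d} \cdot (0, 2)) = \deg(M_\QQ^{\cdot d})$) and the classical Fujita approximation on $D_\QQ$, yields $\langle \overline{D}'^{\cdot d}\rangle(0, 2) = \vol(D_\QQ)$. Theorem~\ref{thm:diffavol} then gives $\tfrac{d}{ds}\avol(\overline{D} - (0, 2s)) = -(d+1)\vol(D_\QQ)$ on the big locus, so $\avol(\overline{D} - (0, 2s)) = (d+1)\vol(D_\QQ)(h_{\overline{D}}^+(X) - s)$ is strictly positive exactly for $s < h_{\overline{D}}^+(X)$.

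Now fix $\varepsilon > 0$ and put $t := h_{\overline{D}}^+(X) - \varepsilon$; by the previous paragraph $\overline{D}' := \overline{D} - (0, 2t)$ is big, so the bigness criterion for arithmetic $\RR$-divisors recalled in Section~\ref{sec:arithvol} furnishes a decomposition $\overline{D}' = \sum_{i=1}^l b_i \overline{F}_i$ with $b_i > 0$ and each $\overline{F}_i$ a big arithmetic ($\ZZ$-)divisor. For each $i$ I pick $m_i \gg 1$ and a non-zero $s_i \in \Hz(X, m_i F_i)$ with $\|s_i\|_{\sup}^{m_i \overline{F}_i} \leq 1$, and set $\overline{F}_i' := \overline{F}_i + \frac{1}{m_i}\widehat{(s_i)}$. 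Then $F_i' = F_i + \frac{1}{m_i}(s_i)$ is effective, $\overline{F}_i'$ remains big (since $\avol$ is invariant under adding principal arithmetic divisors), and
\[
 \overline{D}' + \sum_{i=1}^l \frac{b_i}{m_i}\widehat{(s_i)} = \sum_{i=1}^l b_i\overline{F}_i'
\]
meets the hypotheses of part~(1). For a generic sequence $(x_n)$, eventually $x_n$ avoids the finite union $\bigcup_i (\Supp(F_i') \cup \Supp((s_i)))$; at such $n$, part~(1) applied to $\sum b_i \overline{F}_i'$ gives $h_{\sum b_i \overline{F}_i'}(x_n) \geq 0$, and subtracting the vanishing heights of the principal $\widehat{(s_i)}$'s yields $h_{\overline{D}'}(x_n) \geq 0$. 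Since $h_{\overline{D}}(x_n) = h_{\overline{D}'}(x_n) + t$, this gives $h_{\overline{D}}(x_n) \geq h_{\overline{D}}^+(X) - \varepsilon$ for all large $n$; sending $\varepsilon \to 0$ finishes the proof.

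The main technical obstacle is the content of the second paragraph: identifying $\langle \overline{D}'^{\cdot d}\rangle(0, 2)$ with $\vol(D_\QQ)$ and invoking Theorem~\ref{thm:diffavol} to conclude that $\avol(\overline{D} - (0, 2s))$ depletes at the precise linear rate $(d+1)\vol(D_\QQ)$. Once this is in hand — equivalently, once $\overline{D} - (0, 2t)$ is known to be big for every $t < h_{\overline{D}}^+(X)$ — part~(1), the bigness decomposition, and the generic-sequence property assemble the conclusion essentially mechanically.
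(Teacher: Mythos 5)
Your overall strategy for part~(2) — decompose $\overline{D}-(0,2t)$ into big pieces, shift each piece by a small section to make it an effective arithmetic $\QQ$-divisor, invoke genericity to avoid the new supports, and apply part~(1) — is the right idea and is essentially the (tersely stated) content of the paper's argument. But two steps, one in each part, do not go through as written.

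In part~(1) you reduce to finding, for $m\gg 1$, a section $s\in\Hz(X,mD_i)$ with $\|s\|_{\sup}^{m\overline{D}_i}\leq 1$ and $s(x)\neq 0$, and you justify existence by counting: the $\QQ$-span of the small sections has dimension exceeding $[K(x):\QQ]$. That count is not enough. The subspace of sections vanishing at $x$ has $\QQ$-codimension at most $[K(x):\QQ]$ in the full $\Hz(X,mD_i)_\QQ$, but the span $W_m$ of the small sections can perfectly well be contained inside that kernel even when $\dim_\QQ W_m$ is large — this is exactly what happens when $x$ lies in the arithmetic base locus of $\overline{D}_i$, which is in general a nonempty proper subvariety for big but non-nef $\overline{D}_i$. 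The paper's proof of (1) sidesteps this entirely: it just reads the height from the defining formula
\[
 h_{\overline{D}}(x)=\frac{1}{[K(x):\QQ]}\Bigl(\sum_{i}a_i\log\sharp\bigl(\mathcal{O}_{C_x}(D_i)/\mathcal{O}_{C_x}\bigr)+\tfrac{1}{2}\sum_{\sigma}g_{\overline{D}}(x^{\sigma})\Bigr),
\]
and observes that both terms are nonnegative. For the Green-function term to be nonnegative one needs the arithmetic divisors $\overline{D}_i$ — not merely their finite parts $D_i$ — to be effective; that is the hypothesis the proof actually uses (and it is exactly what your own construction of the $\overline{F}_i'$ in part~(2) supplies, so no bigness-plus-dimension argument is needed inside (1) at all).

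In part~(2) the claimed identity $\langle\overline{D}'^{\cdot d}\rangle(0,2)=\vol(D_\QQ)$ is not justified. The quantity on the left is a supremum of $\deg(M_\QQ^{\cdot d})$ over \emph{arithmetic} admissible approximations $(\varphi;\overline{M})\in\widehat{\Theta}_{\rm ad}(\overline{D}')$, and this imposes constraints on $\overline{M}$ (plurisubharmonicity of $g_{\overline{M}}$, nonnegativity of heights, $g_{\overline{M}}\leq g_{\varphi^*\overline{D}'}$) well beyond requiring $M_\QQ$ to be a nef geometric divisor with $\varphi^*D_\QQ-M_\QQ$ effective. Classical Fujita approximation on $D_\QQ$ gives $\sup\deg(M_\QQ^{\cdot d})=\vol(D_\QQ)$ only over the much larger set of \emph{geometric} approximations; you would need a non-trivial lifting statement to realize near-optimal geometric approximants as arithmetic ones, and none is offered. (If the identity were true, the function $s\mapsto\avol(\overline{D}-(0,2s))$ would be \emph{exactly} linear on the big locus, which is a much stronger claim than you need.) The paper's route is both simpler and correct: Lemma~\ref{lem:defineachivol} gives $\avol(\overline{D})-\avol(\overline{D}-(0,2\lambda))\leq (d+1)\lambda\vol(D_\QQ)$ for $\lambda>0$, so $\avol(\overline{D}-(0,2\lambda))\geq (d+1)\vol(D_\QQ)\bigl(h_{\overline{D}}^+(X)-\lambda\bigr)>0$ for all $\lambda<h_{\overline{D}}^+(X)$, and there is no need to invoke Theorem~\ref{thm:diffavol} at all. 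With this replacement, the rest of your part~(2) — the bigness decomposition, the small-section modification, genericity, and the $\varepsilon\to 0$ limit — assembles correctly.
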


\begin{proof}
(1): Let $C_x$ be the arithmetic curve corresponding to $x$.
Since $x\notin\Supp(D_i)$ for all $i$, we have
\[
 h_{\overline{D}}(x)=\frac{1}{[K(x):\QQ]}\left(\sum_{i=1}^la_i\log\sharp\left(\mathcal{O}_{C_x}(D_i)/\mathcal{O}_{C_x}\right)+\frac{1}{2}\sum_{\sigma:K(x)\to\CC}g_{\overline{D}}(x^{\sigma})\right)\geq 0.
\]

(2): For any $\lambda\in\RR$ with $\avol(\overline{D}-(0,2\lambda))>0$, we have $h_{\overline{D}-(0,2\lambda)}(x_n)\geq 0$ for all $n\gg 1$.
Thus
\begin{equation}\label{eqn:height1}
 \liminf_{n\to\infty}h_{\overline{D}}(x_n)\geq\lambda.
\end{equation}
On the other hand, for any $\lambda\in\RR$ with $\avol(\overline{D}-(0,2\lambda))=0$, we have
\begin{equation}\label{eqn:height2}
 \lambda\geq\frac{\avol(\overline{D})}{(d+1)\vol(D_{\QQ})}
\end{equation}
by Lemma~\ref{lem:defineachivol}.
Hence, by (\ref{eqn:height1}) and (\ref{eqn:height2}), we have
\[
 \liminf_{n\to\infty}h_{\overline{D}}(x_n)\geq\sup\{\lambda\in\RR\,|\,\avol(\overline{D}-(0,2\lambda))>0\}\geq\frac{\avol(\overline{D})}{(d+1)\vol(D_{\QQ})}.
\]
\end{proof}

\begin{corollary}\label{cor:equidistthm}
\begin{enumerate}
\item[\textup{(1)}] For $\overline{D}\in\aBigCone_{\RR}(X;C^0)$ and for $f\in C^0(X)$, we have
\[
 \lim_{t\to 0}\frac{h^+_{\overline{D}+t(0,f)}(X)-h^+_{\overline{D}}(X)}{t}=\frac{\langle\overline{D}^{\cdot d}\rangle(0,f)}{\vol(D_{\QQ})}.
\]
\item[\textup{(2)}] Let $(x_n)_{n\geq 1}$ be a generic sequence of rational points on $X$, and let $\overline{D}$ be a big arithmetic $\RR$-divisor on $X$.
If $h_{\overline{D}}(x_n)$ converges to $h_{\overline{D}}^+(X)$, then, for any $f\in C^0(X)$,
\[
 \lim_{n\to\infty}\frac{1}{[K(x_n):\QQ]}\sum_{\sigma:K(x_n)\to\CC}f(x_n^{\sigma})=\frac{\langle\overline{D}^{\cdot d}\rangle(0,2f)}{\vol(D_{\QQ})}.
\]
\end{enumerate}
\end{corollary}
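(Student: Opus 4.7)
The plan is to split the corollary into its two parts and handle (1) as an immediate consequence of Theorem~\ref{thm:diffavol}, and (2) by a standard variational/equidistribution argument based on Lemma~\ref{lem:height}~(2) together with part~(1).

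For (1), note that adding $t(0,f)$ to $\overline{D}$ leaves the underlying divisor $D$ (and hence $\vol(D_{\QQ})$) unchanged. Therefore
\[
 h^+_{\overline{D}+t(0,f)}(X)-h^+_{\overline{D}}(X)=\frac{\avol(\overline{D}+t(0,f))-\avol(\overline{D})}{(d+1)\vol(D_{\QQ})},
\]
and bigness of $\overline{D}+t(0,f)$ for small $|t|$ follows from continuity of $\avol$. Dividing by $t$ and applying Theorem~\ref{thm:diffavol} with $\overline{E}=(0,f)$ yields the claimed derivative.

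For (2), set $A_n(f):=\frac{1}{[K(x_n):\QQ]}\sum_{\sigma}f(x_n^{\sigma})$, so that a direct computation from the definition of heights gives
\[
 h_{\overline{D}+t(0,2f)}(x_n)=h_{\overline{D}}(x_n)+t\,A_n(f).
\]
For $|t|$ sufficiently small, $\overline{D}+t(0,2f)$ remains big, and Lemma~\ref{lem:height}~(2) yields
\[
 \liminf_{n\to\infty}\bigl(h_{\overline{D}}(x_n)+t\,A_n(f)\bigr)\geq h^+_{\overline{D}+t(0,2f)}(X).
\]
Using the hypothesis $h_{\overline{D}}(x_n)\to h^+_{\overline{D}}(X)$ and distinguishing the sign of $t$: for $t>0$ the $\liminf$ on the left equals $h^+_{\overline{D}}(X)+t\liminf_n A_n(f)$, while for $t<0$ it equals $h^+_{\overline{D}}(X)+t\limsup_n A_n(f)$. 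Dividing by $t$ (and flipping the inequality when $t<0$) and letting $t\to 0^{\pm}$, part~(1) (applied with $f$ replaced by $2f$) sandwiches both $\liminf_n A_n(f)$ and $\limsup_n A_n(f)$ by the common value $\langle\overline{D}^{\cdot d}\rangle(0,2f)/\vol(D_{\QQ})$, which proves the equidistribution identity.

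The main obstacle is really already addressed by the earlier machinery: differentiability of $\avol$ along continuous perturbations (Theorem~\ref{thm:diffavol}) together with the continuity of the arithmetic positive intersection number in the last slot (Proposition~\ref{prop:aposcont}~(3)). The remaining work is purely bookkeeping, with the one subtlety being the need to apply the variational inequality from both sides $t\to 0^{+}$ and $t\to 0^{-}$ in order to upgrade a one-sided bound on $\liminf_n A_n(f)$ to the full limit.
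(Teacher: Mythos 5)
Your proof is correct and follows essentially the same route as the paper: part (1) is a direct reformulation of Theorem~\ref{thm:diffavol} using the fact that $(0,f)$ does not change $D_{\QQ}$, and part (2) combines the linearity $h_{\overline{D}+t(0,2f)}(x_n)=h_{\overline{D}}(x_n)+t\,h_{(0,2f)}(x_n)$ with Lemma~\ref{lem:height}~(2) and the two-sided limit $t\to 0^{\pm}$ exactly as in the paper. The only cosmetic difference is that you name the average $A_n(f)$ explicitly, whereas the paper keeps it as $h_{(0,2f)}(x_n)$.
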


\begin{proof}
(1) follows from Theorem~\ref{thm:diffavol}.

(2): Note that
\[
 h_{(0,2f)}(x_n)=\frac{1}{[K(x_n):\QQ]}\sum_{\sigma:K(x_n)\to\CC}f(x_n^{\sigma}).
\]
and
\[
 \liminf_{n\to\infty}h_{\overline{D}+t(0,2f)}(x_n)\geq h_{\overline{D}+t(0,2f)}^+(X)
\]
for all $t$ with $|t|\ll1$ (Lemma~\ref{lem:height}).
Since $h_{\overline{D}}(x_n)\to h_{\overline{D}}(X)$ as $n\to\infty$, we have
\begin{align*}
 \liminf_{n\to\infty}h_{(0,2f)}(x_n) &=\frac{\liminf_{n\to\infty}h_{\overline{D}+t(0,2f)}(x_n)-\lim_{n\to\infty}h_{\overline{D}}(x_n)}{t} \\
 &\geq\frac{h_{\overline{D}+t(0,2f)}^+(X)-h_{\overline{D}}^+(X)}{t}
\end{align*}
for $t>0$ and
\begin{align*}
 \limsup_{n\to\infty}h_{(0,2f)}(x_n) &=\frac{\liminf_{n\to\infty}h_{\overline{D}+t(0,2f)}(x_n)-\lim_{n\to\infty}h_{\overline{D}}(x_n)}{t} \\
 &\leq\frac{h_{\overline{D}+t(0,2f)}^+(X)-h_{\overline{D}}^+(X)}{t}
\end{align*}
for $t<0$.
Thus the sequence $\left(h_{(0,2f)}(x_n)\right)_{n\geq 1}$ converges and we conclude the proof.
\end{proof}

\begin{remark}
We can see from the proof of Corollary~\ref{cor:equidistthm} that the function
\[
 \RR\ni t\mapsto\liminf_{n\to\infty}h_{\overline{D}+t(0,2f)}(x_n)\in\RR
\]
is differentiable at $t=0$ with the same derivative as in Corollary~\ref{cor:equidistthm} (2).
\end{remark}

%%%
\section{A criterion for the pseudo-effectivity}

The goal of this section is to give a numerical characterization of the pseudo-effectivity of arithmetic $\RR$-divisors (Theorem~\ref{thm:psef}).
Our arguments are based on Boucksom-Demailly-Paun-Peternell \cite{Boucksom_Demailly_Paun_Peternell} and uses the generalized Dirichlet unit theorem of Moriwaki \cite{Moriwaki10b}.
Let $X$ be a normal projective arithmetic variety of dimension $d+1$, and let $\overline{D}$ be a big arithmetic $\RR$-divisor on $X$.
To begin with, we give an explicit estimate for the asymptotic orthogonality of admissible approximations under the assumption that $\overline{D}$ is \emph{integrable}.

\begin{proposition}\label{prop:asymortho}
Suppose that $\overline{D}$ is integrable and fix a nef and big arithmetic $\RR$-divisor $\overline{A}$ such that $\overline{A}\pm\overline{D}$ is nef and big.
Then
\[
 \adeg(\overline{M}^{\cdot d}\cdot\overline{F})^2\leq 20\avol(\overline{A})\cdot (\avol(\overline{D})-\avol(\overline{M}))
\]
for any birational morphism of normal projective arithmetic varieties $\varphi:X'\to X$, and for any decomposition $\varphi^*\overline{D}=\overline{M}+\overline{F}$ such that $\overline{M}$ is a nef arithmetic $\RR$-divisor on $X'$ and $\overline{F}$ is a pseudo-effective arithmetic $\RR$-divisor on $X'$.
\end{proposition}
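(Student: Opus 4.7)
The plan is to bound $a:=\adeg(\overline{M}^{\cdot d}\cdot\overline{F})$ by applying the arithmetic Siu inequality (\ref{eqn:aSiu}) to a one-parameter family of nef and big differences, combined with monotonicity of $\avol$ along the pseudo-effective direction, and then optimizing in the parameter.

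\textbf{Setup.} Put $\overline{G}:=\varphi^*\overline{A}-\overline{F}=\varphi^*(\overline{A}-\overline{D})+\overline{M}$; as a sum of a nef and big and a nef arithmetic $\RR$-divisor on $X'$, $\overline{G}$ is nef and big. Similarly $\varphi^*\overline{A}-\overline{M}=\varphi^*(\overline{A}-\overline{D})+\overline{F}$ is pseudo-effective. Abbreviate $V:=\avol(\overline{A})$ and $\Delta:=\avol(\overline{D})-\avol(\overline{M})$. By Yuan's arithmetic Brunn--Minkowski inequality, $\Delta\geq 0$ (since $\varphi^*\overline{D}=\overline{M}+\overline{F}$ with $\overline{F}$ pseudo-effective) and $\Delta\leq\avol(\overline{D})\leq V$ (since $\overline{A}-\overline{D}$ is nef and big).

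\textbf{Siu plus monotonicity.} For $t\in(0,1]$, both $\overline{D}_t:=\overline{M}+t\varphi^*\overline{A}$ and $t\overline{G}$ are nef and big, with $\overline{D}_t-t\overline{G}=\overline{M}+t\overline{F}$. The arithmetic Siu inequality gives
\[
 \avol(\overline{M}+t\overline{F})\geq\adeg(\overline{D}_t^{\cdot(d+1)})-(d+1)t\adeg(\overline{D}_t^{\cdot d}\cdot\overline{G}),
\]
and monotonicity (Brunn--Minkowski with the pseudo-effective $(1-t)\overline{F}$) gives $\avol(\overline{M}+t\overline{F})\leq\avol(\overline{M}+\overline{F})=\avol(\overline{D})$. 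Expanding binomially in $t$, the constant term equals $\avol(\overline{M})$, and the coefficient of $t$ collapses to $(d+1)a$ by the identity $\adeg(\overline{M}^{\cdot d}\cdot\overline{G})=\adeg(\overline{M}^{\cdot d}\cdot\varphi^*\overline{A})-a$. By Lemma~\ref{lem:aint2}(3), every higher-order mixed intersection number is bounded by $V$, since replacing each occurrence of $\overline{M}$ or $\overline{G}$ by $\varphi^*\overline{A}$ only increases the intersection (using the pseudo-effectivity of $\varphi^*\overline{A}-\overline{M}$ and of $\varphi^*\overline{A}-\overline{G}=\overline{F}$). Discarding the non-negative higher-order terms from $\adeg(\overline{D}_t^{\cdot(d+1)})$ and keeping the negative ones from $(d+1)t\adeg(\overline{D}_t^{\cdot d}\cdot\overline{G})$ yields
\[
 \Delta\geq(d+1)ta-(d+1)tV\bigl((1+t)^d-1\bigr).
\]

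\textbf{Optimization.} Substitute $t=c/(d+1)$ with $c\in[0,1]$; the elementary estimates $(1+c/(d+1))^d\leq e^c$ and $e^c-1\leq ec$ on $[0,1]$ reduce the inequality to $\Delta\geq ca-ec^2V$. If $a\leq 2eV$, taking $c=a/(2eV)\in[0,1]$ gives $\Delta\geq a^2/(4eV)$, whence $a^2\leq 4eV\Delta$. If $a>2eV$, taking $c=1$ gives $a\leq\Delta+eV$; combined with $eV<a/2$, this forces $a<2\Delta$, and then $\Delta\leq V$ yields $a^2<4V\Delta$. Since $4e<20$, both cases establish $a^2\leq 20V\Delta$.

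\textbf{Main obstacle.} The delicate issue is that $\overline{F}$ is only pseudo-effective and $\overline{M}$ need not be big, ruling out a direct Khovanskii--Teissier argument for $\overline{F}$ or a differentiation of $\avol$ at $t=0$. The substitution $\overline{M}+t\overline{F}=\overline{D}_t-t\overline{G}$ restores nefness and bigness on both sides so that Siu applies; what makes the estimate sharp enough to close the optimization is the cancellation $\adeg(\overline{M}^{\cdot d}\cdot\varphi^*\overline{A})-\adeg(\overline{M}^{\cdot d}\cdot\overline{G})=a$ in the linear term, together with the two pseudo-effectivity relations $\varphi^*\overline{A}-\overline{M}\geq 0$ and $\varphi^*\overline{A}-\overline{G}=\overline{F}\geq 0$ that bound every higher-order mixed intersection uniformly by $V$.
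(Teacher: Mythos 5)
Your proof is correct, and while it follows the same high-level strategy as the paper---apply the arithmetic Siu inequality to the one-parameter family $\overline{M}+t\overline{F}$, compare with $\avol(\overline{D})$ by monotonicity, and optimize in $t$---the execution is genuinely different. The paper simply invokes Proposition~\ref{prop:aSiu}~(2) as a black box; its internal decomposition $\overline{M}+t\overline{F}=(\overline{M}+t\overline{B})-t(\varphi^*\overline{A}+\overline{M})$ with $\overline{B}=\varphi^*(\overline{A}+\overline{D})$ only dominates the nef factors by $2\varphi^*\overline{A}$, giving the error coefficient $C_2(t)=4d(d+1)(1+2t)^{d-1}$, and the paper then picks $t=\adeg(\overline{M}^{\cdot d}\cdot\overline{F})/(10(d+1)\avol(\overline{A}))$ directly (which requires $\adeg(\overline{M}^{\cdot d}\cdot\overline{F})\leq\avol(\overline{A})$, true for exactly the reason you note). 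You instead re-derive the quadratic bound from the bare Siu inequality~(\ref{eqn:aSiu}) via the sharper decomposition $\overline{M}+t\overline{F}=(\overline{M}+t\varphi^*\overline{A})-t(\varphi^*\overline{A}-\overline{F})$, in which every nef factor ($\overline{M}$, $\overline{G}$, $\varphi^*\overline{A}$) is dominated by $\varphi^*\overline{A}$ itself; the resulting effective error coefficient $(d+1)\bigl[(1+t)^d-1\bigr]/t\leq d(d+1)(1+t)^{d-1}$ is strictly smaller than $C_2(t)$, so your intermediate estimate is tighter. Your optimization by substituting $t=c/(d+1)$ and case-splitting also differs from the paper's one-shot choice of $t$; note your Case~2 ($a>2e\avol(\overline{A})$) is actually vacuous since $a\leq\avol(\overline{A})$ by the same domination, so Case~1 alone already yields the stronger $a^2\leq 4e\avol(\overline{A})\,(\avol(\overline{D})-\avol(\overline{M}))$. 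Both routes give the claimed bound with constant $20$.
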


\begin{proof}
Applying Proposition~\ref{prop:aSiu} (2) to $\overline{M}+t\overline{F}$, we have
\begin{align*}
 &\avol(\overline{D})\geq\avol(\overline{M}+t\overline{F})\\
 &\quad\geq\avol(\overline{M})+(d+1)\adeg(\overline{M}^{\cdot d}\cdot\overline{F})\cdot t-4d(d+1)(1+2t)^{d-1}\avol(\overline{A})\cdot t^2
\end{align*}
for $t\geq 0$.
Set
\[
 0<t=\frac{\adeg(\overline{M}^{\cdot d}\cdot\overline{F})}{10(d+1)\avol(\overline{A})}\leq \frac{1}{10(d+1)}.
\]
Since $(1+2t)^{d-1}\leq \left(1+\frac{1}{5(d+1)}\right)^{d-1}\leq\exp(\frac{1}{5})\leq \frac{5}{4}$, we have
\[
 \avol(\overline{D})\geq\avol(\overline{M})+\frac{\adeg(\overline{M}^{\cdot d}\cdot\overline{F})^2}{20\avol(\overline{A})}.
\]
\end{proof}

Recall that we can uniquely extend the arithmetic intersection product to a continuous multilinear map
\[
 \aDiv_{\RR}(X;C^0)\times\aDiv^{\rm Nef}_{\RR}(X;C^0)^{\times d}\to\RR,\quad (\overline{D}_0;\overline{D}_1,\dots,\overline{D}_d)\mapsto\adeg(\overline{D}_0\cdots\overline{D}_d),
\]
having the property that, if $\overline{D}_0$ is pseudo-effective and $\overline{D}_1,\dots,\overline{D}_d$ are nef, then
\[
 \adeg(\overline{D}_0\cdots\overline{D}_d)\geq 0
\]
(Lemma~\ref{lem:aintkiso}).

\begin{lemma}\label{lem:equalitycond}
\begin{enumerate}
\item[\textup{(1)}] Let $\overline{D}\in\aDiv_{\RR}(X;C^0)$, and let $\overline{H}_1,\dots,\overline{H}_d$ be ample arithmetic $\RR$-divisors on $X$.
If $\overline{D}\geq 0$, then
\[
 \adeg(\overline{D}\cdot\overline{H}_1\cdots\overline{H}_d)\geq 0.
\]
The equality holds if and only if $\overline{D}=0$.
\item[\textup{(2)}] Let $\phi\in\Rat(X)^{\times}\otimes_{\ZZ}\RR$.
If $\widehat{(\phi)}\geq 0$, then $\widehat{(\phi)}=0$.
\end{enumerate}
\end{lemma}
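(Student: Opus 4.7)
For (1), I decompose $\overline{D}=(D,g_{\overline{D}})$ with $D=\sum_i a_i\Gamma_i$ a non-negative $\RR$-combination of distinct prime divisors ($a_i\geq 0$) and $g_{\overline{D}}\geq 0$ a continuous Green function. The key tool is the decomposition
\begin{equation*}
\adeg(\overline{D}\cdot\overline{H}_1\cdots\overline{H}_d)=\sum_i a_i\,\adeg((\Gamma_i,0)\cdot\overline{H}_1\cdots\overline{H}_d)+\frac{1}{2}\int_{X(\CC)}g_{\overline{D}}\,c_1(\overline{H}_1)\wedge\cdots\wedge c_1(\overline{H}_d),
\end{equation*}
which is the standard formula for arithmetic intersections of Hermitian line bundles in the $C^\infty$-case and extends to the present setting by the uniform approximation of $g_{\overline{D}}$ by smooth functions used in the extension step of Lemma~\ref{lem:aintkiso}, together with multilinearity in $\overline{D}$. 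Both terms on the right are a priori non-negative: the first by Lemma~\ref{lem:aint2}(2) applied on (a normalization of) each $\Gamma_i$, and the second because for ample $\overline{H}_i$ the curvature $\omega(\overline{H}_i)$ is a smooth strictly positive $(1,1)$-form on $X(\CC)$ by the very definition of ampleness. This yields the non-negativity half of (1).

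For the equality assertion in (1), I argue that each piece is \emph{strictly} positive whenever nontrivial. The measure $c_1(\overline{H}_1)\wedge\cdots\wedge c_1(\overline{H}_d)$ has a smooth strictly positive density since each factor is a smooth strictly positive $(1,1)$-form, so vanishing of the integral forces $g_{\overline{D}}\equiv 0$ (as $g_{\overline{D}}\in C^0(X)$ is non-negative). Next, I claim $\adeg((\Gamma_i,0)\cdot\overline{H}_1\cdots\overline{H}_d)>0$ for every prime $\Gamma_i$. If $\Gamma_i$ is vertical over a rational prime $p$, the intersection equals $(\log p)\cdot\deg(H_1|_{\Gamma_i}\cdots H_d|_{\Gamma_i})$, which is positive by the geometric Nakai-Moishezon criterion applied to the $d$-dimensional $\FF_p$-variety $\Gamma_i$. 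If $\Gamma_i$ is horizontal, after passing to a generic resolution $\tilde\Gamma_i\to\Gamma_i$ the restrictions $\overline{H}_j|_{\tilde\Gamma_i}$ remain ample arithmetic $\RR$-divisors on a $d$-dimensional arithmetic variety; each such restriction has strictly positive arithmetic volume (ampleness implies bigness), and then the arithmetic Khovanskii-Teissier inequality Theorem~\ref{thm:aKT}(3) forces the mixed top intersection $\adeg(\overline{H}_1|_{\tilde\Gamma_i}\cdots\overline{H}_d|_{\tilde\Gamma_i})$ to be positive as well. Hence $\adeg(\overline{D}\cdot\overline{H}_1\cdots\overline{H}_d)=0$ implies every $a_i=0$ and $g_{\overline{D}}\equiv 0$, i.e., $\overline{D}=0$.

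For (2), write $\phi=\phi_1^{r_1}\cdots\phi_l^{r_l}$ with $\phi_j\in\Rat(X)^{\times}$ and $r_j\in\RR$, so that $\widehat{(\phi)}=\sum_j r_j\widehat{(\phi_j)}$ is integrable. Each principal arithmetic divisor $\widehat{(\phi_j)}$ corresponds to the trivial Hermitian line bundle (trivialized by $\phi_j$), so the arithmetic product formula gives $\adeg(\widehat{(\phi_j)}\cdot\overline{H}_1\cdots\overline{H}_d)=0$ for any integrable $\overline{H}_i$; by multilinearity $\adeg(\widehat{(\phi)}\cdot\overline{H}_1\cdots\overline{H}_d)=0$ for any choice of ample $\overline{H}_1,\dots,\overline{H}_d$. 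Combined with $\widehat{(\phi)}\geq 0$, part (1) immediately yields $\widehat{(\phi)}=0$.

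The main obstacle I foresee is rigorously establishing the decomposition formula of the first paragraph for an effective $\overline{D}$ that is a priori not integrable; this should follow by the same uniform-approximation argument used to extend the intersection pairing in Lemma~\ref{lem:aintkiso}, checking that both the geometric and the analytic term depend continuously on $g_{\overline{D}}$ in the supremum norm. The strict positivity of $\adeg((\Gamma_i,0)\cdot\overline{H}_1\cdots\overline{H}_d)$ in the horizontal case is the other essential input, but is routine once Theorem~\ref{thm:aKT} is in hand.
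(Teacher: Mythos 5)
Your proposal is correct and follows essentially the same route as the paper: you use the same decomposition of $\adeg(\overline{D}\cdot\overline{H}_1\cdots\overline{H}_d)$ into a sum over the prime components $D_i$ plus the archimedean integral against $\omega(\overline{H}_1)\wedge\cdots\wedge\omega(\overline{H}_d)$, and then conclude $a_i=0$ and $g_{\overline{D}}\equiv 0$ from the strict positivity of each term; part (2) is deduced from (1) in the same way via linearity and the product formula. The only difference is that you spell out why each restricted intersection number $\adeg(\overline{H}_1|_{D_i}\cdots\overline{H}_d|_{D_i})$ is strictly positive (vertical case via geometric Nakai--Moishezon, horizontal case via bigness plus Theorem~\ref{thm:aKT}(3)), a point the paper leaves implicit; one minor imprecision is that after a non-finite birational generic resolution of $D_i$ the pulled-back $\overline{H}_j$ are nef and big rather than ample, but that suffices for the positivity you need and could equally be avoided by passing only to the (finite) normalization.
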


\begin{proof}
(1): We write $\overline{D}=(D,g_{\overline{D}})$ and $D=\sum_{i=1}^l a_iD_i$, where $a_i\geq 0$ and $D_i$ is an effective prime divisor.
Suppose that the equality holds.
Note that, since $\overline{H}_1,\dots,\overline{H}_d$ are ample, we can restrict them to $D_i$.
Since
\begin{multline*}
 \adeg(\overline{D}\cdot\overline{H}_1\cdots\overline{H}_d) \\
 =\sum_{i=1}^la_i\adeg(\overline{H}_1|_{D_i}\cdots\overline{H}_d|_{D_i})+\frac{1}{2}\int_{X(\CC)}g_{\overline{D}}\,\omega(\overline{H}_1)\wedge\dots\wedge\omega(\overline{H}_d)=0,
\end{multline*}
we have $a_1=\dots=a_l=0$ and $g_{\overline{D}}\equiv 0$.

(2): Let $\overline{H}$ be an ample arithmetic divisor on $X$.
By the linearity in the last variable, $\adeg(\overline{H}^{\cdot d}\cdot\widehat{(\phi)})=0$ holds.
Thus (2) follows from (1).
\end{proof}

\begin{remark}
One can see that a $\phi\in\Rat(X)^{\times}\otimes_{\ZZ}\RR$ satisfies $\widehat{(\phi)}=0$ if and only if $\phi\in\Hz(X,\mathcal{O}_X^*)\otimes_{\ZZ}\RR\subseteq\Rat(X)^{\times}\otimes_{\ZZ}\RR$.
\end{remark}

\begin{theorem}\label{thm:psef}
Let $X$ be a normal projective arithmetic variety, and let $\overline{D}$ be an arithmetic $\RR$-divisor on $X$.
(We do not assume that $\overline{D}$ is integrable.)
\begin{enumerate}
\item[\textup{(1)}] The following are equivalent.
\begin{enumerate}
\item[\textup{(i)}] $\overline{D}$ is pseudo-effective.
\item[\textup{(ii)}] For any normalized blow-up $\varphi:X'\to X$ and for any nef arithmetic $\RR$-divisor $\overline{H}$ on $X'$, we have
\[
 \adeg(\varphi^*\overline{D}\cdot\overline{H}^{\cdot d})\geq 0.
\]
\item[\textup{(iii)}] For any blowing up $\varphi:X'\to X$ such that $X'$ is generically smooth and normal and for any ample arithmetic $\QQ$-divisor $\overline{H}$ on $X'$, we have
\[
 \adeg(\varphi^*\overline{D}\cdot\overline{H}^{\cdot d})\geq 0.
\]
\end{enumerate}
\item[\textup{(2)}] Suppose that $\overline{D}$ is pseudo-effective.
The following are equivalent.
\begin{enumerate}
\item[\textup{(i)}] There exists a $\phi\in\Rat(X)^{\times}\otimes_{\ZZ}\RR$ such that $\overline{D}=\widehat{(\phi)}$.
\item[\textup{(ii)}] There exist a blowing up $\varphi:X'\to X$ such that $X'$ is generically smooth and normal and an ample arithmetic $\RR$-divisor $\overline{H}$ on $X'$ such that
\[
 \adeg(\varphi^*\overline{D}\cdot\overline{H}^{\cdot d})=0.
\]
\end{enumerate}
\end{enumerate}
\end{theorem}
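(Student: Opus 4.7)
The plan is to prove Part (1) by establishing the cycle (i) $\Rightarrow$ (ii) $\Rightarrow$ (iii) $\Rightarrow$ (i), and Part (2) by the direct (i) $\Rightarrow$ (ii) together with an application of Moriwaki's generalized Dirichlet unit theorem for (ii) $\Rightarrow$ (i). The easy implications (i) $\Rightarrow$ (ii) and (ii) $\Rightarrow$ (iii) of Part (1) follow from the facts that birational pullback preserves pseudo-effectivity (since pullback preserves the big cone, whose closure is the pseudo-effective cone), combined with Lemma~\ref{lem:aintkiso}, and that ample $\QQ$-divisors are nef $\RR$-divisors.

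The heart of Part (1) is (iii) $\Rightarrow$ (i). After reducing to $X$ generically smooth by pullback along a generic resolution, assume $\overline{D}$ is not pseudo-effective, fix an ample big arithmetic $\QQ$-divisor $\overline{A}$, and set
\[
 t_0 := \inf\bigl\{t > 0 : \overline{D}+t\overline{A} \text{ is pseudo-effective}\bigr\}.
\]
Since the pseudo-effective cone is closed and its interior coincides with the big cone, $t_0 > 0$ and $\overline{D}+t\overline{A}$ is big for $t > t_0$ with $\avol(\overline{D}+t\overline{A}) \to 0$ as $t \to t_0^+$. Fix a rational $t$ slightly above $t_0$ and, using Proposition~\ref{prop:replacewithample}, choose an admissible ample approximation $(\varphi : X' \to X; \overline{M}) \in \widehat{\Theta}_{\rm amp}(\overline{D}+t\overline{A})$ with $\avol(\overline{M})$ very close to $\avol(\overline{D}+t\overline{A})$. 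Writing $\varphi^*(\overline{D}+t\overline{A}) = \overline{M}+\overline{F}$, one expands
\[
 \adeg(\overline{M}^{\cdot d}\cdot\varphi^*\overline{D}) = \avol(\overline{M}) + \adeg(\overline{M}^{\cdot d}\cdot\overline{F}) - t\adeg(\overline{M}^{\cdot d}\cdot\varphi^*\overline{A}).
\]
The asymptotic orthogonality (Proposition~\ref{prop:asymortho}) bounds the middle term above by $\sqrt{20\avol(\overline{A}')(\avol(\overline{D}+t\overline{A})-\avol(\overline{M}))}$, which is arbitrarily small for a tight enough approximation, while the arithmetic Khovanskii-Teissier inequality (Theorem~\ref{thm:aKT}) gives $\adeg(\overline{M}^{\cdot d}\cdot\varphi^*\overline{A}) \geq \avol(\overline{M})^{d/(d+1)}\avol(\overline{A})^{1/(d+1)}$. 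Combining, the right-hand side is at most $\avol(\overline{D}+t\overline{A})^{d/(d+1)}\bigl(\avol(\overline{D}+t\overline{A})^{1/(d+1)} - t\avol(\overline{A})^{1/(d+1)}\bigr)$ plus a small error, and this is strictly negative once $t$ is taken close enough to $t_0$. Perturbing $\overline{M}$ to an ample $\QQ$-divisor $\overline{H}$ preserves strict negativity by continuity of the intersection product and density of ample $\QQ$-divisors in the ample $\RR$-divisors, contradicting (iii).

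For Part (2), (i) $\Rightarrow$ (ii) is immediate: $\widehat{(\phi)}$ has zero intersection with every $\overline{H}^{\cdot d}$, and birational pullback sends principal divisors to principal divisors. For (ii) $\Rightarrow$ (i), I first reduce to $X = X'$ by observing that $\varphi^*\overline{D} = \widehat{(\phi)}$ on $X'$ implies $\overline{D} = \varphi_*\varphi^*\overline{D} = \widehat{(\phi)}$ on $X$, since $\varphi$ is birational. Then, on $X$ generically smooth with $\adeg(\overline{D}\cdot\overline{H}^{\cdot d}) = 0$ for ample $\overline{H}$, Moriwaki's generalized Dirichlet unit theorem \cite{Moriwaki10b} applied to the pseudo-effective $\overline{D}$ yields $\phi \in \Rat(X)^{\times}\otimes_{\ZZ}\RR$ with $\overline{D}+\widehat{(\phi)}$ effective. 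Since $\adeg((\overline{D}+\widehat{(\phi)})\cdot\overline{H}^{\cdot d}) = \adeg(\overline{D}\cdot\overline{H}^{\cdot d}) + 0 = 0$ and $\overline{H}$ is ample, Lemma~\ref{lem:equalitycond}(1) forces $\overline{D}+\widehat{(\phi)} = 0$, so $\overline{D} = \widehat{(\phi^{-1})}$.

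The main technical obstacle appears in (iii) $\Rightarrow$ (i): Proposition~\ref{prop:asymortho} requires the integrability of $\overline{D}+t\overline{A}$, which is automatic when $\overline{D}$ itself is integrable but not in general. For non-integrable $\overline{D}$, one approximates in the $C^0$ topology by $C^\infty$ (hence integrable) divisors $\overline{D}_n$, notes that $\overline{D}_n$ is still non-pseudo-effective for large $n$ by closedness of the pseudo-effective cone, applies the integrable case to extract $(\varphi, \overline{H})$ with $\adeg(\varphi^*\overline{D}_n\cdot\overline{H}^{\cdot d}) < 0$, and transfers the strict inequality to $\overline{D}$ by continuity of the intersection product. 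A secondary input, in Part (2), is the precise form of Moriwaki's Dirichlet theorem, which functions here as an external black box providing the principal representative $\phi$.
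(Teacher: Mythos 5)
The core of your Part (1), (iii) $\Rightarrow$ (i) argument for the integrable case is essentially the contrapositive form of the paper's proof: you and the paper both look at the boundary value $t_0 = \sigma$, apply the arithmetic Fujita approximation to $\overline{D}+t\overline{A}$ (the paper's $\overline{D}'+\varepsilon\overline{A}$), and combine the asymptotic orthogonality estimate with the Khovanskii--Teissier inequality. Your quantifier order needs tightening (one must fix $t>t_0$ first, small enough that $\avol(\overline{D}+t\overline{A})^{1/(d+1)} < t\avol(\overline{A})^{1/(d+1)}$, and only then choose the Fujita approximation $\overline{M}$ tight enough; ``once $t$ is taken close enough to $t_0$'' on its own does not produce strict negativity, since the dominant term $\avol(\overline{M})^{d/(d+1)}\bigl(\avol(\overline{M})^{1/(d+1)}-t\avol(\overline{A})^{1/(d+1)}\bigr)$ tends to $0$ as $t\to t_0^+$), but the estimate itself is the right one.

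The serious gap is in your reduction to the integrable case. You propose to approximate a non-integrable $\overline{D}$ by $C^\infty$ divisors $\overline{D}_n$, deduce non-pseudo-effectivity of $\overline{D}_n$, extract $(\varphi,\overline{H})$ with $\adeg(\varphi^*\overline{D}_n\cdot\overline{H}^{\cdot d}) < 0$, and ``transfer the strict inequality to $\overline{D}$ by continuity.'' This transfer does not work as stated: the blow-up and ample divisor $(\varphi,\overline{H})$ are produced by the integrable-case argument applied to $\overline{D}_n$, so they and the magnitude $|\adeg(\varphi^*\overline{D}_n\cdot\overline{H}^{\cdot d})|$ all depend on $n$. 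The perturbation you must absorb, $\adeg(\varphi^*(\overline{D}-\overline{D}_n)\cdot\overline{H}^{\cdot d})$, is bounded only by a quantity involving $\deg(H_\QQ^{\cdot d})$, which is likewise uncontrolled as $n$ varies. Moreover, even with a one-sided perturbation $\overline{D}_n = \overline{D}-(0,f_n)$, $f_n\geq 0$ (which does preserve non-pseudo-effectivity), the correction term $\adeg((0,f_n)\cdot\overline{H}^{\cdot d})\geq 0$ pushes the value up rather than down, so the strict negativity may be destroyed. The paper sidesteps all of this by going in the forward direction: with $f_n\geq 0$, hypothesis (iii) for $\overline{D}$ automatically gives (iii) for $\overline{D}+(0,f_n)$, the integrable case shows $\overline{D}+(0,f_n)$ is pseudo-effective, and then $\avol(\overline{D}+\overline{B}) = \lim_n\avol(\overline{D}+\overline{B}+(0,f_n)) \geq \avol(\overline{B}) > 0$ for every big $\overline{B}$.

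Part (2) also has a missing step. Before Moriwaki's generalized Dirichlet unit theorem can be applied, one must show that the generic fibre $\varphi^*D_\QQ$ is numerically trivial on $X'_\QQ$; your write-up applies the theorem directly to ``the pseudo-effective $\overline{D}$'' with no discussion of the generic fibre. The paper derives numerical triviality by first proving $\adeg(\varphi^*\overline{D}\cdot\overline{D}_1\cdots\overline{D}_d)=0$ for arbitrary $C^\infty$ test divisors (by sandwiching between multiples of $\overline{H}$), and then taking $\overline{D}_1=(0,2)$ via Lemma~\ref{lem:aint2}(1) to obtain $\deg(\varphi^*D_\QQ\cdot H_\QQ^{\cdot(d-1)})=0$. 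Also, the ``reduce to $X=X'$'' step should come last, not first: one runs the Dirichlet argument on $X'$ for $\varphi^*\overline{D}$, obtains $\varphi^*\overline{D}=\widehat{(\phi)}$, and then descends to $X$ using normality; as written, you assume the equality on $X'$ in order to reduce to $X$, before having proved it.
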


\begin{proof}
(1): (i) $\Rightarrow$ (ii) and (ii) $\Rightarrow$ (iii) are clear.

(iii) $\Rightarrow$ (i): First, we assume that $\overline{D}$ is integrable and fix a nef and big arithmetic $\QQ$-divisor $\overline{A}$ on $X$ such that $\overline{A}\pm\overline{D}$ is nef and big.
Set $\sigma:=-s(\overline{D},\overline{A}):=-\sup\{t\in\RR\,|\,\text{$\overline{D}-t\overline{A}$ is pseudo-effective}\}$.
If $\sigma\leq 0$, then $\overline{D}$ is pseudo-effective, so that we can assume $\sigma>0$ and try to deduce a contradiction from it.
Set $\overline{D}':=\overline{D}+\sigma\overline{A}$.
Then, for any blowing up $\varphi:Y\to X$ such that $Y$ is generically smooth and normal and for any ample arithmetic $\QQ$-divisor $\overline{H}$ on $Y$, we have
\begin{equation}\label{eqn:psef0}
 \adeg(\varphi^*\overline{D}'\cdot\overline{H}^{\cdot d})=\adeg(\varphi^*\overline{D}\cdot\overline{H}^{\cdot d})+\sigma\adeg(\varphi^*\overline{A}\cdot\overline{H}^{\cdot d})\geq\sigma\adeg(\varphi^*\overline{A}\cdot\overline{H}^{\cdot d}).
\end{equation}
Note that $\overline{D}'$ is pseudo-effective, integrable, and $\avol(\overline{D}')=0$.
Thus $\overline{D}'+\varepsilon\overline{A}$ is big and integrable for every $\varepsilon$ with $0<\varepsilon<1$.
By applying the arithmetic Fujita approximation to $\overline{D}'+\varepsilon\overline{A}$, one can find a blow-up $\varphi:X'\to X$ such that $X'$ is generically smooth and normal and a decomposition
\begin{equation}\label{eqn:psef1}
 \varphi^*(\overline{D}'+\varepsilon\overline{A})=\overline{M}+(\overline{F}+\widehat{(\phi)})
\end{equation}
such that $\overline{M}$ is an ample arithmetic $\QQ$-divisor, $\overline{F}$ is an effective arithmetic $\RR$-divisor, $\phi\in\Rat(X')^{\times}\otimes_{\ZZ}\RR$, and
\begin{equation}\label{eqn:psef2}
 \frac{1}{2}\varepsilon^{d+1}\avol(\overline{A})\leq\avol(\overline{M})\leq\avol(\overline{D}'+\varepsilon\overline{A})\leq\avol(\overline{M})+\varepsilon^{2(d+1)}
\end{equation}
(see Proposition~\ref{prop:replacewithample}).
Since $(\sigma+2)\overline{A}\pm(\overline{D}'+\varepsilon\overline{A})=(\overline{A}\pm\overline{D})+((\sigma+1)\pm(\sigma+\varepsilon)\overline{A})$ is nef and big, we can apply Proposition~\ref{prop:asymortho} to the decomposition (\ref{eqn:psef1}) and obtain
\begin{equation}\label{eqn:psef3}
 \adeg(\overline{M}^{\cdot d}\cdot\overline{F})^2\leq 20(\sigma+2)^{d+1}\avol(\overline{A})\varepsilon^{2(d+1)}.
\end{equation}
Moreover, by Theorem~\ref{thm:aKT} (2), we have
\begin{equation}\label{eqn:psef4}
 \adeg(\varphi^*\overline{A}\cdot\overline{M}^{\cdot d})\geq\avol(\overline{A})^{\frac{1}{d+1}}\cdot\avol(\overline{M})^{\frac{d}{d+1}}.
\end{equation}
Hence, by (\ref{eqn:psef0}), (\ref{eqn:psef2}), (\ref{eqn:psef3}), and (\ref{eqn:psef4}), we have
\begin{align*}
 0<\sigma &\leq \frac{\adeg(\varphi^*\overline{D}'\cdot\overline{M}^{\cdot d})}{\adeg(\varphi^*\overline{A}\cdot\overline{M}^{\cdot d})}\leq \frac{\adeg(\varphi^*(\overline{D}'+\varepsilon\overline{A})\cdot\overline{M}^{\cdot d})}{\avol(\overline{A})^{\frac{1}{d+1}}\cdot\avol(\overline{M})^{\frac{d}{d+1}}}=\frac{\avol(\overline{M})+\adeg(\overline{M}^{\cdot d}\cdot\overline{F})}{\avol(\overline{A})^{\frac{1}{d+1}}\cdot\avol(\overline{M})^{\frac{d}{d+1}}} \\
 &\leq\left(\frac{\avol(\overline{D}'+\varepsilon\overline{A})}{\avol(\overline{A})}\right)^{\frac{1}{d+1}}+\varepsilon\cdot 2^{\frac{d}{d+1}}\cdot\left(\frac{20(\sigma+2)^{d+1}}{\avol(\overline{A})}\right)^{\frac{1}{2}}.
\end{align*}
This leads us to a contradiction since the right-hand-side tends to zero as $\varepsilon\to 0$.

Next, we consider the general case.
We assume that $X$ is generically smooth and choose a sequence of non-negative continuous functions $(f_n)_{n\geq 1}$ such that $\overline{D}+(0,f_n)$ is $C^{\infty}$ and $\|f_n\|_{\sup}\to 0$ as $n\to\infty$.
Since
\[
 \adeg(\varphi^*(\overline{D}+(0,f_n))\cdot\overline{H}^{\cdot d})\geq 0
\]
for any blow-up $\varphi:Y\to X$ such that $Y$ is generically smooth and normal and for any ample arithmetic $\QQ$-divisor $\overline{H}$ on $Y$, $\overline{D}+(0,f_n)$ is pseudo-effective for every $n$.
Thus, for every big arithmetic $\RR$-divisor $\overline{B}$ on $X$, we have
\[
 \avol(\overline{D}+\overline{B}+(0,f_n))\geq\avol(\overline{B})>0.
\]
This implies that $\overline{D}$ is pseudo-effective.

(2): Since (i) $\Rightarrow$ (ii) is obvious, we are going to show (ii) $\Rightarrow$ (i).
First we show that for any arithmetic $\RR$-divisors of $C^{\infty}$-type, $\overline{D}_1,\dots,\overline{D}_d$, on $X'$ we have
\begin{equation}\label{eqn:psefequal}
 \adeg(\varphi^*\overline{D}\cdot\overline{D}_1\cdots\overline{D}_d)=0.
\end{equation}
Suppose that $\overline{H}_1,\dots,\overline{H}_d$ are all ample.
One can find an $\alpha\gg 0$ such that $\alpha\overline{H}-\overline{H}_i$ is nef and big for every $i$.
Since
\[
 0\leq\adeg(\varphi^*\overline{D}\cdot\overline{H}_1\cdots\overline{H}_d)\leq\adeg(\varphi^*\overline{D}\cdot(\alpha\overline{H})\cdots\overline{H}_d)\leq\cdots\leq\alpha^d\adeg(\varphi^*\overline{D}\cdot\overline{H}^{\cdot d})=0,
\]
we have $\adeg(\varphi^*\overline{D}\cdot\overline{H}_1\cdots\overline{H}_d)=0$.
Since each $\overline{D}_i$ can be written as a difference of two ample arithmetic $\RR$-divisors, we have (\ref{eqn:psefequal}).
Hence, in particular,
\[
 \deg(\varphi^*D_{\QQ}\cdot H_{\QQ}^{\cdot (d-1)})=\adeg(\varphi^*\overline{D}\cdot(0,2)\cdot\overline{H}^{\cdot (d-1)})=0.
\]
Therefore, $\varphi^*D_{\QQ}$ is numerically trivial on $X_{\QQ}'$ and one can apply the generalized Dirichlet theorem of Moriwaki \cite{Moriwaki10b} to $\overline{D}$.
There exists a $\phi\in\Rat(X')^{\times}\otimes_{\ZZ}\RR$ such that $\varphi^*\overline{D}-\widehat{(\phi)}$ is effective.
Thus by Lemma~\ref{lem:equalitycond} (1), we have $\varphi^*\overline{D}=\widehat{(\phi)}$.
This descends to $X$ since $X$ is normal.
\end{proof}

%%%
\section{Concavity of the arithmetic volumes}

In this section, we obtain an arithmetic version of the Discant inequality (Theorem~\ref{thm:discant}) and prove that the arithmetic volume function is strictly concave over the cone of nef and big arithmetic $\RR$-divisors (Theorem~\ref{thm:strictconcave}).
As applications, we give some numerical characterizations of the Zariski decompositions (Corollary~\ref{cor:Zariski} and Proposition~\ref{prop:Zariski}).

\begin{theorem}[An arithmetic Discant inequality]\label{thm:discant}
Let $X$ be a normal projective arithmetic variety of dimension $d+1$, and let $\overline{D}$ and $\overline{P}$ be two big arithmetic $\RR$-divisors on $X$.
If $\overline{P}$ is nef, then we have
\[
 0\leq\left(\left(\langle\overline{D}^{\cdot d}\rangle\overline{P}\right)^{\frac{1}{d}}-s\avol(\overline{P})^{\frac{1}{d}}\right)^{d+1}\leq\left(\langle\overline{D}^{\cdot d}\rangle\overline{P}\right)^{1+\frac{1}{d}}-\avol(\overline{D})\avol(\overline{P})^{\frac{1}{d}},
\]
where $s=s(\overline{D},\overline{P}):=\sup\{t\in\RR\,|\,\text{$\overline{D}-t\overline{P}$ is pseudo-effective}\}$.
\end{theorem}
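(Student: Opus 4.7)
The plan is to introduce the auxiliary function
\[
F(t) := \langle(\overline{D}-t\overline{P})^{\cdot d}\rangle\overline{P},\qquad t\in[0,s),
\]
reduce Theorem~A to an integral estimate for $F$, and combine two inputs: a reverse bound on $F$ coming from the Brunn--Minkowski inequality for positive intersection numbers, and an integral representation of $\avol(\overline{D})$ coming from the differentiability of the arithmetic volume.

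First I will prove the subadditivity estimate $F(t)^{1/d}\leq F(0)^{1/d}-t\avol(\overline{P})^{1/d}$ for $t\in[0,s)$. Writing $\overline{D}=(\overline{D}-t\overline{P})+t\overline{P}$ as a sum of two big arithmetic $\RR$-divisors and applying Proposition~\ref{prop:avollogconcave}(4) with $k=d$ (the unique nef slot being occupied by $\overline{P}$) gives
\[
(\langle\overline{D}^{\cdot d}\rangle\overline{P})^{1/d}\geq F(t)^{1/d}+t(\langle\overline{P}^{\cdot d}\rangle\overline{P})^{1/d}.
\]
By Remark~\ref{rem:aposinthomo}(3) and Proposition~\ref{prop:avollogconcave}(1) one has $\langle\overline{P}^{\cdot d}\rangle\overline{P}=\langle\overline{P}^{\cdot(d+1)}\rangle=\avol(\overline{P})$, which yields the claim. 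Letting $t\to s^-$ and using $F(t)\geq 0$ already establishes the left-hand inequality of Theorem~A.

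Next I will prove the integral representation
\[
\avol(\overline{D})=\int_0^s (d+1)F(t)\,dt.
\]
The function $\phi(t):=\avol(\overline{D}-t\overline{P})$ is continuous on $[0,s]$ by continuity of the arithmetic volume; on $[0,s)$ the divisor $\overline{D}-t\overline{P}$ remains big (write it as $(\overline{D}-s\overline{P})+(s-t)\overline{P}$, a sum of pseudo-effective and big), so Theorem~\ref{thm:diffavol} gives $\phi'(t)=-(d+1)F(t)$; and $\phi(s)=0$ since $\overline{D}-s\overline{P}$ lies on the boundary of the pseudo-effective cone and hence is not big. Since $F$ is bounded above by $F(0)$ (monotonicity from Proposition~\ref{prop:aposcont}(1)), the fundamental theorem of calculus on $[0,s-\varepsilon]$ together with passage to the limit $\varepsilon\to 0$ yields the formula.

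Combining the two ingredients gives
\[
\avol(\overline{D})\leq (d+1)\int_0^s\bigl(F(0)^{1/d}-t\avol(\overline{P})^{1/d}\bigr)^d\,dt,
\]
and the change of variable $u=F(0)^{1/d}-t\avol(\overline{P})^{1/d}$ explicitly evaluates the right-hand side to $\avol(\overline{P})^{-1/d}\bigl(F(0)^{(d+1)/d}-(F(0)^{1/d}-s\avol(\overline{P})^{1/d})^{d+1}\bigr)$; clearing the denominator is precisely the right-hand inequality of Theorem~A. The only real subtlety is the boundary behavior at $t=s$ in the integration step, but this is controlled purely by the monotonicity of $F$ and the continuity of $\avol$ on the pseudo-effective cone, so no new ingredient beyond the results of Sections~3 and~5 is needed.
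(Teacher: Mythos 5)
Your proposal is correct and follows essentially the same route as the paper: the integral representation $\avol(\overline{D})=(d+1)\int_0^s\langle(\overline{D}-t\overline{P})^{\cdot d}\rangle\overline{P}\,dt$ derived from Theorem~\ref{thm:diffavol}, combined with the pointwise bound $\langle(\overline{D}-t\overline{P})^{\cdot d}\rangle\overline{P}\leq\bigl((\langle\overline{D}^{\cdot d}\rangle\overline{P})^{1/d}-t\avol(\overline{P})^{1/d}\bigr)^d$ from Proposition~\ref{prop:avollogconcave}(4), and then explicit integration. You have merely made explicit the justifications (bigness of $\overline{D}-t\overline{P}$ for $t<s$, the boundary behavior at $t=s$, the identity $\langle\overline{P}^{\cdot d}\rangle\overline{P}=\avol(\overline{P})$) that the paper leaves terse.
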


\begin{proof}
Since $\avol(\overline{D}-t\overline{P})>0$ for $t<s$ and $\avol(\overline{D}-s\overline{P})=0$, we have
\begin{equation}\label{eqn:discant1}
 \avol(\overline{D})=(d+1)\int_{t=0}^s\langle(\overline{D}-t\overline{P})^{\cdot d}\rangle\overline{P}\,dt
\end{equation}
by Theorem~\ref{thm:diffavol}.
On the other hand,
\begin{equation}\label{eqn:discant2}
 0\leq\langle(\overline{D}-t\overline{P})^{\cdot d}\rangle\overline{P}\leq\left(\left(\langle\overline{D}^{\cdot d}\rangle\overline{P}\right)^{\frac{1}{d}}-t\avol(\overline{P})^{\frac{1}{d}}\right)^d
\end{equation}
for all $t<s$ by Proposition~\ref{prop:avollogconcave} (4).
By (\ref{eqn:discant1}) and (\ref{eqn:discant2}), we have
\begin{align*}
 \avol(\overline{D})\avol(\overline{P})^{\frac{1}{d}} &\leq (d+1)\avol(\overline{P})^{\frac{1}{d}}\int_{t=0}^s\left(\left(\langle\overline{D}^{\cdot d}\rangle\overline{P}\right)^{\frac{1}{d}}-t\avol(\overline{P})^{\frac{1}{d}}\right)^d\,dt\\
 &=\left(\langle\overline{D}^{\cdot d}\rangle\overline{P}\right)^{1+\frac{1}{d}}-\left(\left(\langle\overline{D}^{\cdot d}\rangle\overline{P}\right)^{\frac{1}{d}}-s\avol(\overline{P})^{\frac{1}{d}}\right)^{d+1}
\end{align*}
as desired.
\end{proof}

\begin{remark}
Let $\overline{D}$ and $\overline{E}$ be two big arithmetic $\RR$-divisors on $X$.
By the same arguments as above, we can prove
\[
 0\leq\left(\langle\overline{D}^{\cdot d}\cdot\overline{E}\rangle^{\frac{1}{d}}-s'\avol(\overline{E})^{\frac{1}{d}}\right)^{d+1}\leq\langle\overline{D}^{\cdot d}\cdot\overline{E}\rangle^{1+\frac{1}{d}}-\avol(\overline{D})\avol(\overline{E})^{\frac{1}{d}},
\]
where we set $s':=\inf_{(\varphi;\overline{M})\in\widehat{\Theta}(\overline{E})}s(\overline{D},\overline{M})\geq s(\overline{D},\overline{E})$.
If $\overline{E}$ is not nef, then $s'>s$ in general.
\end{remark}

\begin{corollary}\label{cor:Teissier}
\begin{enumerate}
\item[\textup{(1)}] Let $\overline{D},\overline{P}$ be big arithmetic $\RR$-divisors.
If $\overline{P}$ is nef, then
\[
 \frac{\left(\langle\overline{D}^{\cdot d}\rangle\overline{P}\right)^{\frac{1}{d}}-\left(\left(\langle\overline{D}^{\cdot d}\rangle\overline{P}\right)^{1+\frac{1}{d}}-\avol(\overline{D})\avol(\overline{P})^{\frac{1}{d}}\right)^{\frac{1}{d+1}}}{\avol(\overline{P})^{\frac{1}{d}}}\leq s(\overline{D},\overline{P})\leq\frac{\avol(\overline{D})}{\langle\overline{D}^{\cdot d}\rangle\overline{P}}.
\]
\item[\textup{(2)}] Suppose that $d=1$.
Let $\overline{D},\overline{E}$ be nef and big arithmetic $\RR$-divisors.
Then
\[
 \frac{\avol(\overline{E})^2}{4}\left(\frac{1}{s(\overline{E},\overline{D})}-s(\overline{D},\overline{E})\right)^2\leq\adeg(\overline{D}\cdot\overline{E})^2-\avol(\overline{D})\avol(\overline{E}).
\]
\end{enumerate}
\end{corollary}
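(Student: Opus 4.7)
The plan is to treat the two bounds in (1) separately, then deduce (2) by combining them symmetrically. The lower bound in (1) will follow at once from Theorem~\ref{thm:discant}: since $\bigl((\langle\overline{D}^{\cdot d}\rangle\overline{P})^{1/d}-s\avol(\overline{P})^{1/d}\bigr)^{d+1}$ is shown there to be non-negative, taking its $(d+1)$-st root and isolating $s$ produces exactly the stated expression.

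For the upper bound in (1), the key step will be to verify that $\overline{D}-s\overline{P}$ is itself pseudo-effective. To this end I fix a big arithmetic $\RR$-divisor $\overline{A}$ and pick any $t<s$; then $\overline{A}+(s-t)\overline{P}$ is big, being the sum of a big divisor and a pseudo-effective one and therefore big by Yuan's arithmetic Brunn--Minkowski inequality. By pseudo-effectivity of $\overline{D}-t\overline{P}$ the sum $(\overline{D}-t\overline{P})+(\overline{A}+(s-t)\overline{P})=\overline{D}-s\overline{P}+\overline{A}$ is then big, so $\overline{D}-s\overline{P}$ is pseudo-effective. Proposition~\ref{prop:aposcont}~(4) applied with $\overline{E}=\overline{D}-s\overline{P}$ yields $\langle\overline{D}^{\cdot d}\rangle(\overline{D}-s\overline{P})\geq 0$, and combined with linearity of the positive intersection number in its last slot and Corollary~\ref{cor:asymorthoapos} (which identifies $\avol(\overline{D})$ with $\langle\overline{D}^{\cdot d}\rangle\overline{D}$), this gives $\avol(\overline{D})\geq s\cdot\langle\overline{D}^{\cdot d}\rangle\overline{P}$, which is the desired upper bound.

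For (2), when $d=1$ and $\overline{D},\overline{E}$ are nef and big every positive intersection number collapses to the ordinary arithmetic intersection by Remark~\ref{rem:aposinthomo}~(3). Setting $\sigma:=\adeg(\overline{D}\cdot\overline{E})$ and $\Delta:=\sqrt{\sigma^2-\avol(\overline{D})\avol(\overline{E})}$, which is a non-negative real by Theorem~\ref{thm:aKT}~(1), the Discant inequality applied to the two ordered pairs $(\overline{D},\overline{E})$ and $(\overline{E},\overline{D})$ gives
\[
s(\overline{D},\overline{E})\geq\frac{\sigma-\Delta}{\avol(\overline{E})}\quad\text{and}\quad s(\overline{E},\overline{D})\geq\frac{\sigma-\Delta}{\avol(\overline{D})}.
\]
Combining the second with the upper bound from (1) applied to $(\overline{D},\overline{E})$ and using $\avol(\overline{D})\avol(\overline{E})=\sigma^2-\Delta^2$, I would compute
\[
\frac{1}{s(\overline{E},\overline{D})}-s(\overline{D},\overline{E})\leq\frac{\avol(\overline{D})}{\sigma-\Delta}-\frac{\sigma-\Delta}{\avol(\overline{E})}=\frac{2\Delta}{\avol(\overline{E})}.
\]
The left-hand side is non-negative, since the upper bounds of (1) force $s(\overline{D},\overline{E})\cdot s(\overline{E},\overline{D})\leq\avol(\overline{D})\avol(\overline{E})/\sigma^2\leq 1$ by Khovanskii--Teissier, so squaring yields the claimed inequality. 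The only genuinely technical ingredient is the closedness of the pseudo-effective cone invoked in the upper bound of (1); everything else is formal manipulation of the Discant inequality already established.
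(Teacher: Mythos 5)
Your proposal takes essentially the same route as the paper. For (1) the lower bound does come from Theorem~\ref{thm:discant} (both inequalities in the Discant bound are needed: the left one ensures the base of the $(d+1)$-th power is non-negative, so that taking the $(d+1)$-th root is monotone, and it is the right-hand inequality that actually produces the estimate once the root is taken). For the upper bound you use Proposition~\ref{prop:aposcont}~(4), linearity in the last slot, and Corollary~\ref{cor:asymorthoapos}; the paper cites Proposition~\ref{prop:aposcont}~(1) (monotonicity) for the same conclusion, but the two arguments are interchangeable. Part (2) is the same manipulation as in the paper, using the lower and upper bounds from (1) applied to both ordered pairs together with Theorem~\ref{thm:aKT}~(1).

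There is, however, an algebra slip in your verification that $\overline{D}-s\overline{P}$ is pseudo-effective. You assert $(\overline{D}-t\overline{P})+(\overline{A}+(s-t)\overline{P})=\overline{D}-s\overline{P}+\overline{A}$, but the left-hand side equals $\overline{D}+(s-2t)\overline{P}+\overline{A}$, which agrees with the right-hand side only when $t=s$. The correct decomposition is $\overline{D}-s\overline{P}+\overline{A}=(\overline{D}-t\overline{P})+\bigl(\overline{A}-(s-t)\overline{P}\bigr)$; one then chooses $t<s$ close enough to $s$ that $\overline{A}-(s-t)\overline{P}$ is still big (openness of the big cone, i.e.\ continuity of $\avol$), so that pseudo-effectivity of $\overline{D}-t\overline{P}$ forces the sum to be big. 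The appeal to Brunn--Minkowski is also misdirected: the bigness of $\overline{A}+(s-t)\overline{P}$ is not what is needed here. The paper simply asserts pseudo-effectivity of $\overline{D}-s\overline{P}$, so your slip does not alter the overall structure of the argument, but the step as written is incorrect.
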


\begin{proof}
(1): Since $\overline{D}-s(\overline{D},\overline{P})\overline{P}$ is pseudo-effective, we have
\[
 0<s(\overline{D},\overline{P})\langle\overline{D}^{\cdot d}\rangle\overline{P}\leq\avol(\overline{D})\quad\text{and}\quad s(\overline{D},\overline{P})\avol(\overline{P})^{\frac{1}{d}}\leq\left(\langle\overline{D}^{\cdot d}\rangle\overline{P}\right)^{\frac{1}{d}}
\]
by Proposition~\ref{prop:aposcont} (1).
Thus by Theorem~\ref{thm:discant}, we have the result.

(2): Since the left-hand-side of (1) is positive, we have
\begin{equation}\label{eqn:teissier}
 \frac{\adeg(\overline{D}\cdot\overline{E})}{\avol(\overline{E})}\leq\frac{1}{s(\overline{E},\overline{D})}\leq\frac{\avol(\overline{D})}{\adeg(\overline{D}\cdot\overline{E})-\left(\adeg(\overline{D}\cdot\overline{E})^2-\avol(\overline{D})\avol(\overline{E})\right)^{\frac{1}{2}}}.
\end{equation}
Since $\avol(\overline{D})\avol(\overline{E})\leq\adeg(\overline{D}\cdot\overline{E})^2$ by Theorem~\ref{thm:aKT} (1), we have
\[
 \frac{\avol(\overline{E})^2}{4}\left(\frac{1}{s(\overline{E},\overline{D})}-s(\overline{D},\overline{E})\right)^2\leq\adeg(\overline{D}\cdot\overline{E})^2-\avol(\overline{D})\avol(\overline{E})
\]
by (1) and (\ref{eqn:teissier}).
\end{proof}

\begin{theorem}\label{thm:strictconcave}
Let $X$ be a normal projective arithmetic variety of dimension $d+1$, and let $\overline{D}$ and $\overline{E}$ be two nef and big arithmetic $\RR$-divisors on $X$.
The following four conditions are equivalent.
\begin{enumerate}
\item[\textup{(1)}] $\avol(\overline{D}+\overline{E})^{\frac{1}{d+1}}=\avol(\overline{D})^{\frac{1}{d+1}}+\avol(\overline{E})^{\frac{1}{d+1}}$.
\item[\textup{(2)}] The function $i\mapsto\log\adeg(\overline{D}^{\cdot i}\cdot\overline{E}^{\cdot (d-i+1)})$ is affine: that is, for any $i$ with $1\leq i\leq d$, we have $\adeg(\overline{D}^{\cdot i}\cdot\overline{E}^{\cdot (d-i+1)})=\avol(\overline{D})^{\frac{i}{d+1}}\cdot\avol(\overline{E})^{\frac{d-i+1}{d+1}}$.
\item[\textup{(3)}] $\adeg(\overline{D}^{\cdot d}\cdot\overline{E})=\avol(\overline{D})^{\frac{d}{d+1}}\cdot\avol(\overline{E})^{\frac{1}{d+1}}$.
\item[\textup{(4)}] There exists a $\phi\in\Rat(X)^{\times}\otimes_{\ZZ}\RR$ such that
\[
 \frac{\overline{D}}{\avol(\overline{D})^{\frac{1}{d+1}}}=\frac{\overline{E}}{\avol(\overline{E})^{\frac{1}{d+1}}}+\widehat{(\phi)}.
\]
\end{enumerate}
\end{theorem}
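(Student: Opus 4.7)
The plan is to close the cycle $(4)\Rightarrow(2)\Rightarrow(1)$, $(2)\Rightarrow(3)$, $(1)\Rightarrow(2)$, $(3)\Rightarrow(2)$, and finally the substantive $(2)\Rightarrow(4)$. Throughout, write $\alpha:=\avol(\overline{D})^{1/(d+1)}$ and $\beta:=\avol(\overline{E})^{1/(d+1)}$. For $(4)\Rightarrow(2)$, rewrite (4) as $\widehat{(\psi)}=\beta\overline{D}-\alpha\overline{E}$ and expand $\adeg(\overline{D}^i\cdot\overline{E}^{d+1-i})$ binomially in $\overline{D}=(\alpha/\beta)\overline{E}+\widehat{(\psi)}/\beta$; every mixed term carrying a factor of $\widehat{(\psi)}$ vanishes because principal arithmetic divisors pair trivially with integrable tuples (a consequence of Lemma \ref{lem:equalitycond} extended multilinearly), leaving only the pure term $(\alpha/\beta)^i\adeg(\overline{E}^{d+1})=\alpha^i\beta^{d+1-i}$. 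The implication $(2)\Rightarrow(1)$ is the binomial expansion $\avol(\overline{D}+\overline{E})=\adeg((\overline{D}+\overline{E})^{d+1})=(\alpha+\beta)^{d+1}$ (the first equality using that $\overline{D}+\overline{E}$ is nef and big), and $(2)\Rightarrow(3)$ is the specialization $i=d$.

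For $(1)\Rightarrow(2)$, chain the identity $\adeg((\overline{D}+\overline{E})^{d+1})=\sum_i\binom{d+1}{i}\adeg(\overline{D}^i\overline{E}^{d+1-i})$ with Theorem \ref{thm:aKT}(2) at $k=d+1$, which bounds each summand $\adeg(\overline{D}^i\overline{E}^{d+1-i})\geq\alpha^i\beta^{d+1-i}$; the resulting chain ends at $(\alpha+\beta)^{d+1}$, so the equality in (1) forces equality in every summand. For $(3)\Rightarrow(2)$, the function $i\mapsto\log\adeg(\overline{D}^i\overline{E}^{d+1-i})$ is concave on $\{0,1,\dots,d+1\}$ by the remark after Theorem \ref{thm:aKT} and lies above the affine interpolation $L$ through its endpoint values $(d+1)\log\beta$ and $(d+1)\log\alpha$; (3) asserts equality with $L$ at the interior point $i=d$, and since a concave function $f\geq L$ equal to $L$ at one interior point must coincide with $L$ on the entire interval, (2) follows.

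The core step is $(2)\Rightarrow(4)$. Set $\overline{F}:=\beta\overline{D}-\alpha\overline{E}$; the target is $\overline{F}=\widehat{(\phi)}$ for some $\phi\in\Rat(X)^\times\otimes_{\ZZ}\RR$. First, plugging $\adeg(\overline{D}^d\cdot\overline{E})=\alpha^d\beta$ into the arithmetic Discant inequality (Theorem \ref{thm:discant}) applied to $(\overline{D},\overline{E})$, the right-hand side $(\alpha^d\beta)^{(d+1)/d}-\alpha^{d+1}\beta^{(d+1)/d}$ collapses to zero, forcing the middle $(d+1)$-th power to vanish and hence $s(\overline{D},\overline{E})=\alpha/\beta$; closure of the pseudo-effective cone then makes $\overline{D}-(\alpha/\beta)\overline{E}$, and so $\overline{F}$, pseudo-effective. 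The symmetric Discant argument applied to $(\overline{E},\overline{D})$, using $\adeg(\overline{E}^d\cdot\overline{D})=\alpha\beta^d$ (which is (2) at $i=1$), yields that $-\overline{F}$ is pseudo-effective as well. Consequently, for every blow-up $\varphi:X'\to X$ with $X'$ generically smooth and normal and every ample arithmetic $\QQ$-divisor $\overline{H}$ on $X'$, Theorem \ref{thm:psef}(1) applied to $\pm\overline{F}$ forces $\adeg(\varphi^*\overline{F}\cdot\overline{H}^d)$ to be both $\geq 0$ and $\leq 0$, hence $=0$. Theorem \ref{thm:psef}(2)---the arithmetic avatar of Moriwaki's generalized Dirichlet unit theorem---then produces $\phi\in\Rat(X)^\times\otimes_{\ZZ}\RR$ with $\overline{F}=\widehat{(\phi)}$, and dividing by $\alpha\beta$ yields (4). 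The principal obstacle is the simultaneous pseudo-effectivity of $\overline{F}$ and $-\overline{F}$; everything hinges on both sides of the Discant inequality saturating at zero under hypothesis (3), a small but essential calculation, after which the conclusion is a clean application of the pseudo-effectivity criterion.
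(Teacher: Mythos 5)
Your proof is correct and follows essentially the same route as the paper: the chain $(1)\Rightarrow(2)$ via the Khovanskii-Teissier inequalities, $(3)\Rightarrow(2)$ via log-concavity, and the key step $(2)\Rightarrow(4)$ via the arithmetic Discant inequality, closedness of the pseudo-effective cone, and Theorem~\ref{thm:psef} (the generalized Dirichlet unit theorem). The only cosmetic deviations are that you prove $(4)\Rightarrow(2)$ directly by binomial expansion rather than the paper's shorter $(4)\Rightarrow(1)$, and your appeal to Lemma~\ref{lem:equalitycond} for the vanishing of mixed terms is a slight mislabel --- the real point is that $\adeg$ descends to arithmetic linear equivalence classes, a fact used but not enunciated as a lemma in the paper.
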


\begin{proof}
(2) $\Rightarrow$ (3) and (4) $\Rightarrow$ (1) are clear.

(1) $\Rightarrow$ (2) follows from Proposition~\ref{prop:avollogconcave} (2), (3).

We prove (3) $\Rightarrow$ (2) by induction on $i$.
The case where $i=d$ is nothing but (3).
Suppose that the assertion holds for $i$.
Since
\begin{align*}
 \avol(\overline{D})^{\frac{i}{d+1}}\cdot\avol(\overline{E})^{\frac{d-i+1}{d+1}}&=\adeg(\overline{D}^{\cdot i}\cdot\overline{E}^{\cdot (d-i+1)})\\
 &\geq\adeg(\overline{D}^{\cdot (i-1)}\cdot\overline{E}^{\cdot (d-i+2)})^{\frac{1}{2}}\cdot\adeg(\overline{D}^{\cdot (i+1)}\cdot\overline{E}^{\cdot (d-i)})^{\frac{1}{2}}\\
 &\geq\avol(\overline{D})^{\frac{i}{d+1}}\cdot\avol(\overline{E})^{\frac{d-i+1}{d+1}},
\end{align*}
we have $\adeg(\overline{D}^{\cdot (i-1)}\cdot\overline{E}^{\cdot (d-i+2)})=\avol(\overline{D})^{\frac{i-1}{d+1}}\cdot\avol(\overline{E})^{\frac{d-i+2}{d+1}}$.

(2) $\Rightarrow$ (4): By applying Theorem~\ref{thm:discant} to $\overline{D}$ and $\overline{E}$, we have
\[
 s=s(\overline{D},\overline{E})=\left(\frac{\avol(\overline{D})}{\avol(\overline{E})}\right)^{\frac{1}{d+1}}\quad\text{and}\quad s(\overline{E},\overline{D})=\left(\frac{\avol(\overline{E})}{\avol(\overline{D})}\right)^{\frac{1}{d+1}}=s^{-1}.
\]
Let $\varphi:X'\to X$ be a blow-up such that $X'$ is generically smooth and normal, and let $\overline{H}$ be an ample arithmetic divisor on $X'$.
Since both $\varphi^*\overline{D}-s\varphi^*\overline{E}$ and $s\varphi^*\overline{E}-\varphi^*\overline{D}$ are pseudo-effective, we have
\[
 \adeg((\varphi^*\overline{D}-s\varphi^*\overline{E})\cdot\overline{H}^{\cdot d})=0.
\]
Thus, by Theorem~\ref{thm:psef} (2), there exists a $\phi\in\Rat(X)^{\times}\otimes_{\ZZ}\RR$ such that $\overline{D}-s\overline{E}=\widehat{(\phi)}$.
\end{proof}

In Corollaries~\ref{cor:equalnefbig} and \ref{cor:Zariski}, we generalize Moriwaki's results \cite[Corollary~4.2.2]{Moriwaki12b} for arithmetic surfaces to arithmetic varieties of arbitrary dimension.

\begin{corollary}\label{cor:equalnefbig}
Let $\overline{P}$ and $\overline{Q}$ be two nef and big arithmetic $\RR$-divisors.
Suppose that $\avol(\overline{P})=\avol(\overline{Q})$.
\begin{enumerate}
\item[\textup{(1)}] If $\overline{Q}-\overline{P}$ is pseudo-effective, then there exists a $\phi\in\Rat(X)^{\times}\otimes_{\ZZ}\RR$ such that $\overline{Q}-\overline{P}=\widehat{(\phi)}$.
\item[\textup{(2)}] If $\overline{Q}-\overline{P}$ is effective, then $\overline{P}=\overline{Q}$.
\end{enumerate}
\end{corollary}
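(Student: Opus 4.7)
The plan is to reduce (1) to Theorem~\ref{thm:strictconcave} by verifying that the Brunn-Minkowski equality $\avol(\overline{P}+\overline{Q})^{1/(d+1)} = \avol(\overline{P})^{1/(d+1)} + \avol(\overline{Q})^{1/(d+1)}$ holds under our hypotheses, and then translating the principal-divisor identity produced by condition (4) of that theorem back to $\overline{Q}-\overline{P}$. Part (2) will then drop out of part (1) together with Lemma~\ref{lem:equalitycond}(2).

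To establish the equality, I will sandwich $\avol(\overline{P}+\overline{Q})^{1/(d+1)}$ between $2\avol(\overline{P})^{1/(d+1)}$ and $2\avol(\overline{Q})^{1/(d+1)}$, which coincide by the hypothesis $\avol(\overline{P}) = \avol(\overline{Q})$. The lower bound
\[
 \avol(\overline{P}+\overline{Q})^{1/(d+1)} \geq \avol(\overline{P})^{1/(d+1)} + \avol(\overline{Q})^{1/(d+1)} = 2\avol(\overline{P})^{1/(d+1)}
\]
is Yuan's arithmetic Brunn-Minkowski, recorded as Proposition~\ref{prop:avollogconcave}(4) with $k=d+1$. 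For the matching upper bound, the key move is to use the hypothesis that $\overline{Q}-\overline{P}$ is pseudo-effective in order to decompose
\[
 2\overline{Q} = (\overline{P}+\overline{Q}) + (\overline{Q}-\overline{P})
\]
and apply Proposition~\ref{prop:avollogconcave}(4) again to this sum; since $\avol(\overline{Q}-\overline{P}) \geq 0$, the result is
\[
 2\avol(\overline{Q})^{1/(d+1)} = \avol(2\overline{Q})^{1/(d+1)} \geq \avol(\overline{P}+\overline{Q})^{1/(d+1)} + \avol(\overline{Q}-\overline{P})^{1/(d+1)} \geq \avol(\overline{P}+\overline{Q})^{1/(d+1)}.
\]

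With equality in hand, condition (4) of Theorem~\ref{thm:strictconcave} provides $\phi_1,\dots,\phi_l \in \Rat(X)^\times$ and $a_1,\dots,a_l \in \RR$ such that $\overline{P}/\avol(\overline{P})^{1/(d+1)} - \overline{Q}/\avol(\overline{Q})^{1/(d+1)} = \sum_i a_i \widehat{(\phi_i)}$. Since $\avol(\overline{P}) = \avol(\overline{Q})$ is a positive real number, I multiply through and absorb the scalars into a single element $\phi \in \Rat(X)^\times \otimes_\ZZ \RR$ (formally $\phi = \prod_i \phi_i^{-a_i \avol(\overline{P})^{1/(d+1)}}$) to get $\overline{Q}-\overline{P} = \widehat{(\phi)}$, proving (1). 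For (2), effectivity implies pseudo-effectivity, so (1) supplies a $\phi$ with $\widehat{(\phi)} = \overline{Q}-\overline{P} \geq 0$; Lemma~\ref{lem:equalitycond}(2) then forces $\widehat{(\phi)}=0$ and hence $\overline{P}=\overline{Q}$. I do not anticipate a real obstacle, since the essential work has been done upstream in Theorem~\ref{thm:strictconcave} (which itself rests on the numerical criterion Theorem~\ref{thm:psef} and the generalized Dirichlet unit theorem); the only point that requires a moment of care is the two-sided Brunn-Minkowski sandwich, and the pseudo-effectivity hypothesis was designed exactly to make the relevant decomposition $2\overline{Q} = (\overline{P}+\overline{Q}) + (\overline{Q}-\overline{P})$ work.
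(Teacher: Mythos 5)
Your proposal is correct and follows the same overall route as the paper: verify the Brunn--Minkowski equality for $\avol(\overline{P}+\overline{Q})$, invoke Theorem~\ref{thm:strictconcave} to produce the principal arithmetic $\RR$-divisor, and finish part~(2) with Lemma~\ref{lem:equalitycond}(2). One small point to tighten: for the upper bound you cite Proposition~\ref{prop:avollogconcave}(4) with $\overline{E}=\overline{Q}-\overline{P}$, but that proposition requires both summands to be \emph{big}, and $\overline{Q}-\overline{P}$ is only assumed pseudo-effective; the cleanest fix is the paper's own one-liner, namely to use monotonicity directly, $\avol(\overline{P}+\overline{Q})\leq\avol(2\overline{Q})$, which holds because $2\overline{Q}-(\overline{P}+\overline{Q})=\overline{Q}-\overline{P}$ is pseudo-effective (Proposition~\ref{prop:aposcont}(1) with Proposition~\ref{prop:avollogconcave}(1)); by the same token $\avol(2\overline{P})\leq\avol(\overline{P}+\overline{Q})$, and since $\avol(2\overline{P})=\avol(2\overline{Q})$ these collapse to the desired equality without ever invoking Brunn--Minkowski.
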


\begin{proof}
(1): Since $\avol(2\overline{P})\leq\avol(\overline{P}+\overline{Q})\leq\avol(2\overline{Q})$, we have
\[
 \avol(\overline{P}+\overline{Q})^{\frac{1}{d+1}}=\avol(\overline{P})^{\frac{1}{d+1}}+\avol(\overline{Q})^{\frac{1}{d+1}}.
\]
Thus by Theorem~\ref{thm:strictconcave}, there exists a $\phi\in\Rat(X)^{\times}\otimes_{\ZZ}\RR$ such that $\overline{Q}-\overline{P}=\widehat{(\phi)}$.

(2): This follows from (1) and Lemma~\ref{lem:equalitycond} (2).
\end{proof}

Let $X$ be a normal and generically smooth projective arithmetic variety, and let $\overline{D}$ be a big arithmetic $\RR$-divisor on $X$.
A \emph{Zariski decomposition} of $\overline{D}$ is a decomposition $\overline{D}=\overline{P}+\overline{N}$ such that
\begin{enumerate}
\item[\textup{(1)}] $\overline{P}$ is a nef arithmetic $\RR$-divisor,
\item[\textup{(2)}] $\overline{N}$ is an effective arithmetic $\RR$-divisor, and
\item[\textup{(3)}] $\avol(\overline{P})=\avol(\overline{D})$
\end{enumerate}
(see also \cite[\S 4]{Moriwaki12b}).

\begin{corollary}\label{cor:Zariski}
The Zariski decomposition of $\overline{D}$ (if it exists) is unique: that is, if $\overline{D}=\overline{P}'+\overline{N}'$ is another Zariski decomposition of $\overline{D}$, then $\overline{P}=\overline{P}'$ and $\overline{N}=\overline{N}'$.
\end{corollary}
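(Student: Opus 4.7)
My plan is to reduce the statement to an application of Corollary~\ref{cor:equalnefbig} by averaging the two decompositions. Let $\overline{D}=\overline{P}+\overline{N}=\overline{P}'+\overline{N}'$ be two Zariski decompositions; since $\overline{N}=\overline{D}-\overline{P}$ and $\overline{N}'=\overline{D}-\overline{P}'$, it suffices to prove $\overline{P}=\overline{P}'$.

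First I would set $\overline{P}_0:=\tfrac{1}{2}(\overline{P}+\overline{P}')$ and $\overline{N}_0:=\tfrac{1}{2}(\overline{N}+\overline{N}')$, so that $\overline{D}=\overline{P}_0+\overline{N}_0$ with $\overline{P}_0$ nef and big and $\overline{N}_0$ effective by convexity. I would then establish the volume equality $\avol(\overline{P}_0)=\avol(\overline{D})$. Yuan's Brunn-Minkowski inequality applied to $\overline{P}$ and $\overline{P}'$ gives
\[
  2\avol(\overline{P}_0)^{\frac{1}{d+1}}=\avol(\overline{P}+\overline{P}')^{\frac{1}{d+1}}\geq\avol(\overline{P})^{\frac{1}{d+1}}+\avol(\overline{P}')^{\frac{1}{d+1}}=2\avol(\overline{D})^{\frac{1}{d+1}},
\]
so $\avol(\overline{P}_0)\geq\avol(\overline{D})$; conversely, since $\overline{D}-\overline{P}_0=\overline{N}_0$ is effective, monotonicity of the arithmetic volume yields $\avol(\overline{P}_0)\leq\avol(\overline{D})$.

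The key consequence is that the Brunn-Minkowski inequality is an equality for the pair of nef and big arithmetic $\RR$-divisors $\overline{P}$ and $\overline{P}'$, which have the same arithmetic volume $\avol(\overline{D})$. Theorem~\ref{thm:strictconcave} then furnishes a $\psi\in\Rat(X)^{\times}\otimes_{\ZZ}\RR$ with $\overline{P}-\overline{P}'=\widehat{(\psi)}$ (after absorbing the scalar $\avol(\overline{D})^{1/(d+1)}$ into $\psi$), from which $\overline{N}'-\overline{N}=\widehat{(\psi)}$ follows automatically.

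The final step would be to upgrade $\overline{P}-\overline{P}'=\widehat{(\psi)}$ to $\widehat{(\psi)}=0$, and this is where I expect the main obstacle to lie. The cleanest route is via Lemma~\ref{lem:equalitycond}(2), which would conclude if one knew $\widehat{(\psi)}\geq 0$, or via Corollary~\ref{cor:equalnefbig}(2), which would conclude if one knew that one of $\pm(\overline{P}-\overline{P}')$ is effective; the given data only presents $\widehat{(\psi)}$ as the difference $\overline{N}'-\overline{N}$ of two effective divisors and simultaneously as a ``nef-perturbation'' $\overline{P}-\overline{P}'$ of two nef divisors, so the sign information is not immediately visible. I expect this one-sided positivity to be extracted by combining the nefness of both $\overline{P}$ and $\overline{P}'=\overline{P}-\widehat{(\psi)}$, the effectivity of both $\overline{N}$ and $\overline{N}'=\overline{N}+\widehat{(\psi)}$, and Moriwaki's generalized Dirichlet unit theorem (as packaged in Theorem~\ref{thm:psef}), in a manner parallel to the proof of Corollary~\ref{cor:equalnefbig}(2).
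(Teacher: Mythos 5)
Your first two steps track the paper exactly. The midpoint reduction $\overline{P}_0=\tfrac{1}{2}(\overline{P}+\overline{P}')$ is only cosmetically different from the paper's direct chain
\[
 2\avol(\overline{D})^{\frac{1}{d+1}}=\avol(\overline{P})^{\frac{1}{d+1}}+\avol(\overline{P}')^{\frac{1}{d+1}}\leq\avol(\overline{P}+\overline{P}')^{\frac{1}{d+1}}\leq\avol(2\overline{D})^{\frac{1}{d+1}},
\]
and both of you then apply Theorem~\ref{thm:strictconcave} to extract $\psi\in\Rat(X)^{\times}\otimes_{\ZZ}\RR$ with $\overline{P}'=\overline{P}-\widehat{(\psi)}$ and $\overline{N}'=\overline{N}+\widehat{(\psi)}$.

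The gap you flag at the end is genuine, and the route you sketch for closing it is not the one the paper takes and is unlikely to work as stated. The data you propose to combine --- nefness of $\overline{P}$ and of $\overline{P}'=\overline{P}-\widehat{(\psi)}$, effectivity of $\overline{N}$ and of $\overline{N}'=\overline{N}+\widehat{(\psi)}$, and the pseudo-effectivity criterion of Theorem~\ref{thm:psef} --- are all perfectly symmetric under swapping the two decompositions, i.e.\ under $\psi\leftrightarrow\psi^{-1}$, so they cannot single out a sign for $\widehat{(\psi)}$; without a sign, neither Lemma~\ref{lem:equalitycond}(2) nor Corollary~\ref{cor:equalnefbig}(2) applies. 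The paper instead injects an external, purely geometric input, namely \cite[Theorem~4.1.1]{Moriwaki12b}: the negative part of a Zariski decomposition is determined divisor-by-divisor on the generic fiber, giving $\mult_x(N)=\mult_x(N')$ for every codimension-one $x\in X_{\QQ}$. Hence $\mult_x(\psi)=0$ for all such $x$, from which the paper concludes $\widehat{(\psi)}=0$, and therefore $\overline{P}=\overline{P}'$ and $\overline{N}=\overline{N}'$. In short, the missing ingredient is the identification of the generic-fiber negative part, not a further appeal to the arithmetic positivity machinery.
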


\begin{proof}
Since
\[
 2\avol(\overline{D})^{\frac{1}{d+1}}=\avol(\overline{P})^{\frac{1}{d+1}}+\avol(\overline{P}')^{\frac{1}{d+1}}\leq\avol(\overline{P}+\overline{P}')^{\frac{1}{d+1}}\leq\avol(2\overline{D})^{\frac{1}{d+1}}
\]
by the Brunn-Minkowski inequality, there exists a $\phi\in\Rat(X)^{\times}\otimes_{\ZZ}\RR$ such that $\overline{P}'=\overline{P}-\widehat{(\phi)}$ and $\overline{N}'=\overline{N}+\widehat{(\phi)}$ by Theorem~\ref{thm:strictconcave}.
On the other hand, since
\[
 \mult_x(N)=\mult_x(N')
\]
for all $x\in X_{\QQ}$ by \cite[Theorem~4.1.1]{Moriwaki12b}, we have $\mult_x(\phi)=0$ for all $x\in X_{\QQ}$.
Thus $\widehat{(\phi)}=0$.
\end{proof}

Lastly, we relate the Zariski decomposition of $\overline{D}$ in the above sense with arithmetic positive intersection numbers.

\begin{proposition}\label{prop:Zariski}
Let $\overline{D}$ be a big arithmetic $\RR$-divisor, and let $\overline{D}=\overline{P}+\overline{N}$ be a decomposition such that $\overline{P}$ is nef and $\overline{N}$ is effective.
The following two conditions are equivalent.
\begin{enumerate}
\item[\textup{(1)}] $\overline{D}=\overline{P}+\overline{N}$ is a Zariski decomposition of $\overline{D}$ in the above sense: that is, $\avol(\overline{P})=\avol(\overline{D})$.
\item[\textup{(2)}] For any integers $k,n$ with $0\leq k\leq n\leq d$, for any $\overline{D}_k,\dots,\overline{D}_n\in\aBigCone_{\RR}(X;C^0)$ and for any $\overline{D}_{n+1},\dots,\overline{D}_d\in\aDiv_{\RR}^{\rm Nef}(X;C^0)$, we have
\[
 \langle\overline{D}^{\cdot k}\cdot\overline{D}_k\cdots\overline{D}_n\rangle\overline{D}_{n+1}\cdots\overline{D}_d=\langle\overline{D}_k\cdots\overline{D}_n\rangle\overline{P}^{\cdot k}\cdot\overline{D}_{n+1}\cdots\overline{D}_d.
\]
\end{enumerate}
\end{proposition}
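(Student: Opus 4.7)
I would prove the equivalence by handling the two implications separately.

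For $(2) \Rightarrow (1)$, my plan is to apply (2) twice in carefully chosen special cases. Taking $k = n = d$ and $\overline{D}_d := \overline{D}$ identifies the left-hand side with $\langle\overline{D}^{\cdot(d+1)}\rangle = \avol(\overline{D})$ by Proposition~\ref{prop:avollogconcave}(1), yielding $\avol(\overline{D}) = \langle\overline{D}\rangle\overline{P}^{\cdot d}$. Taking instead $k = n = 1$ and $\overline{D}_1 = \cdots = \overline{D}_d := \overline{P}$, and invoking Remark~\ref{rem:aposinthomo}(3) to commute the big-nef divisor $\overline{P}$ out of the positive-intersection bracket, both sides collapse to $\langle\overline{D}\rangle\overline{P}^{\cdot d} = \langle\overline{P}\rangle\overline{P}^{\cdot d} = \avol(\overline{P})$ (the last equality because for any approximation $(\varphi,\overline{M}_P) \in \widehat{\Theta}(\overline{P})$ the pseudo-effectivity of $\varphi^*\overline{P}-\overline{M}_P$ against the nef $\varphi^*\overline{P}^{\cdot d}$ forces the supremum to be attained at the trivial approximation). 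Combining the two displays gives $\avol(\overline{D}) = \avol(\overline{P})$.

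For $(1) \Rightarrow (2)$, the inequality $\geq$ is immediate: since $\overline{N}\geq 0$, the pair $(\mathrm{id}_X,\overline{P})$ lies in $\widehat{\Theta}(\overline{D})$, so plugging it into the supremum defining the left-hand positive intersection and also taking suprema over admissible approximations of the $\overline{D}_i$'s already recovers the right-hand side. The real work is the reverse inequality, and my first step would be the vanishing $\adeg(\overline{P}^{\cdot d}\cdot\overline{N}) = 0$: on $[0,1]$ the function $t\mapsto\avol(\overline{P}+t\overline{N})$ is both non-decreasing in the effective order and concave via Yuan's Brunn-Minkowski (concavity of $\avol^{1/(d+1)}$), and by (1) its endpoint values coincide, so it is constant; differentiating at $t = 0$ using Theorem~\ref{thm:diffavol} and the nefness of $\overline{P}$ produces the vanishing.

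To close the reverse inequality it suffices to establish, for every admissible approximation $(\varphi,\overline{M})$ of $\overline{D}$ and simultaneous approximations $(\varphi,\overline{M}_i)$ of the $\overline{D}_i$'s on a common blow-up $X' \to X$, the pointwise bound
\[
 \adeg(\overline{M}^{\cdot k}\cdot\overline{M}_k\cdots\overline{M}_n\cdot\varphi^*\overline{D}_{n+1}\cdots\varphi^*\overline{D}_d) \leq \adeg(\varphi^*\overline{P}^{\cdot k}\cdot\overline{M}_k\cdots\overline{M}_n\cdot\varphi^*\overline{D}_{n+1}\cdots\varphi^*\overline{D}_d),
\]
after which taking suprema closes (2). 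My plan is to iterate Theorem~\ref{thm:aKT}(1) in the rearranged upper-bound form
\[
 \adeg(\overline{M}^{\cdot 2}\cdot\overline{Y})\cdot\adeg(\varphi^*\overline{P}^{\cdot 2}\cdot\overline{Y}) \leq \adeg(\overline{M}\cdot\varphi^*\overline{P}\cdot\overline{Y})^2
\]
for a product $\overline{Y}$ of $d-1$ nef divisors, seeded by $\adeg(\overline{M}\cdot\varphi^*\overline{P}^{\cdot d}) \leq \adeg(\varphi^*\overline{D}\cdot\varphi^*\overline{P}^{\cdot d}) = \avol(\overline{P})$ where the final equality is precisely the vanishing established in the previous paragraph. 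The hard part, and the main obstacle, is running this bootstrap when the fixed nef slots are populated by the mixed collection $\overline{M}_k,\ldots,\overline{M}_n,\varphi^*\overline{D}_{n+1},\ldots,\varphi^*\overline{D}_d$ rather than only $\varphi^*\overline{P}$: the bootstrap then demands mixed vanishings of the form $\adeg(\varphi^*\overline{N}\cdot\overline{M}_k\cdots\overline{M}_n\cdot\varphi^*\overline{D}_{n+1}\cdots\varphi^*\overline{D}_d\cdot\varphi^*\overline{P}^{\cdot k-1}) = 0$, which are genuinely stronger than the single seed. Extracting these is expected to require the arithmetic Hodge index theorem Theorem~\ref{thm:aHodge} together with the pseudo-effectivity/principality criterion Theorem~\ref{thm:psef}(2), applied on a blow-up where $\overline{P}$ is perturbed slightly to become ample.
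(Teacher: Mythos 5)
Your $(2)\Rightarrow(1)$ argument is correct and matches the paper's one-line observation in substance: you first use $k=n=d$, $\overline{D}_d=\overline{D}$ to get $\avol(\overline{D})=\langle\overline{D}\rangle\overline{P}^{\cdot d}$, and then $k=n=1$ with all $\overline{D}_i=\overline{P}$ together with Remark~\ref{rem:aposinthomo}(3) and the identity $\langle\overline{P}\rangle\overline{P}^{\cdot d}=\avol(\overline{P})$ (which indeed follows from Lemma~\ref{lem:aint2}(3) applied to any $(\varphi;\overline{M})\in\widehat{\Theta}(\overline{P})$). Your proof of the seed vanishing $\adeg(\overline{P}^{\cdot d}\cdot\overline{N})=0$ via Brunn-Minkowski concavity, monotonicity, and Theorem~\ref{thm:diffavol} is also a valid (and pleasant) alternative to what the paper implicitly absorbs into Corollary~\ref{cor:asymorthoapos}.

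The genuine gap is in the bootstrap you propose for $(1)\Rightarrow(2)$. You acknowledge that closing the reverse inequality would require the \emph{mixed} vanishings
\[
 \adeg\bigl(\varphi^*\overline{N}\cdot\overline{M}_k\cdots\overline{M}_n\cdot\varphi^*\overline{D}_{n+1}\cdots\varphi^*\overline{D}_d\cdot\varphi^*\overline{P}^{\cdot(k-1)}\bigr)=0,
\]
but these are simply \emph{false} in general, so no amount of Hodge-index or pseudo-effectivity machinery will rescue the iteration. The seed $\adeg(\overline{P}^{\cdot d}\cdot\overline{N})=0$ is an orthogonality statement that depends on every nef slot being the specific divisor $\overline{P}$; once you swap even one $\varphi^*\overline{P}$ for an ample approximation $\overline{M}_i$, Lemma~\ref{lem:equalitycond}(1) shows that vanishing against $d$ ample slots would force $\overline{N}=0$, which is certainly not the case in a nontrivial Zariski decomposition. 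So the Khovanskii-Teissier bootstrap cannot propagate the orthogonality from the $\overline{P}$-only configuration to the mixed ones, and your argument as written does not establish the reverse inequality.

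The paper takes a different and essentially order-theoretic route for $(1)\Rightarrow(2)$. After replacing $\overline{N}$ by an effective $\QQ$-divisor $\overline{N}'\leq\overline{N}$ close enough that the positive intersection changes by at most $\varepsilon$ (which also preserves $\avol(\overline{P})=\avol(\overline{D}')$ for $\overline{D}':=\overline{P}+\overline{N}'$), one picks an admissible approximation $(\varphi;\overline{M})\in\widehat{\Theta}_{\rm ad}(\overline{D}')$ nearly computing $\langle{\overline{D}'}^{\cdot k}\cdot\cdots\rangle\cdots$. Since both $(\mathrm{id};\overline{P})$ and $(\varphi;\overline{M})$ are admissible approximations of $\overline{D}'$, the filteredness established in Proposition~\ref{prop:defaposint2} gives a common refinement $(\psi;\overline{Q})$ with $\overline{Q}\geq\psi^*\overline{P}$ and $\overline{Q}\geq$ the pullback of $\overline{M}$. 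But then $\avol(\overline{P})=\avol(\overline{D}')\geq\avol(\overline{Q})\geq\avol(\overline{P})$, and Corollary~\ref{cor:equalnefbig}(2) (one of the main consequences of Theorem~\ref{thm:strictconcave}) forces $\overline{Q}=\psi^*\overline{P}$. Hence the pullback of $\overline{M}$ is dominated by $\psi^*\overline{P}$, and Lemma~\ref{lem:aint2}(3) yields the pointwise bound you were after, directly against $\overline{P}$, with no mixed vanishings needed. The decisive ingredients you are missing are the filteredness of $\widehat{\Theta}_{\rm ad}$ and Corollary~\ref{cor:equalnefbig}; these replace your Khovanskii-Teissier bootstrap entirely.
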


\begin{proof}
(2) $\Rightarrow$ (1) is clear since $\avol(\overline{D})=\langle\overline{D}^{\cdot (d+1)}\rangle=\avol(\overline{P})$.

(1) $\Rightarrow$ (2): We may assume that $\overline{D}_{n+1},\dots,\overline{D}_d$ are all nef.
The inequality
\[
 \langle\overline{D}^{\cdot k}\cdot\overline{D}_k\cdots\overline{D}_n\rangle\overline{D}_{n+1}\cdots\overline{D}_d\geq\langle\overline{D}_k\cdots\overline{D}_n\rangle\overline{P}^{\cdot k}\cdot\overline{D}_{n+1}\cdots\overline{D}_d
\]
is clear.
By blowing up the irreducible components of $\Supp(N)$, we can assume that $\overline{N}=a_1\overline{N}_1+\cdots+a_l\overline{N}_l$, where $a_1,\dots,a_l\in\RR_{>0}$ and $\overline{N}_1,\dots,\overline{N}_l$ are effective arithmetic divisors (see \cite[Proposition~2.4.2]{Moriwaki12a} for the existence of a decomposition of $g_{\overline{N}}$).
Let $\varepsilon>0$.
First, we choose an effective arithmetic $\QQ$-divisor $\overline{N}'$ such that $\overline{N}'\leq\overline{N}$ and
\begin{equation}\label{eqn:zariski1}
 \langle(\overline{P}+\overline{N}')^{\cdot k}\cdot\overline{D}_k\cdots\overline{D}_n\rangle\overline{D}_{n+1}\cdots\overline{D}_d+\varepsilon\geq\langle\overline{D}^{\cdot k}\cdot\overline{D}_k\cdots\overline{D}_n\rangle\overline{D}_{n+1}\cdots\overline{D}_d.
\end{equation}
We set $\overline{D}':=\overline{P}+\overline{N}'$.
Since $\overline{P}\leq\overline{D}'\leq\overline{D}$, we have $\avol(\overline{D}')=\avol(\overline{P})$.
Next, we choose $(\varphi;\overline{M})\in\widehat{\Theta}_{\rm ad}(\overline{D}')$ such that
\begin{equation}\label{eqn:zariski2}
 \langle\varphi^*\overline{D}_k\cdots\varphi^*\overline{D}_n\rangle\overline{M}^{\cdot k}\cdot\varphi^*\overline{D}_{n+1}\cdots\varphi^*\overline{D}_d+\varepsilon\geq\langle{\overline{D}'}^{\cdot k}\cdot\overline{D}_k\cdots\overline{D}_n\rangle\overline{D}_{n+1}\cdots\overline{D}_d.
\end{equation}
Since $\overline{D}'=\overline{P}+\overline{N}'$ and $\varphi^*\overline{D}'=\overline{M}+(\varphi^*\overline{D}'-\overline{M})$ are admissible approximations of $\overline{D}'$, there exists an admissible approximation $(\psi;\overline{Q})$ of $\overline{D}'$ such that $(\varphi;\overline{M})\leq(\psi;\overline{Q})$ and $(\varphi;\varphi^*\overline{P})\leq(\psi;\overline{Q})$.
Since $\psi^*\overline{P}\leq\overline{Q}$ and $\avol(\overline{P})=\avol(\overline{Q})=\avol(\overline{D}')$, we have $\psi^*\overline{P}=\overline{Q}$ by Corollary~\ref{cor:equalnefbig}.
Thus, by Lemma~\ref{lem:aint2} (3), we have
\begin{equation}\label{eqn:zariski3}
 \langle\overline{D}_k\cdots\overline{D}_n\rangle\overline{P}^{\cdot k}\cdot\overline{D}_{n+1}\cdots\overline{D}_d\geq\langle\varphi^*\overline{D}_k\cdots\varphi^*\overline{D}_n\rangle\overline{M}^{\cdot k}\cdot\varphi^*\overline{D}_{n+1}\cdots\varphi^*\overline{D}_d.
\end{equation}
By (\ref{eqn:zariski1}), (\ref{eqn:zariski2}), and (\ref{eqn:zariski3}), we have
\[
 \langle\overline{D}_k\cdots\overline{D}_n\rangle\overline{P}^{\cdot k}\cdot\overline{D}_{n+1}\cdots\overline{D}_d+2\varepsilon\geq\langle\overline{D}^{\cdot k}\cdot\overline{D}_k\cdots\overline{D}_n\rangle\overline{D}_{n+1}\cdots\overline{D}_d
\]
for every $\varepsilon>0$.
Hence we conclude the proof.
\end{proof}

%%%
\section*{Acknowledgments}

The author is greatly indebted to Professors Boucksom and Huayi Chen for stimulating conversations and for their papers \cite{Bou_Fav_Mat06} and \cite{Chen11}.
The author is grateful to Professor Maillot and the Institut de Math\'ematiques de Jussieu for their hospitality.
A part of this work was done in the institute.
The author is grateful to the referees for careful reading and useful suggestions.

%%%
\bibliography{ikoma}
\bibliographystyle{plain}

\end{document}